\title{Modulation equation and SPDEs on unbounded domains}
\author{Luigi Amedeo Bianchi\thanks{Technische Universität Berlin, \emph{Current affiliation:} Università degli Studi di Trento \href{mailto:luigiamedeo.bianchi@unitn.it}{luigiamedeo.bianchi@unitn.it}}  \and
Dirk Bl\"omker\thanks{Universit\"at Augsburg \href{mailto:dirk.bloemker@math.uni-augsburg.de}{dirk.bloemker@math.uni-augsburg.de}} 
\and 
Guido Schneider\thanks{Universit\"at Stuttgart  \href{mailto:guido.schneider@mathematik.uni-stuttgart.de}{guido.schneider@mathematik.uni-stuttgart.de}}
}
\date{\today}
\newcommand{\formereta}{\varsigma}
\newcommand{\e}{{\mathrm{e}}}
\newcommand{\cL}{{\mathcal{L}}}
 \newcommand{\R}{\mathbb{R}}
 \newcommand{\N}{\mathbb{N}}
\newtheorem{theorem}{Theorem}[section]
\newtheorem{corollary}[theorem]{Corollary}
\newtheorem{definition}[theorem]{Definition}
\newtheorem{lemma}[theorem]{Lemma}
\newtheorem{remark}[theorem]{Remark}
\begin{document}

\maketitle

\begin{abstract}
 We consider the approximation via modulation equations for nonlinear SPDEs on unbounded domains with additive space time white noise.
 Close to a bifurcation an infinite band of eigenvalues changes stability, and we study the impact of small space-time white noise on the dynamics close to this bifurcation. 
 
 As a first example we study the stochastic Swift-Hohenberg equation on the whole real line.
 Here due to the weak regularity of solutions the standard methods for modulation equations fail, and we need to develop new tools to treat the approximation.
 
As an additional result we sketch the proof for local existence and uniqueness of solutions for the stochastic Swift-Hohenberg 
 and the complex Ginzburg Landau equations on the whole real line in weighted spaces that allow for unboundedness at infinity of solutions,
 which is natural for translation invariant noise like space-time white noise.
 We use energy estimates to show that solutions of the Ginzburg-Landau equation are H\"older continuous 
 and have moments in those functions spaces. 
This gives just enough regularity to proceed with the error estimates of the approximation result. 
\end{abstract}

\section{Introduction}
We consider the stochastic Swift-Hohenberg equation on the whole real line.
This is one of the prototypes of pattern forming equations and its first instability is supposed 
to be a toy model for the convective instability in Rayleigh-B\'enard convection.
It is given by 
\begin{equation}
 \label{e:SHintro}
 \partial_t u = -(1+\partial_x^2)^2 u +\nu\varepsilon^2 u -u^3 + \sigma \varepsilon^{\sfrac{3}{2}} \partial_t W
\end{equation}
with space-time white noise $\partial_t W$. 
Here $\nu\in\R$ measures the distance from bifurcation, which scales with $\varepsilon^2$  
and $\sigma\geqslant0$ measures the noise strength that scales with $\varepsilon^{3/2}$, 
for a small $0\leqslant  \varepsilon\ll 1$.
We will see later that the scaling is in such a way that close to the bifurcation both terms have an impact on the dynamics. 

Due to the presence of the noise we run into several problems. 
First, solutions have very poor regularity properties and solutions are at most H\"older continuous. 
Thus we need to consider weaker concepts of solutions like the mild formulation of the equation. 
Moreover, due to translation invariance of the noise solutions are in general immediately unbounded in space,
and we need to work in spaces that do allow for growth of solutions for $|x|\to\infty$.
These weighted spaces are not closed under pointwise multiplication, which is a serious problem in the construction of solutions
due to the cubic nonlinearity.

Our main results show that 
close to the change of stability, i.e.~for small $\varepsilon>0$, solutions of \eqref{e:SHintro} are well approximated 
by a modulated wave\footnote{$c.c.$ denotes the complex conjugate, so that $A+c.c.$ is twice the real part of $A$ }
\begin{equation*}
u(t,x) \approx u_A (t,x) :=  \varepsilon A(\varepsilon^2 t,\varepsilon x) e^{ix} + c.c. ,
\end{equation*}
where the amplitude $A$ solves a so called modulation or amplitude equation, which is in our case a stochastic complex-valued Ginzburg-Landau equation
\[
\partial_T A = 4 \partial^2_X A +\nu A -3A|A|^2 + \partial_T \mathcal{W} 
\]
for some complex-valued space-time white noise $\partial_T \mathcal{W}$ given by the formal derivative of a complex-valued standard cylindrical Wiener process $\mathcal{W}$.
In order to obtain a strong error estimate,
$\mathcal{W}$ is constructed by taking a transformation of the Fourier transform of $W$ around the point $1$ in Fourier space. 
See \cite{BB:15} for the precise details.
\subsection{Modulation equations for deterministic PDEs}
The Ginzburg-Landau equation as an effective amplitude equation for the 
description of  pattern forming systems close to the first instability 
has first been derived in the 1960s by Newell and Whitehead, cf.~\cite{NW69}.
The mathematical justification of this approach beyond pure formal calculations 
has been done by Mielke and Melbourne together with coauthors 
either with the help of a Lyapunov-Schmidt reduction (see \cite{Mi92,Mel98,Mel00}),
or with the construction of special solutions, cf.~\cite{IMD89}. 
Approximation results showing that there are solutions of the pattern-forming system 
which behave as predicted by the Ginzburg-Landau equation has been shown by various authors for instance in 
\cite{CE90,vH91,KSM92,Schn94a,Schn94c,Takac96}. 
Moreover, there are attractivity results by Eckhaus \cite{Eck93} and Schneider \cite{Schn95}, 
showing that every small solution can be described after a certain time by the Ginzburg-Landau equation. 
Various results followed in subsequent years: combining the approximation and attractivity results allows to prove the upper semi-continuity of attractors~\cite{MS:95,Schn99c}, 
shadowing by pseudo-orbits, and global existence results for the pattern-forming systems \cite{Schn94b,Schn99b}.
A number of approximation theorems have been proven in slightly modified situations, such as  the
degenerated case of a vanishing cubic coefficient \cite{BS07}, the Turing-Hopf case description
by mean field coupled Ginzburg-Landau equations  \cite{Schn97}, 
the Hopf bifurcation at the Fourier wave number k = 0 \cite{Schn98}, and 
the time-periodic situation  \cite{SU07}. 
Recently, such results have been established in  case of pattern forming 
systems with conservation laws, too, cf. \cite{HSZ11,SZ13,DSSZ16}.
Let us finally point out that this section is just a brief summary of those of the numerous deterministic results existing in the literature which are most closely related to the one presented here.
\subsection{SPDEs in weighted spaces on unbounded domains}
The theory of higher order parabolic stochastic partial differential equations (SPDEs) 
on unbounded domains with translation invariant additive noise 
like space-time white noise is not that intensively studied in recent years.
while for the wave equation with multiplicative noise there are many recent publications. 
See for example~\cite{Kho14, DaSS09, Dal09}.

First publications for parabolic equations are \cite{Peszat1,Peszat2} but with exponential weights 
and  \cite{Iwata} without using any weights in a distributional space. While \cite{Peszat1} 
can cover the case of real valued Ginzburg-Landau, Swift-Hohenberg is not covered.  

In many publications often only noise with a spatial cut off or a decay condition at infinity is treated, 
as for example by Eckmann and Hairer~\cite{EckHai2001}, where the cutoff is in real and in Fourier space, or by Funaki~\cite{Fun1995}.
Furthermore, Rougemont \cite{Ro02} studied the stochastic Ginzburg-Landau using 
exponentially weighted spaces and relatively simple noise that is white in time, but bounded in space.

In many examples, using trace class noise implies $L^2$-valued Wiener processes 
and thus a decay condition both of solutions and of the noise at infinity. 
This leads to $L^2$-valued solutions, as for example 
by Brzezniak and Li~\cite{BreLi2006} or by Kr\"uger and Stannat~\cite{KruSta2014}, where an integral equation is considered.
 In the next paragraph we will comment on the fact that a decay at infinity rules out the effect we want to study here 
 using modulation equations.

The stochastic Ginzburg-Landau Equation in a weighted $L^2$-space was already studied by Bl\"omker and Han \cite{BH:10}. 
The existence and uniqueness result based on a Galerkin Approximation is briefly sketched there 
and the asymptotic compactness of the stochastic dynamical system is shown.

Recently, several publications treat SPDEs with space-time-white noise in weighted Besov spaces: 
see for example R\"ockner, Zhu, and Zhu
\cite{RoeZhZh:P15} or Mourrat and Weber~\cite{MoWe:P15}. 
They work with the two-dimensional $\Phi^4$-model, which is similar to Ginzburg-Landau and 
where renormalization is needed to give a meaning to the two-dimensional equation with this choice of noise.
In order to construct solutions they consider approximations on the torus and then send the size of the domain to infinity,
which is the method also used in this paper. But the authors work directly in weighted Besov spaces, 
while we show our existence and uniqueness result in spaces with less regularity. 
Moreover the result in Besov spaces relies heavily on properties of the heat-semigroup, 
which do not seem to hold for fourth order operators like the Swift-Hohenberg operator.
For example we will see later, that the operator is not dissipative in weighted $L^p$-spaces, while the Laplacian is.
Thus we cannot derive useful a priori bounds for Swift-Hohenberg in $L^p$-spaces.
This is also the reason that our final approximation result is only valid in a weighted $L^2$-space, 
while the residual is bounded in spaces with H\"older regularity. 

Let us finally remark that spaces without weight like $L^2(\R)$ and the usual Sobolev spaces do not include  constant functions and 
modulated pattern, that do appear close to the bifurcation, and which we want to study here using modulation equations. 
In order to include these special solutions one needs to consider  weighted spaces, see for example~\cite{BLW:13} or~\cite{BBY:16}
for publications treating random attractors.

 \subsection{Modulation equations for SPDEs}

In Bl\"omker, Hairer and Pavliotis \cite{BHP:05}, modulation equations for SPDEs on large domains were treated.
The results are quite similar to the ones presented here, but they hold only on large domains of size proportional to $1/\varepsilon$ for Swift-Hohenberg 
and thus the Ginzburg-Landau equation is posed on a domain of order $1$. 
The main advantage is that one can still work with Fourier series, 
and only finitely many modes change stability at the bifurcation. 
Moreover, solutions of the amplitude equation are not unbounded in space and there is no need to consider weighted spaces.  
The drawback is that various constants depend on the size of the domain and the results do not extend to unbounded domains.

The first results for modulation equations for Swift-Hohenberg on the whole real line were presented by  Klepel, Mohammed, and Bl\"omker 
\cite{MBK:13,KBM:13}. Here the authors used spatially constant noise of a  strength of order $\varepsilon$, which is stronger than the one treated here.
Although the  noise does not appear 
directly in the amplitude equation, due to nonlinear interaction and averaging 
additional deterministic terms appear in the Ginzburg-Landau equation. 
Due to the spatial regularity of the noise, the main advantage  is that one can work in spaces with much more spatial 
regularity than we have to use here.
As a consequence, solutions are still bounded in space and do not grow towards infinity at $|x|\to\infty$.

The key result towards a full result for amplitude equations 
on the whole real line with space-time  white noise 
is by Bianchi and Bl\"omker~\cite{BB:15}. Here the full approximation result  for linear SPDEs, namely the Swift-Hohenberg and Ginzburg-Landau equations without cubic terms,
is established. 
This is very useful in the results presented here, as we use it to approximate 
the stochastic convolutions in the mild formulation.

Let us finally remark that a decay at infinity of the noise and thus of the solution leads to a completely different result.
Under the rescaling in space used to obtain the modulation equation,
we conjecture to finally obtain a point forcing at the origin in the Ginzburg-Landau equation, 
which is an interesting question in itself.


\subsection{Outline of the paper and main results}

 
In Section \ref{sec:set} we introduce basic notation and especially the weighted spaces we are going to work in.
The weights in these spaces should decay polynomially like 
$|x|^\rho$
for large |$x|$. 

The existence and uniqueness of solutions to the Swift-Hohenberg and the Ginzburg-Landau equations is 
briefly sketched in Section~\ref{sec:exuni} and again at the beginning of Section~\ref{sec:reg}.
The results are straightforward and stated in Theorem~\ref{thm:exSH} and~\ref{thm:exGL},  
for the Swift-Hohenberg and the Ginzburg-Landau equations respectively.

For the proof we use the approximation by finite domains with periodic boundary conditions, and obtain existence in energy type spaces, for example for Swift-Hohenberg in 
\[
L^\infty(0,T,L^2_\varrho) \cap L^2(0,T,H^2_\varrho) \cap L^4(0,T,L^4_\varrho).
\]
We can also obtain weak continuity in time, but we do not need it in our approach.

A key technical point is the regularity result of Corollary \ref{cor:maxregA} in Section \ref{sec:reg}, which roughly states:
\begin{theorem}[Regularity]
For sufficiently smooth initial conditions and all $\eta\in(0,1/2)$
\[ 
  A \in L^\infty(0,T,C^{0,\eta}_\kappa ) \quad \text{for all sufficiently  small }\kappa>0\;.
\]
\end{theorem}
Moreover, we also obtain finite moments of these norms.

The crucial point for the following analysis is that the solution of the amplitude equation 
is H\"older up to exponent almost $1/2$ in space with a polynomial weight $\kappa>0$ that is arbitrarily weak. 
We could also show continuity in time, but we do not need this in the estimates later.

The main idea in the proof is to introduce the standard transformation $B=A-\mathcal{Z}$ 
using the Ornstein-Uhlenbeck (OU) process $\mathcal{Z}$ that solves the stochastic linear Ginzburg-Landau equation and is thus Gaussian. 
The H\"older regularity of $\mathcal{Z}$ is a well known result (see Lemma~\ref{lem:regZ}).
The key idea of the transformation is that $B$ solves a random PDE and we can apply energy type estimates in $L^p$- and $W^{1,p}$-spaces for any $p\geq2$, 
which show that $B$ is more regular than $\mathcal{Z}$ and thus $A$ is as regular as $\mathcal{Z}$.

Let us remark that these $L^p$-estimates are not available for the Swift-Hohenberg equation,  where we only have $L^2$- or $H^1$-estimates.
Thus we do not know how to establish via energy estimates higher regularity for solutions in that case. Moreover bootstrapping in the mild formulation is nontrivial as the cubic is unbounded in the spaces we consider, and the semigroup is not regularising the loss in the weight. As we do not need higher order regularity in our approach, we do not follow this path here.

The main approximation result for the amplitude equation is Theorem \ref{thm:res} in Section \ref{sec:res},  
where we bound the residual defined in \eqref{e:residual} and we have the approximation behaving uniformly for times up to order $\varepsilon^{-2}$ in a weighted $C^0$-space. 

\begin{theorem}[Residual]
Let $A$ be a solution of the amplitude equation \eqref{e:GL} with suitable space-time white noise $\partial_T \mathcal{W}$ and regularity given by the previous theorem.
Then for the approximation $u_A$ defined by the modulated wave  (see \eqref{def:uA}) for all small $\kappa>0$ that
\begin{equation}
\| \text{\rm Res}(u_A)( \varepsilon^{-2}\;\cdot)\|_{C^0_\kappa} =\mathcal{O}(\varepsilon^{\frac32 - 2\kappa }) \;.
\end{equation}
\end{theorem}

In the final Section \ref{sec:app} we carry over the bound for the residual to bounds on the error using standard energy type estimates in a weighted $L^2$-space. 
For details see Theorem \ref{thm:final}, here only roughly stated.

\begin{theorem}[Approximation]
Let $A$ be a solution of the amplitude equation \eqref{e:GL} with suitable space-time white noise $\partial_T \mathcal{W}$ and sufficient regularity given by the theorem above.
For   the modulated wave $u_A$ and any solution  $u$ to the Swift Hohenberg equation~\eqref{e:SH} with initial condition close to $u_A(0)$
we obtain for all $\delta>0$
	\[
		\sup_{[0,T_0\varepsilon^{-2}]}\|u-u_A\|_{L^2_{\varrho,\varepsilon}} =\mathcal{O}(\varepsilon^{1-2\delta})
	\]
\end{theorem}
Here the weight is $\varepsilon$-dependent to reflect the rescaling in space. Note that with this rescaling we have $\|u_A\|_{L^2_{\varrho,\varepsilon}} = \mathcal{O}(\varepsilon^{1/2})$.

\subsection{Outlook}
In the present paper we focus only on the one-dimensional stochastic Swift-Hohenberg equation with cubic nonlinearity forced by space-time white noise. Let us comment here briefly 
on possible extensions of the result.

One key feature of the nonlinearity is the property that is maps the dominant Fourier modes around wave-number $\pm1$ again onto these wave-numbers. 
This yields the replacement of $-u^3$ in Swift-Hohenberg with the corresponding amplitude term $-3A|A|^2$.
Moreover, due to the nonlinear stability of the cubic nonlinearity, we obtained additional regularity.
We believe that other nonlinearities with this property should yield similar results. 

Completely different is the case of {\em{quadratic nonlinearities}}, for instance in an interesting variant of the Swift-Hohenberg equation, but also in convection problems like Rayleigh-Benard.
Although this is solved in the deterministic case, in the stochastic case this is still an open problem. 
For Swift-Hohenberg with quadratic nonlinearity in the approximation
one has to take into account the Fourier modes around wave-number $\pm2$  due to the nonlinear interaction of Fourier-modes. 

Here we focus on  space-time white noise, as 
the estimates for the stochastic convolution term were only done for this noise  in~\cite{BB:15}.
Since we rely on that result for the current one, the approximation proven in this paper is only for space-time white noise.
However, any improvement in the result for the stochastic convolution immediately extends to the result presented in this paper.

The case of {\em{coloured noise}} thus seems to be quite lengthy but straightforward. It restricts to the results in~\cite{BB:15}
and only requires a little attention in the spaces involved. In the limit, due to rescaling of the noise, we expect space-time white noise in the 
amplitude equation at least for suitably smooth and localized spatial correlation function of the noise.
This is going to be dealt with in full detail in an upcoming publication.

Let us finally remark that results beyond one spatial dimension are highly non trivial.
For standard Swift-Hohenberg for example in 2D the whole ring is unstable in Fourier space and any Ginzburg-Landau description is doomed to fail. Even if we add symmetry breaking terms to the equation, such as $\partial_y^2 u$, we would obtain a Ginzburg-Landau approximation, but this would be two dimensional, and we only obtain solutions  
to the stochastic Ginzburg-Landau equation in one spatial dimension. Higher dimension would require renormalization of the equation. 


 \section{Setting}
 \label{sec:set}
 

 Consider the stochastic Swift-Hohenberg equation
 \begin{equation}
  \label{e:SH}
  \partial_t u = \cL_\nu u -u^3 +\sigma \varepsilon^{\sfrac{3}{2}} \partial_t W\;, \quad u(0)=u_0 
 \end{equation}

on $\R$ with a standard cylindrical Wiener process $W$ in $L^2(\R)$,
which means that  $\partial_t W$ is space-time white noise, and the operator $\cL_\nu= -(1+\partial_x^2)^2 + \nu\varepsilon^2 $, where $\sigma$ and $\nu$ are constants.
In the following, we will also consider  $\cL_0= -(1+\partial_x^2)^2 $.

Due to lack of regularity \eqref{e:SH} is not defined pointwise, but only in a weak PDE sense.
In order to give a rigorous meaning, we use the standard transformation to a random PDE.
We define the stochastic convolution
\[
Z(t)= \varepsilon^{\sfrac{3}{2}} W_{\cL_\nu}(t) = \varepsilon^{\sfrac{3}{2}}\int_0^t e^{(t-s)\cL_\nu}dW(s)\;.
\]

We will see later that $Z$ is for any $\varrho>0$ in the weighted space $C^0_\varrho$ of continuous functions defined in the next section in  \eqref{e:C0rho}.
Note that $\mathcal{Z}$ is the OU-process corresponding to the Ginzburg-Landau equation.

In order to give a meaning to~\eqref{e:SH} we define $v=u-Z$ and consider weak solutions (in the PDE sense) of  
\begin{equation}
   \label{e:SHt}
     \partial_t v= \cL_\nu v - (v+Z)^3\;, \quad v(0)=u_0\;,
\end{equation}
as in the following definition.

\begin{definition}[Weak solution]
\label{def:weak}
We call an $L^3_{\text{loc}}(\mathbb{R})$-valued  stochastic process $v$ with integrable trajectories a weak solution of  \eqref{e:SHt} 
if for all smooth and compactly supported functions $\varphi$ one has with probability $1$ that for all $t\in[0,T]$
\[
\int_\R v(t)\varphi dx = \int_\R u_0\varphi dx +   \int_0^t\int_\R v(s)  \cL_\nu\varphi dx  -  \int_0^t\int_\R(v(s)+Z(s))^3\varphi dx  \;.  
\] 
\end{definition}

\begin{remark}
A sufficiently regular weak solution of~\eqref{e:SHt} is a mild solution of~\eqref{e:SHt}
given by the integral equation
\[
v(t)=e^{t\cL_\nu}v(0) - \int_0^t e^{(t-s)\cL_\nu}(v(s)+Z(s))^3 ds
\]
which is under the substitution $v=u-Z$ a mild solution of~\eqref{e:SH}. 
This concept is also known as Duhamel's formula or variation of constants.
The equivalence of mild and weak solutions under the assumption of relatively weak regularity can be 
found  for example in~\cite{paz83,lun95}. 
The transfer to our situation is straightforward. 
\end{remark}

\begin{remark}
 The main problem of mild solutions for existence and uniqueness of solutions is the unboundedness of the nonlinear operator, so we cannot use the direct fixed point argument for such solutions.
 For example in weighted spaces  $C^0_\varrho$ of continuous functions with weights decaying to $0$ at infinity,
 which are defined in the next section, the cubic nonlinearity is an unbounded operator on $C^0_\varrho$, as it maps
 $C^0_\varrho$ to~$C^0_{3\varrho}$. We always have to cube the weight, too.
 Thus one can show that the right hand side of the mild formulation can not be a contraction in $C^0_\varrho$.

 Similar problems appear for other weighted spaces, as solutions are allowed to be unbounded in space.
 Thus  later in the paper we use the weak formulation to prove existence and uniqueness, 
 and then the mild formulation to verify error estimates.
\end{remark}


\subsection{Spaces}


For $\varrho\in\R$, denote by $C^0_\varrho$ the space of continuous functions $v: \R \to \R$ 
such that the following  norm is finite
\begin{equation}
 \label{e:C0rho}
 \|v\|_{C^0_\varrho} =  \sup_{x\in\mathbb{R}} \Big\{  \frac1{(1+x^2)^{\varrho/2}} |v(x)| \Big\} \;.
\end{equation}
This is a monotone increasing sequence of spaces of continuous functions with growth condition at $\pm\infty$ for $\varrho>0$.
See also Bianchi and Bl\"omker \cite{BB:15}. 

\begin{definition}[Weights]
 \label{def:weight}
 We define for $\varrho>0$ the weight function $w_\varrho(x)=  (1+x^2)^{-\varrho/2}$.
 We also define for $c>0$ the scaled weight function $w_{\varrho,c}(x) = w_\varrho(cx) = (1+c^2x^2)^{-\varrho/2}$.
\end{definition}
We have the following properties 
\begin{equation}\label{eq:propweight}
|w_\varrho'(x)| \leqslant  C w_\varrho(x)\;,
\quad 
|w_\varrho^{(n)}(x)| \leqslant  C_n w_\varrho(x)
\quad\text{and}\quad 
|w_{\varrho,c}^{(n)}(x)| \leqslant  C_n c^n w_{\varrho,c}(x)
\;.
\end{equation}
Moreover, $w_\varrho\in L^1(\R)$ if and only if $\varrho>1$.

Let us remark that we have the equivalence of norms  (see Lemma 2.1 of \cite{BB:15})
\begin{equation}
 \label{e:equivC0}
 c \|v\|_{C^0_\varrho} 
 \leqslant 
 \sup_{L\geqslant 1} \{ L^{-\varrho} \|v\|_{C^0([-L,L])}\}
 \leqslant  C
 \|v\|_{C^0_\varrho},
\end{equation}
with strictly positive constants $c$ and $C$.

We can also define  as in  Bates, Lu, Wang~\cite{BLW:13}
weighted spaces for integrable functions.
\[
L^p_\varrho = \{ u \in L^p_{\text{loc}}(\R) \ :\ uw_\varrho^{1/p} \in L^p(\R)\}
\]
with norm
\[
\|u\|_{L^p_\varrho} =  \Big(\int_\R w_\varrho(x)|u(x)|^p dx\Big)^{1/p} \;.
\]
Moreover, we need weighted Sobolev spaces $W^{k,p}_\varrho$ and $H^k_\varrho:=W^{k,2}_\varrho$ defined by the norm
\[
\|u\|_{W^{k,p}_\varrho} := \Big( \sum_{\ell=0}^k \|\partial_x^\ell u\|^p_{L^p_\varrho} \Big)^{1/p}\;. 
\]
As $1 \leqslant  L^{\varrho} w_\varrho(x)$ on $[-L,L]$, it is easy to check that,
\begin{equation}
 \label{e:Wkpbound}
 \sup_{L\geqslant 1} \{ L^{-\varrho/p} \|v\|_{W^{k,p}([-L,L])}\}
 \leqslant  C \|v\|_{W^{k,p}_\varrho}\;.
\end{equation}
In general this is not an equivalence of norms as the opposite inequality is not true.
Note finally that for $\varrho>1$ we have an integrable weight $w_\varrho\in L^1(\R)$ and thus 
by H\"older inequality for all $k\in\N$, $p\geqslant1$ and $\delta>0$ the embedding
\[
W^{k,p+\delta}_\varrho \subset W^{k,p}_\varrho\;. 
\]
Note that this is false for $\varrho<1$ which includes the case of no weight ($\varrho=0$).

We also 
define weighted H\"older spaces $C^{0,\eta}_\kappa$ of locally H\"older continuous functions such that the following norm is finite:
\begin{equation}
 \label{def:wHoe}
 \|A\|_{ C^{0,\eta}_\kappa} = \sup\{ L^{-\kappa}\|A\|_{C^{0,\eta}[-L,L]} \ : \ L>1\}\;.
\end{equation}
This is the natural space for solutions of the SPDE, as the stochastic convolution $Z$ 
will be in such spaces. See for example Lemma~\ref{lem:regZ} later.


\section{Existence and Uniqueness of solutions}
\label{sec:exuni}


Here in the presentation we mainly focus  on the Swift-Hohenberg equation and state later 
the analogous result for the Ginzburg-Landau equation without proof.

There are older results on SPDEs in weighted spaces like for example \cite{Peszat1,Peszat2} 
using exponential weights.
While for higher order differential operators both treat only globally Lipschitz nonlinearities, 
\cite{Peszat1} treats only second order differential operators, but can deal with Ginzburg-Landau equation.
But none of the results in the literature seem to fit to the case of Swift-Hohenberg.

Let us also remark that there is already the recent result of Mourrat and Weber \cite{MoWe:P15} for the two-dimensional real-valued Ginzburg-Landau 
(or Allen-Cahn) equation that is  similar from the technical point of view, although it is proven in Besov spaces.

The main result of this section is:

\begin{theorem}
 \label{thm:exSH} 
For all $u_0\in L^2_\varrho$ and $T>0$ with  $\varrho>3$ there is a stochastic process such that $\mathbb{P}$-almost surely 
\[
v\in L^\infty(0,T,L^2_\varrho) \cap L^2(0,T,H^2_\varrho) \cap L^4(0,T,L^4_\varrho) 
\]
and $v$ is a weak solution of \eqref{e:SHt} in the sense of Definition \ref{def:weak}. 
Moreover, for any other such weak solution $\tilde{v}$ we have
\[
\mathbb{P}\Big( \sup_{t\in[0,T]}\|v(t)- \tilde{v}(t)\|_{L^2_\varrho}=0 \Big) =1\;.
\]
\end{theorem}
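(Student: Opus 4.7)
The plan is to construct $v$ as a limit of periodic approximations on $[-L,L]$ as $L\to\infty$, together with weighted energy estimates that are uniform in $L$, and then to exploit the favorable sign of the cubic for uniqueness directly in $L^2_\varrho$. For each $L$ I would fix a cylindrical Wiener process $W^L$ on $L^2$ of the torus $[-L,L]$ coupled to $W$ in the spirit of \cite{BB:15}, denote by $Z^L$ the corresponding stochastic convolution associated with $\cL_\nu$, and consider the analogue of \eqref{e:SHt} with $Z$ replaced by $Z^L$ on the torus. There $\cL_\nu$ generates an analytic semigroup, the cubic nonlinearity is coercive and one-sided Lipschitz on balls, and a standard monotonicity or Galerkin construction yields a unique solution $v^L\in C([0,T];L^2)\cap L^2(0,T;H^2)\cap L^4(0,T;L^4)$ on $[-L,L]$.

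Next I would derive $L$-uniform bounds in the weighted norms by testing against $v^L w_\varrho$, with the weight of Definition~\ref{def:weight}. The linear part, after integration by parts and use of the pointwise bounds $|w_\varrho^{(n)}|\leqslant C_n w_\varrho$ from \eqref{eq:propweight} together with Young's inequality to absorb commutator errors, satisfies
\[
\int v^L \cL_\nu v^L\, w_\varrho\, dx \;\leqslant\; -c\|v^L\|_{H^2_\varrho}^2 + C\|v^L\|_{L^2_\varrho}^2.
\]
Expanding $(v^L+Z^L)^3$ produces the coercive term $-\|v^L\|_{L^4_\varrho}^4$ from the pure $v^L$ power; the three mixed terms are each absorbed by Young's inequality into a small multiple of $\|v^L\|_{L^4_\varrho}^4$ at the cost of constants times $\int |Z^L|^4 w_\varrho\, dx$ and related integrals. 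The latter are finite almost surely because stationarity of $Z$ in $x$ yields only logarithmic-type spatial growth, and the assumption $\varrho>3$ provides ample room to integrate powers of $|Z|$ against $w_\varrho$. Gronwall then yields the uniform bounds in $L^\infty(0,T,L^2_\varrho)\cap L^2(0,T,H^2_\varrho)\cap L^4(0,T,L^4_\varrho)$.

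The main obstacle is passing to the limit $L\to\infty$ in the cubic, for which weak convergence of $v^L$ is not enough. I would use an Aubin--Lions-type compactness in the weighted setting: the equation combined with the uniform energy bounds controls $\partial_t v^L$ in a negative-order weighted Sobolev space, while the uniform $H^2_\varrho$ estimate gives via \eqref{e:Wkpbound} a locally uniform $H^2$-control on every compact $[-L',L']$. Diagonal extraction then produces a subsequence converging strongly in $L^2(0,T;L^2_{\mathrm{loc}}(\R))$ and almost everywhere, which together with $Z^L\to Z$ locally uniformly allows passage to the limit in $\int_0^t\int(v^L+Z^L)^3\varphi\,dx\,ds$ for every compactly supported test function $\varphi$, producing a weak solution in the sense of Definition~\ref{def:weak}.

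Uniqueness is a clean energy argument. For two weak solutions $v_1,v_2$ in the stated class, the difference $d=v_1-v_2$ satisfies
\[
\partial_t d = \cL_\nu d - d\cdot Q, \qquad Q := (v_1+Z)^2 + (v_1+Z)(v_2+Z) + (v_2+Z)^2 \geqslant 0,
\]
since $a^2+ab+b^2\geqslant 0$ for all real $a,b$. Testing against $d\,w_\varrho$, discarding the non-positive contribution $-\int d^2 Q w_\varrho\, dx$, and reusing the same linear estimate as above leaves $\tfrac{d}{dt}\|d\|_{L^2_\varrho}^2 \leqslant C\|d\|_{L^2_\varrho}^2$; Gronwall together with $d(0)=0$ then forces $d\equiv 0$ in $L^2_\varrho$ almost surely.
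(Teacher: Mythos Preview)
Your overall strategy---periodic approximation, weighted $L^2_\varrho$ energy estimates uniform in the period, compactness on compacta, and uniqueness from the sign of the cubic---matches the paper's proof. Two points of comparison are worth noting.

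First, the paper does not introduce a coupled Wiener process $W^L$ and its own stochastic convolution $Z^L$; instead it takes the single process $Z$ on $\R$ (which is already in $C^0_\gamma$ for every $\gamma>0$), restricts it to $[-n,n]$, and extends $2n$-periodically to obtain $Z^{(n)}$. The uniform bound $\|Z^{(n)}\|_{L^4_\varrho}^4 \leqslant C\|Z\|_{L^4_\varrho}^4$ is then a direct weight computation, and the convergence $Z^{(n)}\to Z$ locally is trivial. Your route via coupled noises can be made to work but adds a layer you do not need. Relatedly, your explanation of why $\varrho>3$ is required is off: integrability of $|Z|^4$ against $w_\varrho$ only needs $\varrho>1$; the stronger assumption $\varrho>3$ is there so that the boundary terms from the repeated integrations by parts in the linear estimate $\int w_\varrho v\,\cL_0 v\,dx$ vanish (the paper records this in Remark~\ref{rem:rho3} and proves the quadratic-form bound as Lemma~\ref{lem:spec}).

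Second, your uniqueness argument via the factorization $(v_1+Z)^3-(v_2+Z)^3=d\cdot Q$ with $Q\geqslant 0$ is cleaner than the paper's, which expands $(d+v_2+Z)^3-(v_2+Z)^3$ term by term and uses the elementary inequality $3d^3 b\leqslant d^4+\tfrac94 d^2 b^2$ to absorb the mixed terms into $-\int w_\varrho(d^4+3d^2(v_2+Z)^2)\,dx$. Both reach the same differential inequality $\partial_t\|d\|_{L^2_\varrho}^2\leqslant C\|d\|_{L^2_\varrho}^2$ and conclude by Gronwall.
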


\begin{remark} \label{rem:rho3}
As we are looking at periodic solutions, the weight $w_\varrho$ with $\varrho>3$ has to decay sufficiently fast, so that it guarantees 
that all boundary terms at $\pm\infty$ arising in integration by parts formula 
in the following proof do all vanish.
\end{remark}

For the relatively straightforward proof of Theorem \ref{thm:exSH} we could follow some ideas of  \cite{BH:10} for the stochastic complex-valued Ginzburg-Landau equation,
where a  Galerkin method based on an orthonormal basis of $L^2_\varrho$ was used.
But here we consider the approximation using finite domains and periodic boundary conditions. 
This is a fairly standard approach also presented in~\cite{MoWe:P15} for the $\Phi^4$-model, which is similar to the Ginzburg-Landau equation.
Nevertheless, the approach of \cite{MoWe:P15} in Besov spaces does not seem to work for the Swift-Hohenberg equation, 
as we are for example not able to establish a priori bounds in Besov spaces.

Consider \eqref{e:SHt} on the domain $[-n,n]$ with  $2n$-periodic boundary conditions and initial condition 
$v^{(n)}(0)=u_0|_{[-n,n]} $ and forcing $Z^{(n)}=Z|_{[-n,n]}$.

By standard  theory of parabolic PDEs (see for example \cite{Robinson:book,Temam:book}), there is for all $n\in\N$ a  $2n$-periodic solution 
\[
v^{(n)}
 \in L^\infty(0,T,L^2([-n,n])) \cap L^2(0,T,H^2([-n,n])) \cap L^4(0,T,L^4([-n,n]))\;,
\]
which extends by periodicity to a solution on the whole real line $\R$ if we extend both $v^{(n)}(0)$ and $Z^{(n)}$ periodically, too.

Using a weight $\varrho>3$, so that all integrals and integrations by parts are well defined,
we obtain:
\begin{align}
\label{e:aprio}
 \frac12\partial_t \|v^{(n)}\|^2_{L^2_\varrho} 
 & = \int_\R w_\varrho v^{(n)} \partial_t v^{(n)} dx \nonumber\\
 &\leqslant   \int_\R w_\varrho v^{(n)} \cL_\nu v^{(n)} dx - \int_\R w_\varrho v^{(n)}(v^{(n)}+Z^{(n)})^3 dx \\
  &\leqslant   \int_\R w_\varrho v^{(n)} \cL_\nu v^{(n)} dx - \frac12 \int_\R w_\varrho |v^{(n)}|^4 dx + C \int_\R w_\varrho |Z^{(n)}|^4 dx \nonumber \;.
\end{align}
Now we first use that $Z$ is uniformly bounded in $C^0_\gamma$ for any small $\gamma>0$. See  Lemma \ref{lem:regZ} below.
Thus the $L^4_\varrho$-norm of $Z$ is finite for $\varrho>1$, and we now want to uniformly bound $\|Z^{(n)}\|^4_{L^4_\varrho}$ in the formula \eqref{e:aprio} above by $\|Z\|^4_{L^4_\varrho}$.
For this first a simple calculation verifies for $k,n\in\mathbb{N}_0$ 
\[
\sup_{|x|\leqslant  n} \frac{w_\varrho(x+2nk)}{w_\varrho(x)} 
= \left[ \sup_{|z|\leqslant 1}\frac{1+z^2n^2}{1+(z+2k)^2n^2}   \right]^{\varrho/2} \leqslant  \left[  \frac{2}{(2|k|-1)^2}  \right]^{\varrho/2} \;.
\]
Thus we obtain by periodicity for $\varrho>1$
\begin{align*}
 \|Z^{(n)}\|^4_{L^4_\varrho}
 = \int_\R w_\varrho |Z^{(n)}|^4 dx 
 & = \sum_{k\in\mathbb{Z}} \int_{(2k-1)n}^{(2k+1)n} w_\varrho(x) |Z(x)|^4 dx \\
  & = \sum_{k\in\mathbb{Z}} \int_{-n}^n w_\varrho(x+2nk) |Z(x)|^4 dx \\
& \leqslant  \sum_{k\in\mathbb{Z}}  2^{\varrho/2}  |2|k|-1|^{-\varrho}   \int_{-n}^n w_\varrho(x) |Z(x)|^4 dx \\
& \leqslant   2^{1+\varrho/2} \sum_{k\in\mathbb{N}_0}   |2k-1|^{-\varrho}  \|Z\|^4_{L^4_\varrho}\;.
\end{align*}

To proceed with \eqref{e:aprio}, we need a bound on the quadratic form of the operator~$\cL_\nu$. For this we use the following Lemma  (compare to Lemma 3.8 of Mielke, Schneider~\cite{MS:95}).

\begin{lemma}
\label{lem:spec}
 For any weight $w_\varrho$ with $\varrho>0$ given by Definition~\ref{def:weight} we have 
 \[
 \int_\R w_\varrho v \cL_0 v\, dx \leqslant   - \frac{C_2}{1+2C_2  } 
 \| v''\|^2_{L^2_\varrho} 
 +  C_2  (3 +\frac52 C_2) \| v\|_{L^2_\varrho}^2 \;.
 \]
\end{lemma}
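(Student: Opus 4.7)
The plan is to expand $\cL_0 = -(1+\partial_x^2)^2 = -1 - 2\partial_x^2 - \partial_x^4$ and compute the three resulting pieces of $\int_\R w_\varrho v \cL_0 v\, dx$ separately, using integration by parts to turn all occurrences of $v^{(4)}$ into $v''$ and $w^{(n)}_\varrho$. The zeroth-order term gives $-\|v\|^2_{L^2_\varrho}$ directly. For the bilaplacian part I would integrate by parts twice, producing
\[
-\int_\R w_\varrho v v'''' \, dx = -\int_\R w_\varrho (v'')^2 \, dx - 2\int_\R w_\varrho' v' v'' \, dx - \int_\R w_\varrho'' v v'' \, dx,
\]
and for the second-order part a single integration by parts gives $2\int_\R w_\varrho (v')^2 \, dx + 2\int_\R w_\varrho' v v' \, dx$. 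All boundary terms at $\pm\infty$ vanish because $w_\varrho$ and its derivatives decay to $0$ and $v$ is taken smooth enough for the computation (the result then extends to the required class by density, as is typically done when this lemma is applied).

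The next step is to estimate every error term involving $w_\varrho'$ or $w_\varrho''$ by means of the pointwise bounds $|w_\varrho^{(n)}| \leqslant C_n w_\varrho$ from \eqref{eq:propweight}, combined with Young's inequality. For instance
\[
\Big|\int_\R w_\varrho'' v v'' \, dx\Big| \leqslant \frac{\beta}{2}\|v''\|^2_{L^2_\varrho} + \frac{C_2^2}{2\beta}\|v\|^2_{L^2_\varrho},
\]
and analogous estimates apply to the cross terms. The $v'$ integrals are handled by an additional integration by parts: $\int_\R w_\varrho (v')^2\, dx = -\int_\R w_\varrho' v v'\, dx - \int_\R w_\varrho v v''\, dx$, after which a further Young step converts everything into $\|v\|^2_{L^2_\varrho}$ and $\|v''\|^2_{L^2_\varrho}$ only. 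The same trick dispatches $\int_\R w_\varrho' v v'\, dx$ directly, via $|w_\varrho'| \leqslant C_1 w_\varrho \leqslant C_2 w_\varrho$ (one can take $C_1 \leqslant C_2$ without loss of generality from the explicit weight).

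Finally the Young parameters $\alpha,\beta,\ldots$ must be chosen so that the accumulated coefficient of $\|v''\|^2_{L^2_\varrho}$ shrinks from the initial value $-1$ to exactly $-\frac{C_2}{1+2C_2}$, while the coefficient of $\|v\|^2_{L^2_\varrho}$ collects to $C_2(3+\tfrac{5}{2}C_2)$. This is a constrained optimisation: the loss from each Young step must be apportioned between absorbing into the good term $-\|v''\|^2_{L^2_\varrho}$ and releasing into $\|v\|^2_{L^2_\varrho}$, and the specific denominator $1+2C_2$ indicates that the two contributions coming from $w_\varrho'$ (one from the Laplacian part, two from the bilaplacian part) account for the factor $2C_2$ that must be subtracted before the dissipation is declared.

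The main obstacle is therefore not conceptual but bookkeeping: to pick the Young's inequality parameters so that exactly the fraction $\frac{C_2}{1+2C_2}$ of the original dissipation survives, and so that the remaining constant on $\|v\|^2_{L^2_\varrho}$ matches $C_2(3+\tfrac52 C_2)$. I also expect that in the displayed statement $\|v''\|_{L^2_\varrho}$ is a typo for $\|v''\|^2_{L^2_\varrho}$, which is the quantity naturally produced by the integration-by-parts scheme above.
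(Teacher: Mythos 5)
Your proposal follows essentially the same route as the paper: integrate by parts to express everything through $v$, $v'$, $v''$, bound $|w_\varrho^{(n)}|\leqslant C_n w_\varrho$, use the interpolation identity $\int_\R w_\varrho (v')^2\,dx=-\int_\R w_\varrho' vv'\,dx-\int_\R w_\varrho vv''\,dx$, and close with Young's inequality; the only piece you leave implicit is the actual parameter choice, which in the paper is Young's inequality with $\delta=1+2C_2$ applied to the collected cross term $2(1+C_2)\|v\|_{L^2_\varrho}\|v''\|_{L^2_\varrho}$, giving exactly the coefficients $-\frac{C_2}{1+2C_2}$ and $C_2(3+\frac52 C_2)$ (so the denominator comes from this tuning, not from counting the $w_\varrho'$ contributions as you speculate). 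You are also right that $\|v''\|_{L^2_\varrho}$ in the statement should read $\|v''\|^2_{L^2_\varrho}$.
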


\begin{remark}
This is not sufficient for the  approximation result later, as we need $C_2 = O(\varepsilon^2)$, which is achieved 
if we consider $w_{\varrho,\varepsilon}$ instead of $w_\varrho$. 
\end{remark}

\begin{proof}
We have to prove the Lemma first for smooth compactly supported or periodic $v$ and then also extend by 
continuity to any $v\in H^2_\varrho$.
Results like this are standard.
In order to not overload the subsequent presentation with indices, we do not recall this approximation step.
The same proof as presented below would hold for the approximation, 
and one just needs to check in the final estimate that we can pass to the limit.  

Integration by parts and H\"older's inequality yield
\begin{multline*}
  \int_\R w_\varrho v \cL_0 v\, dx =\\
  \begin{aligned}
 &= -   \int_\R w_\varrho v^2\, dx - 2 \int_\R w_\varrho v v''\, dx +   \int_\R   w_\varrho' v v''' \, dx +   \int_\R   w_\varrho v' v''' \, dx\nonumber\\
 &=  -   \int_\R w_\varrho v^2\, dx - 2 \int_\R w_\varrho v v''\, dx -   \int_\R   w_\varrho'' v v'' \, dx -2 \int_\R   w_\varrho' v' v'' \, dx - \int_\R   w_\varrho v'' v'' \, dx \nonumber \\
 &=  -   \int_\R w_\varrho v^2\, dx - 2 \int_\R w_\varrho v v''\, dx -   \int_\R   w_\varrho'' v v'' \, dx + \int_\R   w_\varrho'' (v')^2 \, dx - \int_\R   w_\varrho v'' v'' \, dx \nonumber \\
    &\leqslant   -   \| v\|^2_{L^2_\varrho} - \|v''\|^2_{L^2_\varrho}
   -  \int_\R (2 w_\varrho+ w_\varrho'') v v'' \, dx  + C_2\int_\R  w_\varrho |v'|^2 \, dx \nonumber \;.
   \end{aligned}
 \end{multline*}
Now we use the following interpolation inequality
\begin{align*} 
 \int_\R  w_\varrho (v')^2 \, dx 
 & =  -  \int_\R  w_\varrho' vv' \, dx -  \int_\R  w_\varrho vv'' \, dx \\
  & =  \frac12 \int_\R  w_\varrho'' v^2 \, dx -  \int_\R  w_\varrho vv'' \, dx \\
  & \leqslant   \frac{C_2}{2} \| v\|_{L^2_\varrho}^2 + \| v\|_{L^2_\varrho}\| v''\|_{L^2_\varrho} 
\end{align*}
to obtain:
 \begin{multline*} 
 \int_\R w_\varrho v \cL_0 v\, dx\leqslant\\
     \begin{aligned}
     &\leqslant  - (1- \frac{C_2^2}{2})  \| v\|^2_{L^2_\varrho} - \|v''\|^2_{L^2_\varrho}
   -  \int_\R (2 w_\varrho+ w_\varrho''+ C_2 w_\varrho) v v'' \, dx   \nonumber\\
    &\leqslant -  (1- \frac{C_2^2}{2}) \| v\|^2_{L^2_\varrho} - \|v''\|^2_{L^2_\varrho}
   + 2 (1 +  C_2) \| v\|_{L^2_\varrho}\|v''\|_{L^2_\varrho}  \nonumber\\
       &\leqslant -  (1- \frac{C_2^2}{2}-(1+C_2)\delta) \| v\|^2_{L^2_\varrho} -  (1-(1+C_2)\delta^{-1})\|v''\|^2_{L^2_\varrho}  \nonumber\\
 & =   - \frac{C_2}{1+2C_2  } 
 \| v''\|^2_{L^2_\varrho}  
 +  C_2  (3 +\frac52 C_2) \| v\|_{L^2_\varrho}^2  \nonumber \;,
 \end{aligned}
\end{multline*}
where we used Young's inequality with $\delta=1+2C_2$. This finishes the proof of the Lemma.
\end{proof}

Going back to \eqref{e:aprio} and using Lemma \ref{lem:spec} 
we obtain the following result for the $2n$-periodic approximation $v^{(n)}$.

 \begin{lemma}
 \label{lem:apprio}
 Let $u_0$ be in $L^2_\varrho$ for some $\varrho>3$, then 
 there is a small constant $c>0$ and a large constant $C>0$ such that for all $t>0$
 \[
 \partial_t \|v^{(n)}\|^2_{L^2_\varrho}  
 \leqslant  - c \| v^{(n)}\|^2_{H^2_\varrho} - \| v^{(n)}\|_{L^4_\varrho}^4  +  C \| v^{(n)}\|_{L^2_\varrho}^2 + C\| Z\|_{L^4_\varrho}^4 \;.
 \]
 \end{lemma}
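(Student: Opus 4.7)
The plan is to assemble Lemma \ref{lem:apprio} directly from the pieces already built in this section, so essentially no new ideas are required; one only needs to estimate each term in \eqref{e:aprio} carefully and then recombine.

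First I would finish the expansion of the cubic term in \eqref{e:aprio}. Expanding $v^{(n)}(v^{(n)}+Z^{(n)})^3$ and applying Young's inequality to the three cross terms $3(v^{(n)})^3Z^{(n)}$, $3(v^{(n)})^2(Z^{(n)})^2$, $v^{(n)}(Z^{(n)})^3$ absorbs a small fraction of $|v^{(n)}|^4$ each and produces a constant multiple of $|Z^{(n)}|^4$; this is how the inequality $-v^{(n)}(v^{(n)}+Z^{(n)})^3 \leqslant -\frac12 |v^{(n)}|^4 + C|Z^{(n)}|^4$ already stated in \eqref{e:aprio} was obtained. Combining this pointwise bound with the periodic $L^4_\varrho$-comparison computed just before the statement of Lemma \ref{lem:spec} yields $\int_\R w_\varrho |Z^{(n)}|^4 dx \leqslant C \|Z\|_{L^4_\varrho}^4$ uniformly in $n$, which accounts for the last term on the right-hand side of the claim.

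Next I would treat the linear term. Writing $\cL_\nu = \cL_0 + \nu\varepsilon^2$ and applying Lemma \ref{lem:spec} gives
\[
\int_\R w_\varrho v^{(n)} \cL_\nu v^{(n)}\, dx \leqslant -\frac{C_2}{1+2C_2}\|\partial_x^2 v^{(n)}\|_{L^2_\varrho}^2 + \Bigl(\nu\varepsilon^2 + C_2(3+\tfrac52 C_2)\Bigr) \|v^{(n)}\|_{L^2_\varrho}^2,
\]
where $C_2$ is the bound on the derivatives of $w_\varrho$ coming from \eqref{eq:propweight}. To upgrade the $\|\partial_x^2 v^{(n)}\|^2_{L^2_\varrho}$ term into a full $H^2_\varrho$-norm I will recycle the interpolation identity already derived in the proof of Lemma \ref{lem:spec}, namely $\int_\R w_\varrho (v')^2\,dx = \frac12 \int_\R w_\varrho'' v^2\,dx - \int_\R w_\varrho v v''\,dx$, which by Young's inequality yields $\|\partial_x v^{(n)}\|^2_{L^2_\varrho} \leqslant C\|v^{(n)}\|^2_{L^2_\varrho} + \tfrac12 \|\partial_x^2 v^{(n)}\|^2_{L^2_\varrho}$. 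Substituting this back allows me to convert a small fraction of the $\|\partial_x^2 v^{(n)}\|_{L^2_\varrho}^2$-coercivity into coercivity on $\|v^{(n)}\|^2_{H^2_\varrho}$ at the price of a larger constant in front of $\|v^{(n)}\|^2_{L^2_\varrho}$, which is absorbed in the $C\|v^{(n)}\|^2_{L^2_\varrho}$ term in the conclusion.

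Putting the three estimates together in \eqref{e:aprio} and relabelling constants (doubling to cancel the factor $1/2$ from $\tfrac12 \partial_t \|\cdot\|^2$) produces the stated inequality. I do not expect any real obstacle here: the cubic and noise terms were already prepared in the preceding display, Lemma \ref{lem:spec} gives the spectral bound, and the interpolation of $\|\partial_x v^{(n)}\|_{L^2_\varrho}^2$ between $\|v^{(n)}\|_{L^2_\varrho}^2$ and $\|\partial_x^2 v^{(n)}\|_{L^2_\varrho}^2$ is the only mildly nontrivial step, and even that is borrowed from the proof of Lemma \ref{lem:spec}. The only thing to check with some care is that the constants $c, C$ can be chosen independently of $n$, which is automatic because all the bounds involve $w_\varrho$ and $Z$ on $\R$, and the comparison $\|Z^{(n)}\|_{L^4_\varrho} \leqslant C\|Z\|_{L^4_\varrho}$ is uniform in $n$.
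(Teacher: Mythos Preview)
Your proposal is correct and follows the same route as the paper: the paper's proof is just the single sentence ``Going back to \eqref{e:aprio} and using Lemma \ref{lem:spec} we obtain the following result,'' and you have supplied exactly the details that sentence is pointing to. The only piece you make explicit that the paper leaves implicit is the interpolation of $\|\partial_x v^{(n)}\|_{L^2_\varrho}^2$ between $\|v^{(n)}\|_{L^2_\varrho}^2$ and $\|\partial_x^2 v^{(n)}\|_{L^2_\varrho}^2$ needed to upgrade the $\|\partial_x^2 v^{(n)}\|_{L^2_\varrho}^2$-coercivity to full $H^2_\varrho$-coercivity; this is indeed borrowed from the proof of Lemma~\ref{lem:spec} and poses no difficulty.
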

 
 As already mentioned this result at least on bounded domains is well known. 
 Usually one would estimate the $v^2-v^4$ by $C-v^2$ in order to obtain bounds on the $L^2$-norm that are uniform in time.
 But as we are only after the existence of solutions in this section, 
 we keep the $L^4$-norm in order to exploit that regularity.

 The following corollary is standard for  a priori estimates as in Lemma \ref{lem:apprio}.
 First  by neglecting 
 the negative terms on the right hand side and by applying Gronwall inequality 
 we obtain an $L^\infty(0,T,L^2_\varrho)$-bound.
 The final two estimates follow by integrating in time the inequality in Lemma \ref{lem:apprio}.

 \begin{corollary}
Under the assumptions of the previous Lemma~\ref{lem:apprio} the sequence $\{v^{(n)}\}_{n\in\N}$ is uniformly bounded 
 in $L^\infty(0,T,L^2_\varrho) \cap L^2(0,T, H^2_\varrho)\cap L^4(0,T,L^4_\varrho)$
for all $T>0$. Moreover,  $\{v^{(n)}\}_{n\in\N}$ is also uniformly bounded 
 in 
 \[L^\infty(0,T,L^2([-L,L])) \cap L^2(0,T, H^2([-L,L]))\cap L^4(0,T,L^4([-L,L]))
 \]
 for all $T>0$ and all $L>0$.
 \end{corollary}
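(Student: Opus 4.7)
The plan is to follow the standard recipe for turning the differential inequality of Lemma~\ref{lem:apprio} into the three global-in-time bounds, and then to transfer the weighted bounds to local bounds on $[-L,L]$ via the elementary comparison \eqref{e:Wkpbound}.

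First I would dispose of the stochastic forcing. Recall that by Lemma~\ref{lem:regZ} the stochastic convolution $Z$ has paths in $C^0_\gamma$ for every small $\gamma>0$, so that for any $\varrho>1$ one has $\sup_{t\in[0,T]}\|Z(t)\|_{L^4_\varrho}<\infty$ $\mathbb{P}$-almost surely. Hence the random constant $K_Z(\omega) := C\sup_{t\in[0,T]}\|Z(t,\omega)\|^4_{L^4_\varrho}$ appearing in the inequality of Lemma~\ref{lem:apprio} is finite a.s., and we treat it as a deterministic constant for the remainder of the argument, keeping in mind that the resulting bounds are pathwise.

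Next, for the $L^\infty(0,T;L^2_\varrho)$ bound I would drop the two non-positive terms $-c\|v^{(n)}\|^2_{H^2_\varrho}$ and $-\|v^{(n)}\|^4_{L^4_\varrho}$ on the right-hand side of Lemma~\ref{lem:apprio}, obtaining
\[
\partial_t\|v^{(n)}(t)\|^2_{L^2_\varrho} \leqslant C\|v^{(n)}(t)\|^2_{L^2_\varrho} + K_Z,
\]
and apply the standard Gronwall inequality with initial value $\|v^{(n)}(0)\|^2_{L^2_\varrho}\leqslant\|u_0\|^2_{L^2_\varrho}$ (which uses that the periodic extension of $u_0|_{[-n,n]}$ is bounded in $L^2_\varrho$ uniformly in $n$, thanks to the decay of $w_\varrho$ for $\varrho>3$, by the same periodic-tiling computation used earlier for $Z^{(n)}$). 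This gives a bound on $\sup_{t\in[0,T]}\|v^{(n)}(t)\|^2_{L^2_\varrho}$ that depends on $T$, $\|u_0\|_{L^2_\varrho}$ and $K_Z$, but not on $n$.

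For the $L^2(0,T;H^2_\varrho)$ and $L^4(0,T;L^4_\varrho)$ bounds I would instead integrate the full inequality of Lemma~\ref{lem:apprio} from $0$ to $T$. Rearranging and using non-negativity of $\|v^{(n)}(T)\|^2_{L^2_\varrho}$ yields
\[
c\int_0^T\|v^{(n)}(t)\|^2_{H^2_\varrho}\,dt + \int_0^T\|v^{(n)}(t)\|^4_{L^4_\varrho}\,dt \leqslant \|u_0\|^2_{L^2_\varrho} + C\int_0^T\|v^{(n)}(t)\|^2_{L^2_\varrho}\,dt + TK_Z,
\]
and the right-hand side is finite and $n$-independent by the $L^\infty$-bound just obtained. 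This delivers the first of the two claimed uniform bounds. To obtain the second, I would invoke \eqref{e:Wkpbound}: for each fixed $L\geqslant 1$, $k\in\{0,2\}$ and $p\in\{2,4\}$,
\[
\|v^{(n)}(t)\|_{W^{k,p}([-L,L])} \leqslant C L^{\varrho/p}\|v^{(n)}(t)\|_{W^{k,p}_\varrho},
\]
and integrating in time (or taking the supremum) transfers each of the three weighted bounds into the corresponding local bound, at the price of a multiplicative $L$-dependent constant that does not depend on $n$. For $0<L<1$ one simply notes $\|\cdot\|_{W^{k,p}([-L,L])}\leqslant\|\cdot\|_{W^{k,p}([-1,1])}$.

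There is no real obstacle here: the only point requiring a little care is keeping track of the randomness, i.e.\ observing that all constants can be chosen pathwise in $\omega$, so that the resulting uniform bounds on $\{v^{(n)}\}_{n\in\N}$ hold $\mathbb{P}$-almost surely rather than only in expectation. The $n$-independence of the initial data term $\|u_0|_{[-n,n]}\|_{L^2_\varrho}\leqslant\|u_0\|_{L^2_\varrho}$ is trivial once one notes that the periodic extension does not increase the weighted norm on a single fundamental domain and the tails decay because $\varrho>3$.
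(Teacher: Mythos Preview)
Your proposal is correct and follows essentially the same approach as the paper: drop the negative terms and apply Gronwall for the $L^\infty(0,T;L^2_\varrho)$-bound, then integrate the full inequality in time for the $L^2(0,T;H^2_\varrho)$ and $L^4(0,T;L^4_\varrho)$ bounds. You add useful details the paper only hints at --- in particular the uniform-in-$n$ bound on the periodically extended initial data via the same tiling computation used for $Z^{(n)}$, and the explicit passage to local norms through \eqref{e:Wkpbound} --- but these are faithful elaborations of the paper's sketch rather than a different route.
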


 Now we can finalize the proof of
Theorem \ref{thm:exSH}. 
 By taking consecutively subsequences 
 we obtain a $v \in L^\infty(0,T,L^2_\varrho) \cap L^2(0,T, H^2_\varrho)\cap L^4(0,T,L^4_\varrho)$
 such that 
 \[
v^{(n_k)} \rightharpoonup v
\quad\text{in } L^2(0,T, H^2_\varrho)\cap L^4(0,T,L^4_\varrho)
 \]
 and 
  \[
v^{(n_k)}\stackrel{*}{\rightharpoonup} v
\quad\text{in } L^\infty(0,T,L^2_\varrho)\;.
 \]
Furthermore using a diagonal argument, 
  \[
v^{(n_k)} \rightharpoonup v
\quad\text{in } L^2(0,T, H^2_\text{loc})\cap L^4(0,T,L^4_\text{loc})
 \]
 and 
  \[
v^{(n_k)}\stackrel{*}{\rightharpoonup} v
\quad\text{in } L^\infty(0,T,L^2_\text{loc})\;.
 \]

We now exploit the compactness on bounded intervals (e.g.~via the Aubin-Lions Lemma).
For this we use a uniform bound on $ v^{(n)}$ in $L^2([0,T], H^{2}[-L,L])$ 
together with a bound on the time derivative  $\partial_t v^{(n)}$ in $L^{4/3}([0,T], H^{-2}[-L,L])$, 
which follows from the weak formulation together with the bounds on $v^{(n)}$ already established.   

We then obtain (by taking another subsequence)
\[
v^{(n_k)} \to v \quad\text{in all } L^2([0,T],H^1[-L,L])
\]
and thus 
\[
v^{(n_k)}(t,x) \to v(t,x) \quad\text{for almost all  } (t,x)\;.
\]

Thus, by passing to the limit in the weak formulation for $v^{(n)}$
we obtain for any $\varphi\in C^\infty_c(\R)$ (smooth and compactly supported)
that
\[
\langle v(t) ,\varphi\rangle =  \langle u_0 ,\varphi\rangle 
- \int_0^t \langle (1+\partial_x^2) v(s), (1+\partial_x^2)\varphi \rangle ds 
- \int_0^t \langle ( v(s)+Z(s))^3 , \varphi \rangle ds\;.
\]
This implies  that $v$ is a weak solution of
\[
\partial_t v= \cL_\nu v - (v+Z)^3\;, \quad v(0)=u_0\;.
\]
Furthermore by regularity of $v$ we can take the scalar product with $v$ here. 
This will be used in the proof of uniqueness.

\begin{remark}
One needs to be careful here, as the resulting 
limit (i.e., the solution) is in general not a measurable random variable.
This is well known and  due to the fact that we take subsequences that might depend on the given realization of $Z$, and thus the limit is in general not measurable. 
Of course one could try to construct a measurable selection, but 
uniqueness, which is proved in the next step,  
enforces that the whole sequence $v^{(n)}$ converges to the solution $v$ and thus the limit is anyway a measurable random variable.
\end{remark}

For uniqueness consider two weak solutions $v_1$ and $v_2$ 
in $L^\infty(0,T,L^2_\varrho) \cap L^2(0,T, H^2_\varrho)\cap L^4(0,T,L^4_\varrho)$ with $\varrho>3$.
Define 
\[d=v_1-v_2
\]
which solves 
\[
\partial_t d= \cL_\nu d - (v_1+Z)^3 + (v_2+Z)^3\;, \quad d(0)=0\;.
\]
By the regularity of $d$ we have $\partial_t d \in L^2(0,T,H^{-2}_\varrho)$, thus we can take the $L^2_\varrho$-scalar product with $d$ 
to obtain 
\begin{align*}
 \frac12 \partial_t \|d\|^2_{L^2_\varrho}
 &= \int_\R  w_\varrho d[ \cL_\nu d - (v_1+Z)^3 + (v_2+Z)^3] dx\\
  &= \int_\R  w_\varrho d[ \cL_\nu d - (d+v_2+Z)^3 + (v_2+Z)^3] dx\\
    &= \int_\R  w_\varrho d[ \cL_\nu d - d^3 - 3 d^2(v_2+Z) - 3d(v_2+Z)^2] dx\\
  &\leqslant  \int_\R  w_\varrho d \cL_\nu d dx  -  \int_\R  w_\varrho [d^4 + 3d^2(v_2+Z)^2] -    3 \int_\R  w_\varrho  d^3(v_2+Z) dx\\ 
    &\leqslant   - c \|d\|^2_{H^2_\varrho} + C \|d\|^2_{L^2_\varrho} 
\end{align*}
using that $3d^3b \leqslant  d^4 + \frac94 d^2b^2$ and Lemma \ref{lem:spec}. 
Neglecting now all negative terms and using Gronwall's inequality yields (as $d(0)=0$) that
\[d(t)=0\quad \text{for all }t\geqslant0 \;,
\]
and thus uniqueness of solutions.


\section{Additional regularity for the Ginzburg-Landau equation}
\label{sec:reg}


At the moment we need a very strong weight for the existence and uniqueness of solutions, 
and also related results like the one of \cite{MoWe:P15} always use Besov spaces with integrable weights.
Recall the amplitude equation for the complex-valued amplitude $A$
\begin{equation}
\label{e:GL}
 \partial_T A =4\partial_X^2 A + \nu A -3A|A|^2 + \partial_T \mathcal{W}\;, \qquad A(0)=A_0\;,
\end{equation}
with complex-valued space time white noise $ \partial_T \mathcal{W}$.
Now we use again the standard substitution 
\[
B=A- \mathcal{Z}
\]
with stochastic convolution for $\Delta_\nu = 4\partial_X^2+\nu$ defined by
\begin{equation*}
 \mathcal{Z}(T) =\mathcal{W}_{\Delta_\nu}(T)
 = \int_0^T \e^{(T-s)\Delta_\nu} d \mathcal{W}(s)\;.
\end{equation*}
Now $B$ solves 
\begin{equation}
 \label{e:GLt}
 \partial_T B =4\partial_X^2 B + \nu B -3(B+\mathcal{Z})|B+\mathcal{Z}|^2\;, \qquad B(0)=A_0\;.
\end{equation}
In the regularity results of this section, we will try to weaken the weight as much as possible. 
Moreover, we show spatial H\"older regularity, which is the most we can hope for, 
as we are limited by the regularity of the stochastic convolution~$\mathcal{Z}$. See Lemma \ref{lem:regZ} below.

The key idea is to use energy estimates together with a classical bootstrap argument:
\begin{itemize}
\item Using the $L^2_\varrho$-energy estimate we obtain 
 \[
 A-\mathcal{Z}\in L^\infty(0,T,L^2_\varrho) \cap L^2(0,T,H^1_\varrho) \cap L^4(0,T,L^4_\varrho)
 \]
 in the proof of existence in Theorem \ref{thm:exGL}.
\item Using the $L^p_\varrho$-norm we derive  $A \in L^\infty (0,T,L^q_\varrho)$ in Lemma \ref{lem:5.1}.
\item The $H^1_\varrho$-norm yields $ A-\mathcal{Z} \in L^\infty (0,T,H^1_\varrho) \cap  L^2 (0,T,H^2_\varrho)$ 
in  Lemma \ref{lem:apH1}
\item Sobolev embedding yields H\"older regularity $A\in L^\infty(0,T, C^0_\kappa)$ for arbitrarily small weight  $\kappa>0$. 
See Theorem \ref{thm:apHoeld}.
\item Using the $W^{1,p}_\varrho$-norm we derive $A - \mathcal{Z} \in L^\infty(0,T, W^{1,2p}_\varrho)$
in Lemma \ref{lem:apW1p}.
\item The final result again by Sobolev embedding is $ A \in L^\infty(0,T,C^{0,\eta}_\kappa )$
for all H\"older exponents $\eta\in(0,1/2)$ and for arbitrarily small weight  $\kappa>0$. 
See Corollary \ref{cor:maxregA}.
\end{itemize}

This procedure can only be done for the amplitude equation, but not for the Swift-Hohenberg equation.
For example for the $L^p$-estimate we need that $\langle u^{p-1}, \Delta u \rangle_{L^2_\varrho} \leq c \|u\|_{L^p_\varrho}$, which holds for 
the Laplacian for any $p\geq2$, but
for  the Swift-Hohenberg operator only for $p=2$.

The proof of existence and uniqueness with a strong weight is the same as for the Swift-Hohenberg equation before.
We only need the slightly weaker assumption $\varrho>2$ in this case. 
The precise theorem is:
\begin{theorem}
 \label{thm:exGL} 
For $T>0$ and all $A_0\in L^2_\varrho$ with $\varrho>2$ there is a complex-valued stochastic process $B$ such that $\mathbb{P}$-almost surely 
\[
B\in L^\infty(0,T,L^2_\varrho) \cap L^2(0,T,H^1_\varrho) \cap L^4(0,T,L^4_\varrho) 
\]
and $B$ is a weak solution of \eqref{e:GLt}. 
Moreover, for any other such weak solution $\tilde{B}$ we have
\[
\mathbb{P}( \sup_{t\in[0,T]}\|B(t)- \tilde{B}(t)\|_{L^2_\varrho}=0 ) =1\;.
\]
\end{theorem}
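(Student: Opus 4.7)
The plan is to run the same strategy as in Theorem~\ref{thm:exSH}: construct a sequence of $2n$-periodic approximations $B^{(n)}$ on $[-n,n]$ to the shifted equation~\eqref{e:GLt}, derive uniform a~priori bounds in the triple $L^\infty(0,T,L^2_\varrho)\cap L^2(0,T,H^1_\varrho)\cap L^4(0,T,L^4_\varrho)$, extract weakly (and weak-$*$) convergent subsequences plus a.e.\ convergence on bounded intervals via an Aubin--Lions compactness argument, and finally pass to the limit in the weak formulation. Uniqueness is obtained by an energy estimate on the difference of two solutions. The reason we can relax the weight to $\varrho>2$ (instead of $\varrho>3$) is that $4\partial_X^2$ is of second order, so only one integration by parts is required in the dissipative term and only boundary terms of the form $w_\varrho' |B|^2$ must be controlled.

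First, I would set $B^{(n)}(0)=A_0|_{[-n,n]}$ and $\mathcal{Z}^{(n)} = \mathcal{Z}|_{[-n,n]}$, both extended $2n$-periodically, and invoke standard parabolic theory on the torus to obtain a $2n$-periodic solution $B^{(n)}$ living in the expected class. The key a~priori estimate is obtained by testing with $w_\varrho \bar B^{(n)}$ and taking real parts:
\begin{align*}
\tfrac12\partial_T\|B^{(n)}\|_{L^2_\varrho}^2
&= \mathrm{Re}\!\int_\R w_\varrho \bar B^{(n)}\bigl[4\partial_X^2 B^{(n)} + \nu B^{(n)} -3(B^{(n)}+\mathcal{Z}^{(n)})|B^{(n)}+\mathcal{Z}^{(n)}|^2\bigr]dX.
\end{align*}
The dissipative term, after one integration by parts, contributes $-4\|\partial_X B^{(n)}\|_{L^2_\varrho}^2$ plus a remainder controlled via $|w_\varrho'|\le C w_\varrho$ and Young's inequality; this plays the role of Lemma~\ref{lem:spec} but is much simpler because only two derivatives occur and no $w_\varrho''$ needs integrating. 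The cubic term, by algebraic manipulation $|B+\mathcal{Z}|^2(B+\mathcal{Z}) = B|B|^2 + \text{mixed terms involving }\mathcal{Z}$, gives a gain $-c\|B^{(n)}\|_{L^4_\varrho}^4$ after absorbing the mixed contributions via Young, at the cost of a term $C\|\mathcal{Z}^{(n)}\|_{L^4_\varrho}^4$.

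To uniformly bound $\|\mathcal{Z}^{(n)}\|_{L^4_\varrho}^4$ in terms of $\|\mathcal{Z}\|_{L^4_\varrho}^4$, I would reuse verbatim the periodic-extension calculation carried out for $Z^{(n)}$ in the Swift--Hohenberg case, which requires only $\varrho>1$; the H\"older regularity of $\mathcal{Z}$ stated in Lemma~\ref{lem:regZ} ensures that $\mathcal{Z}\in L^4_\varrho$ almost surely for any $\varrho>1$. Combining all the estimates gives the Ginzburg--Landau analogue of Lemma~\ref{lem:apprio},
\[
\partial_T \|B^{(n)}\|_{L^2_\varrho}^2 \leqslant -c\|B^{(n)}\|_{H^1_\varrho}^2 - \|B^{(n)}\|_{L^4_\varrho}^4 + C\|B^{(n)}\|_{L^2_\varrho}^2 + C\|\mathcal{Z}\|_{L^4_\varrho}^4,
\]
from which Gronwall and time integration yield the uniform bounds in the three function spaces.

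The passage to the limit is standard: by weak and weak-$*$ compactness together with a diagonal argument I extract $B^{(n_k)}\rightharpoonup B$ in the global spaces and, against $L^2(0,T,H^2_{\mathrm{loc}})$-type bounds, strong convergence in $L^2(0,T,H^1([-L,L]))$ for every $L$, hence a.e.\ convergence on $[0,T]\times\R$. This a.e.\ convergence is the only delicate point, since it is what allows the cubic nonlinearity $(B^{(n_k)}+\mathcal{Z})|B^{(n_k)}+\mathcal{Z}|^2$ to pass into the limit when tested against $\varphi\in C_c^\infty(\R)$; the uniform $L^4_\varrho$ bound then gives equi-integrability and Vitali's theorem does the rest. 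For uniqueness, $d=B_1-B_2$ satisfies an equation whose nonlinearity expands as $-3d|d|^2 + \text{cross terms in }(B_2+\mathcal{Z})$; the dissipative cubic $-3\|d\|_{L^4_\varrho}^4$ absorbs the cross terms via Young, and the analogue of the dissipative estimate combined with Gronwall gives $d\equiv 0$. The main obstacle is precisely the strong-convergence step needed for the cubic, together with carefully tracking the effect of the weight in every integration by parts so that the weight exponent can be kept at $\varrho>2$ and no boundary terms survive.
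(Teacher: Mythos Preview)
Your proposal is correct and follows exactly the approach the paper itself indicates: it explicitly states that the proof of Theorem~\ref{thm:exGL} ``is the same as for the Swift-Hohenberg equation before,'' with the weaker condition $\varrho>2$ arising because the second-order operator requires one fewer integration by parts, which you identify correctly. One small imprecision: in the Ginzburg--Landau case the a~priori estimate only yields $L^2(0,T,H^1_\varrho)$ (not $H^2$), so the Aubin--Lions step gives strong convergence in $L^2(0,T,L^2([-L,L]))$ rather than in $H^1$; this is still sufficient for the a.e.\ convergence you need to pass the cubic term to the limit.
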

Note that in the previous theorem we only assumed $\varrho>2$ for the weight. 
This is due to the fact that for the Laplacian we need one integration by parts less. See Remark \ref{rem:rho3}.

While for $B$, in the following we can go all the way up to H\"older exponent $1$ and even show $W^{1,p}_\varrho$-regularity.
For the amplitude $A$ we are limited by the following Lemma on the regularity of the stochastic convolution $\mathcal{Z}$.
\begin{lemma}\label{lem:regZ}
For all $\eta<1/2$, $T>0$ and small weight $\kappa>0$ one has $\mathbb{P}$-almost surely  
\[
\mathcal{Z} \in L^\infty(0,T,C^{0,\eta}_\kappa )\;.
\]
Actually, for all $p>0$ there exists a constant $C_p$ such that
\[
	\mathbb{E}[\sup_{[0,T]}\|\mathcal{Z}\|^p_{C^{0,\eta}_\kappa}]\leqslant  C_p.
\]
\end{lemma}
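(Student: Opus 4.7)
The plan is to combine the fact that $\mathcal{Z}$ is a centered Gaussian random field with Kolmogorov's continuity theorem, Gaussian hypercontractivity, and the spatial stationarity of $\mathcal{Z}$, then pass from a compact interval to the weighted space via a maximum inequality for Gaussian suprema.

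First, since both the analytic semigroup $\e^{t\Delta_\nu}$ and the complex space-time white noise $\partial_T\mathcal{W}$ are translation invariant in $X$, the Gaussian field $(T,X)\mapsto\mathcal{Z}(T,X)$ is stationary in $X$. Using the heat-kernel representation of $\e^{t\Delta_\nu}$ and It\^o's isometry, a direct computation of the covariance yields
$$\mathbb{E}|\mathcal{Z}(T,X) - \mathcal{Z}(T',Y)|^2 \leqslant C_T\bigl(|X-Y|^{2\eta} + |T-T'|^{\eta}\bigr)$$
for every $\eta < 1/2$, uniformly in $X,Y\in\R$ and $T,T'\in[0,T]$. By Gaussian hypercontractivity (equivalence of Gaussian moments) these bounds extend to $L^{2m}$-estimates with the same exponents and a constant depending on $m$.

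Second, for $m$ large, the anisotropic Kolmogorov continuity theorem applied on $[0,T]\times I_k$ with $I_k=[k-1,k+1]$ produces a jointly continuous version of $\mathcal{Z}$ whose spatial Hölder seminorm has finite polynomial moments. By stationarity in $X$ (and Fernique's theorem for Gaussian suprema, upgrading one moment to all of them),
$$M_p := \mathbb{E}\sup_{t\in[0,T]}\|\mathcal{Z}(t)\|^p_{C^{0,\eta}(I_k)} < \infty$$
is a constant independent of $k\in\mathbb{Z}$. To pass to the weighted norm, observe that any two points $X,Y\in[-L,L]$ either lie in adjacent cells $I_k,I_{k\pm 1}$ (controlled by the single-cell Hölder seminorm) or satisfy $|X-Y|\geqslant 1$ (controlled by the sup norm), so that
$$\|\mathcal{Z}(t)\|_{C^{0,\eta}([-L,L])} \leqslant C\max_{|k|\leqslant L+1}\|\mathcal{Z}(t)\|_{C^{0,\eta}(I_k)}.$$
Each of the identically distributed random variables on the right-hand side is subgaussian, so the classical Gaussian maximum inequality gives $\mathbb{E}\max_{|k|\leqslant L+1}(\cdot)^p\leqslant C_p(\log L)^{p/2}$ for $L$ large. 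A dyadic decomposition $L\in[2^j,2^{j+1})$ then yields
$$\mathbb{E}\Bigl[\sup_{L>1}L^{-\kappa p}\|\mathcal{Z}\|^p_{C^{0,\eta}([-L,L])}\Bigr] \leqslant \sum_{j\geqslant 0}2^{-j\kappa p}\,C_p\,(j+1)^{p/2} < \infty$$
for any $\kappa>0$ and any $p>0$, which is exactly the claimed bound on $\mathbb{E}\sup_{[0,T]}\|\mathcal{Z}\|^p_{C^{0,\eta}_\kappa}$.

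The main obstacle is the passage from compact-interval estimates to the global weighted Hölder space: Kolmogorov's theorem by itself only gives moments on compact sets, and the $C^{0,\eta}([-L,L])$-norm of a Gaussian field on $\R$ does tend to infinity with $L$. The key structural point that makes the argument go through is spatial stationarity, which reduces the global bound to a maximum over identically distributed subgaussian variables whose growth in $L$ is only $\sqrt{\log L}$; this is then absorbed by \emph{any} positive polynomial weight $L^{-\kappa}$, so the statement holds for arbitrarily small $\kappa>0$.
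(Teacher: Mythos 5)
Your proof is correct, but it reaches the weighted bound by a genuinely different route from the one the paper sketches. The paper passes from compact intervals to the weighted Hölder space through the embedding of $W^{\alpha,p}([-L,L])$ into $C^{0,\gamma}([-L,L])$ for $\gamma<\alpha<1/2$ with $p\to\infty$, tracking the $L$-dependence of the embedding constant via the integrated Hölder quotients (as in Lemmas 2.4 and 3.1 of \cite{BB:15} and the rescaling computations of Theorems \ref{thm:apHoeld} and \ref{thm:whreg}); the resulting growth $L^{C/p}$ is absorbed by $L^{-\kappa}$ only after choosing $p$ large depending on $\kappa$, and the supremum in time is handled separately by the factorization method. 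You instead exploit the exact spatial stationarity of $\mathcal{Z}$: after a joint (anisotropic) Kolmogorov argument on unit cells, which simultaneously takes care of the time supremum, the cell norms are identically distributed and, by Fernique/Borell--TIS, subgaussian, so the maximum over the $O(L)$ cells covering $[-L,L]$ grows only like $\sqrt{\log L}$, which any polynomial weight absorbs for every fixed $p$. This is in a sense sharper (logarithmic rather than $L^{\kappa}$ growth, with no coupling between $p$ and $\kappa$) and quite self-contained, but it leans essentially on translation invariance of both the noise and the semigroup, which the embedding route does not need and which would not survive for non-stationary forcings or for the $\varepsilon$-dependent rescaled convolutions treated by the same machinery in \cite{BB:15}. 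One step to state more carefully: when reducing $\|\mathcal{Z}(t)\|_{C^{0,\eta}([-L,L])}$ to the cell maxima, two points at distance less than one in fact always lie in a \emph{common} cell $I_k=[k-1,k+1]$ (because consecutive cells overlap on intervals of length one), not merely in adjacent ones; as written, a pair split between adjacent cells would require an extra triangle-inequality step through a point in the overlap, which costs only a harmless factor.
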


\begin{proof}[Sketch of Proof]
We refrain from giving all the lengthy details of this proof here.
More details on the estimates used can for example be found 
in Lemma~2.4 and Lemma~3.1 in~\cite{BB:15}, where all tools necessary to prove this 
lemma are presented.

The proof for regularity of the stochastic convolution is fairly  standard and based on the proof 
of  the Kolmogorov test for H\"older continuity of stochastic processes.
First note that by H\"older's inequality it is enough to verify the claim for large $p$.
For spatial regularity we consider the embedding of $C^{0,\gamma}([-L,L])$ into 
$W^{\alpha,p}([-L,L])$ for $\gamma<\alpha<1/2$ and $p\to\infty$.  
Then we can use explicit representation of these norms in terms of integrated H\"older quotients.
For the bound in time, we can use the celebrated factorization method of Da Prato, Kwapie\'n and Zabczyk~\cite{DKZ87}. 
\end{proof}

Let us first state a standard energy estimate for the $L^{2p}_\varrho$-norm. 
Here and in all other energy estimates, we need to perform these estimates 
for the approximating sequence from the proof of existence, 
and then pass to the limit later. 

We do not give details of the proof of this lemma and the following ones, as they are very similar to the standard ones for real-valued Ginzburg-Landau (see~\cite{Peszat1}). While this paper was under revision, a new result for the weighted supremum norm with very slowly decaying weights was proven in~\cite{MoiWeb}.
\begin{lemma}\label{lem:5.1}
Let $A$ be such that $B=A-\mathcal{Z}$ is a weak solution of \eqref{e:GLt} given by Theorem \ref{thm:exGL} and fix $T>0$.
If $\varrho>1$ and 
$p$ is such that $A(0)\in L^{2p}_\varrho$, 
then $\mathbb{P}$-almost surely
\[
	A, B\in L^\infty (0,T,L^{2p}_\varrho)\;.
\]
Moreover, for all 
$p\geqslant \sfrac{1}{2}$
there exists a constant 
$C_p$
such that
\[
\mathbb{E}\left[\int_0^T \|A\|^{2p}_{L^{2p}_\varrho}ds\right]\leqslant  C_p, \qquad \mathbb{E}\left[\int_0^T \|B\|^{2p}_{L^{2p}_\varrho}ds\right]\leqslant  C_p.
\]
\end{lemma}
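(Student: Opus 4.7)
The natural plan is to work with the random PDE \eqref{e:GLt} for $B$ rather than with $A$ directly, since $B$ has no driving noise and can be hit with pathwise energy estimates of any order. As in the existence proof, one performs all computations on the $2n$-periodic approximation $B^{(n)}$ from Theorem~\ref{thm:exGL}, which is smooth enough in each time slice to justify integration by parts, and then passes to the weak$^*$-/weak limit in $L^\infty(0,T,L^q_\varrho)$ using the lower semicontinuity of the norm and the already-established convergence of subsequences. Once $B\in L^\infty(0,T,L^q_\varrho)$ is obtained, the corresponding conclusion for $A=B+\mathcal{Z}$ follows from the triangle inequality and the pointwise bound $|\mathcal{Z}(x)|\leqslant\|\mathcal{Z}\|_{C^0_\kappa}(1+x^2)^{\kappa/2}$ of Lemma~\ref{lem:regZ}: for any $\varrho>1$ and any $q\geqslant 1$ one can pick $\kappa>0$ so small that $(1+x^2)^{q\kappa/2}w_\varrho(x)\in L^1(\R)$, giving $\mathcal{Z}\in L^\infty(0,T,L^q_\varrho)$ almost surely with moments of every order.

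For the energy estimate itself, I multiply the equation for $B$ by $w_\varrho|B|^{q-2}\bar B$ and take real parts to get
\[
\tfrac{1}{q}\partial_T\|B\|^q_{L^q_\varrho}
=4\,\mathrm{Re}\!\int_\R w_\varrho |B|^{q-2}\bar B\,\partial_X^2 B\,dx
+\nu\|B\|^q_{L^q_\varrho}
-3\,\mathrm{Re}\!\int_\R w_\varrho|B|^{q-2}\bar B\,(B+\mathcal{Z})|B+\mathcal{Z}|^2\,dx.
\]
Two integrations by parts turn the Laplacian term into the sum of a non-positive dissipative part (essentially $-c\int w_\varrho|B|^{q-2}|\partial_X B|^2$, which I simply drop) and weight-derivative remainders that can be cast in the form $\tfrac{4}{q}\int w_\varrho''|B|^q\,dx$; by \eqref{eq:propweight} these are bounded by $C\|B\|^q_{L^q_\varrho}$. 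Expanding $(B+\mathcal{Z})|B+\mathcal{Z}|^2$ and isolating the sign-definite contribution $-3|B|^{q+2}$, the remaining cross terms are estimated pointwise by $C(|B|^{q+1}|\mathcal{Z}|+|B|^q|\mathcal{Z}|^2+|B|^{q-1}|\mathcal{Z}|^3)$; each is split via Young's inequality between $|B|^{q+2}$ and $|\mathcal{Z}|^{q+2}$ with a small coefficient on the first piece, so that the resulting $|B|^{q+2}$ contributions are absorbed by the $-3|B|^{q+2}$ term. Collecting everything one arrives at the pathwise inequality
\[
\partial_T\|B\|^q_{L^q_\varrho}\leqslant C\|B\|^q_{L^q_\varrho}-c\|B\|^{q+2}_{L^{q+2}_\varrho}+C\|\mathcal{Z}\|^{q+2}_{L^{q+2}_\varrho}.
\]
Dropping the second (good) term and applying Gronwall yields a bound on $\sup_{[0,T]}\|B\|^q_{L^q_\varrho}$ in terms of $\|A(0)\|^q_{L^q_\varrho}$ and $\int_0^T\|\mathcal{Z}\|^{q+2}_{L^{q+2}_\varrho}\,ds$, which is a.s.\ finite by Lemma~\ref{lem:regZ} combined with the embedding $C^0_\kappa\hookrightarrow L^{q+2}_\varrho$ for small $\kappa$. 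Taking expectations in Gronwall's inequality, together with the moment bounds for $\mathcal{Z}$, gives the quantitative estimate $\mathbb{E}\int_0^T\|B\|^q_{L^q_\varrho}\,ds\leqslant C_q$, and the analogous bound for $A$ follows.

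The main technical obstacle I expect is the careful bookkeeping of the Laplacian term for complex-valued $B$ and general $q\geqslant 2$: one must verify that, after integration by parts with the weight, the negative part $-c\int w_\varrho|B|^{q-2}|\partial_X B|^2$ genuinely dominates the error produced by differentiating $|B|^{q-2}\bar B$, and that the remaining weight-derivative integrals truly collapse to a multiple of $\|B\|^q_{L^q_\varrho}$ via the bounds in \eqref{eq:propweight}. All of this is essentially routine once one commits to the regularized approximation $|B|^{q-2}$ replaced by $(|B|^2+\delta)^{(q-2)/2}$ and sends $\delta\to 0$ at the end, combined with the $n\to\infty$ limit of the $2n$-periodic approximation; for the case $1\leqslant q<2$, one first proves the claim for $q=2$ (already inside Theorem~\ref{thm:exGL}) and uses Hölder in $x$ with the integrable weight $w_\varrho$ ($\varrho>1$) to interpolate downward.
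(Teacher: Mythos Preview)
Your proposal is correct and follows essentially the same approach as the paper: an $L^q_\varrho$-energy estimate on the random PDE for $B$, integration by parts on the Laplacian term, pointwise Young on the cubic cross terms, and the embedding $C^0_\kappa\hookrightarrow L^q_\varrho$ for $\mathcal{Z}$. The only cosmetic difference is that the paper, instead of applying Gronwall to $\partial_T\|B\|^q_{L^q_\varrho}\leqslant C\|B\|^q_{L^q_\varrho}-c\|B\|^{q+2}_{L^{q+2}_\varrho}+C\|\mathcal{Z}\|^{q+2}_{L^{q+2}_\varrho}$, uses H\"older with the integrable weight to absorb $\|B\|^q_{L^q_\varrho}\leqslant C+\|B\|^{q+2}_{L^{q+2}_\varrho}$ and then simply integrates in time; your Gronwall route is equally valid. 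Your second integration by parts on the weight term (yielding $\tfrac{4}{q}\int w_\varrho''|B|^q$) is in fact slightly cleaner than the paper's Young-inequality absorption of $\int w_\varrho|B|^{q-1}|B'|$ into the dissipative piece.
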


We also need $L^\infty(0,T,H^1_\varrho)$-regularity.
\begin{lemma}
\label{lem:apH1}
Let $A$ be such that $B=A-\mathcal{Z}$ is a weak solution of~\eqref{e:GLt} given by Theorem \ref{thm:exGL} and fix $T>0$.
If $\varrho>2$ and $A(0)\in H^1_\varrho \cap L^6_\varrho$,
then we have $\mathbb{P}$-almost surely
\[
B= A-\mathcal{Z} \in L^\infty (0,T,H^1_\varrho) \cap  L^2 (0,T,H^2_\varrho)\;.
\]
Moreover, there exists a constant $C$ such that
\[
\mathbb{E}[\int_0^T \|B\|^2_{H^2_\varrho}ds]\leqslant  C, \qquad \mathbb{E}[\sup_{[0,T]}\|B\|^2_{H^1_\varrho}]\leqslant  C\;.
\]
\end{lemma}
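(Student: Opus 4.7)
The plan is to run an $L^2_\varrho$ energy estimate on $B':=\partial_X B$, combining it with the bounds already obtained from Theorem~\ref{thm:exGL} (giving $B\in L^\infty(0,T,L^2_\varrho)\cap L^2(0,T,H^1_\varrho)$) and from Lemma~\ref{lem:5.1} with $q=6$ (giving an $L^6_\varrho$-bound in both the pathwise and moment senses, which requires $A(0)\in L^6_\varrho$). As usual, I would first carry out the computation for the periodic approximation $B^{(n)}$ from the construction in Theorem~\ref{thm:exGL}, suppressing the index $n$, and pass to the limit in the final integrated inequality using weak lower semicontinuity of norms.

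Concretely, I would compute
\[
\tfrac12 \partial_T \|B'\|_{L^2_\varrho}^2
= \mathrm{Re}\!\int_\R w_\varrho\, \partial_X(\partial_T B)\,\overline{B'}\,dx
= -\mathrm{Re}\!\int_\R (w_\varrho'\overline{B'}+w_\varrho \overline{B''})\,\partial_T B\,dx,
\]
and substitute $\partial_T B = 4B''+\nu B - \mathcal{N}$ with $\mathcal{N}:=3(B+\mathcal Z)|B+\mathcal Z|^2$. The $-4\int w_\varrho |B''|^2\,dx$ term provides dissipation; all cross-terms involving $w_\varrho'$ are controlled via $|w_\varrho'|\leqslant Cw_\varrho$ (cf.~\eqref{eq:propweight}) and Young's inequality, at the cost of an $\epsilon$ fraction of the dissipation plus a constant multiple of $\|B'\|_{L^2_\varrho}^2$ and $\|B\|_{L^2_\varrho}^2$. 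The nonlinear term is handled by
\[
\Bigl|\mathrm{Re}\!\int_\R w_\varrho \overline{B''}\,\mathcal{N}\,dx\Bigr|
\leqslant \epsilon\!\int_\R w_\varrho|B''|^2\,dx + C_\epsilon \bigl(\|B\|_{L^6_\varrho}^6+\|\mathcal{Z}\|_{L^6_\varrho}^6\bigr),
\]
and analogously for the $w_\varrho'$ piece. Absorbing $\epsilon\int w_\varrho|B''|^2$ into the dissipation yields
\[
\partial_T\|B'\|_{L^2_\varrho}^2 + c\|B''\|_{L^2_\varrho}^2
\leqslant C\bigl(\|B\|_{H^1_\varrho}^2 + \|B\|_{L^6_\varrho}^6 + \|\mathcal Z\|_{L^6_\varrho}^6\bigr).
\]

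After this, a pathwise Gronwall-type argument (using that the right-hand side is integrable in $T$ almost surely, by Theorem~\ref{thm:exGL} and Lemma~\ref{lem:5.1}) gives the almost sure inclusions $B\in L^\infty(0,T,H^1_\varrho)\cap L^2(0,T,H^2_\varrho)$, while integrating and taking expectations yields the moment bounds, using Lemma~\ref{lem:5.1} to control $\mathbb{E}\!\int_0^T\|B\|_{L^6_\varrho}^6\,ds$ and Lemma~\ref{lem:regZ} together with the pointwise estimate $|\mathcal Z(x)|^6\leqslant \|\mathcal Z\|_{C^0_\kappa}^6(1+x^2)^{3\kappa}$ to control $\mathbb{E}\sup_{[0,T]}\|\mathcal Z\|_{L^6_\varrho}^6$; for the latter we need $\varrho-6\kappa>1$, which is fine since $\varrho>2$ allows $\kappa$ to be chosen small.

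The main obstacle is the cubic nonlinearity together with the weight derivatives: unlike the $L^{2p}_\varrho$ estimate of Lemma~\ref{lem:5.1}, which had a good sign term $-3\|B\|_{L^{2p+2}_\varrho}^{2p+2}$ absorbing the cubic, at the $H^1_\varrho$ level the best one can do is to trade the cubic against $\int w_\varrho|B''|^2$ via Young, at the price of forcing by $\|B\|_{L^6_\varrho}^6$; this is exactly the point where the bootstrap from Lemma~\ref{lem:5.1} becomes essential. The weight $\varrho>2$ (rather than the stronger $\varrho>3$ needed for Swift-Hohenberg) is enough because only two integrations by parts appear, and all boundary terms for the periodic approximants vanish by periodicity before one passes to the limit.
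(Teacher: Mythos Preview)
Your proposal is correct and follows essentially the same route as the paper: an $L^2_\varrho$ energy estimate for $\partial_X B$ that produces the dissipative term $-c\int w_\varrho|B''|^2$ and trades the cubic via Young's inequality against $\|A\|_{L^6_\varrho}^6\leqslant C(\|B\|_{L^6_\varrho}^6+\|\mathcal Z\|_{L^6_\varrho}^6)$, followed by Gronwall for the $L^\infty(0,T,H^1_\varrho)$ bound and integration plus expectation for the $L^2(0,T,H^2_\varrho)$ moment bound. The only cosmetic difference is that you first integrate by parts to move $\partial_X$ off of $\partial_T B$, whereas the paper differentiates the equation and writes $\partial_T B'=4B'''+\nu B'-3(A|A|^2)'$ before integrating by parts; after one more integration by parts these computations coincide term by term.
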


Now we are aiming at space-time regularity  for $B$ and thus $A$ in $L^\infty(0,T,C^0_\kappa)$.
This is achieved by a pointwise interpolation of $C^0([-L,L])$ between $L^p([-L,L])$ and $H^1([-L,L])$ 
for each fixed $t\in[0,T_0]$.
But we need to be careful in the arguments as the interpolation constants  depend on the spatial domain size $L$. 
We will prove:
\begin{theorem}
\label{thm:apHoeld}
Let $A$ be such that $B=A-\mathcal{Z}$ is a weak solution of \eqref{e:GLt} given  for $T>0$ by Theorem \ref{thm:exGL}.
If $\varrho>2$, $A(0)\in H^1_\varrho$ and
$A(0)\in L^p_\varrho$ for all large~$p$, 
then   $\mathbb{P}$-almost surely
\[ A, B \in L^\infty(0,T, C^0_\kappa)\quad \text{for all small }\kappa>0\;.
\]
Moreover, for all $p>0$ there exists a constant $C_p$ such that
\[
\mathbb{E}[\sup_{[0,T]}\|A\|^p_{C^{0}_\kappa}]\leqslant  C_p\;,
\qquad \mathbb{E}[\sup_{[0,T]}\|B\|^p_{C^{0}_\kappa}]\leqslant  C_p\;.
\]
\end{theorem}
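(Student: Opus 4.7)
The plan is to use a Gagliardo--Nirenberg interpolation on each bounded interval $[-L,L]$, combining the $L^p_\varrho$-bound from Lemma~\ref{lem:5.1} with the $H^1_\varrho$-bound from Lemma~\ref{lem:apH1}, and then pass to the weighted sup-norm via \eqref{e:Wkpbound} and \eqref{e:equivC0}, choosing $p$ so large that the residual growth in $L$ is smaller than the prescribed weight exponent $\kappa$.

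Concretely, first I would rescale $[-L,L]$ to $[-1,1]$ and invoke the scale-invariant one-dimensional Gagliardo--Nirenberg inequality with interpolation exponent $\theta=2/(p+2)$ (the unique value for which the scaling of the three norms cancels). This yields
\begin{equation*}
 \|v\|_{L^\infty([-L,L])} \leqslant C \|v\|_{L^p([-L,L])}^{1-\theta}\,\|v\|_{H^1([-L,L])}^{\theta}
\end{equation*}
with $C$ independent of $L\geqslant 1$. Next, the embeddings from \eqref{e:Wkpbound} cost $L^{\varrho/p}$ for the $L^p$-norm and $L^{\varrho/2}$ for the $H^1$-norm, and the exponent bookkeeping
\begin{equation*}
 \frac{\varrho(1-\theta)}{p}+\frac{\varrho\theta}{2} \;=\; \frac{\varrho}{p+2}+\frac{\varrho}{p+2} \;=\; \frac{2\varrho}{p+2}
\end{equation*}
gives
\begin{equation*}
 \|v\|_{L^\infty([-L,L])} \leqslant C\, L^{2\varrho/(p+2)}\, \|v\|_{L^p_\varrho}^{1-\theta}\,\|v\|_{H^1_\varrho}^{\theta}.
\end{equation*}

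For fixed $\kappa>0$, choose $p$ large enough that $2\varrho/(p+2)\leqslant \kappa$; dividing by $L^\kappa$, taking $\sup_{L\geqslant 1}$, and applying the norm equivalence \eqref{e:equivC0} yields the pointwise-in-time estimate
\begin{equation*}
 \|B(t)\|_{C^0_\kappa} \leqslant C\, \|B(t)\|_{L^p_\varrho}^{1-\theta}\,\|B(t)\|_{H^1_\varrho}^{\theta},
\end{equation*}
from which I take $\sup_{t\in[0,T]}$ and conclude $B\in L^\infty(0,T,C^0_\kappa)$ almost surely by Lemma~\ref{lem:5.1} (using $A(0)\in L^p_\varrho$ for the chosen $p$) and Lemma~\ref{lem:apH1}. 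The corresponding statement for $A=B+\mathcal{Z}$ is then immediate from Lemma~\ref{lem:regZ}, since $C^{0,\eta}_\kappa\hookrightarrow C^0_\kappa$. For the moment bounds, raise the pointwise interpolation inequality to the $p$-th power and apply Hölder in $\omega$ with conjugate exponents $1/(1-\theta)$ and $1/\theta$, then invoke the moment bounds from the same two lemmas (enlarging $p$ as needed).

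The main technical obstacle is the precise bookkeeping of $L$-exponents: the scaling factor from the rescaled Gagliardo--Nirenberg inequality must combine with the two weight-conversion factors to produce exactly $L^{2\varrho/(p+2)}$, which can be made smaller than any prescribed $\kappa$ by increasing $p$. A secondary, routine point is that Lemma~\ref{lem:5.1} as stated only records time-integrated moments, but its proof (which controls $\partial_T\|B\|^{2p}_{L^{2p}_\varrho}$ pointwise in time by the $L^{2p+2}_\varrho$-norm of~$\mathcal{Z}$) furnishes the sup-in-time moment bound needed for the moment version of the theorem.
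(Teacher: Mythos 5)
Your argument is correct and follows essentially the same route as the paper: interpolate $\|B\|_{C^0([-L,L])}$ between $\|B\|_{L^p([-L,L])}$ and $\|B\|_{H^1([-L,L])}$, track the $L$-scaling of the constants, pay $L^{\varrho/p}$ and $L^{\varrho/2}$ via \eqref{e:Wkpbound}, and kill the resulting $L$-exponent by taking $p$ large before invoking \eqref{e:equivC0}; the paper merely routes the interpolation through $W^{\alpha,p}\hookrightarrow C^0$ and $H^1\hookrightarrow W^{1/2,p}$ with a free small parameter $\alpha$ where you use the scale-invariant Gagliardo--Nirenberg exponent $\theta=2/(p+2)$, and your observation that the sup-in-time $L^p_\varrho$ moment bound must be extracted from the proof of Lemma~\ref{lem:5.1} is the same point the paper glosses over. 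One small repair in the moment step: H\"older with conjugate exponents $1/(1-\theta)$ and $1/\theta$ puts the full $q$-th moment on $\sup_t\|B\|_{H^1_\varrho}$, of which Lemma~\ref{lem:apH1} only supplies the second; instead choose the exponents so that the $H^1_\varrho$ factor lands at power $2$ (possible precisely because $\theta q\leqslant 2$ once $p\geqslant q-2$, which is what ``enlarging $p$'' buys you).
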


\begin{proof}
For a bounded interval $I=[-1,1]$ we use for $1/2 > \alpha >1/p$ first the Sobolev embedding of  $W^{\alpha,p}(I)$ into $C^0(I)$,
then interpolation between  $W^{\alpha,p}$ spaces and finally 
the Sobolev embedding of  $H^1(I)$ into $W^{1/2 ,p}(I)$ for all $p\in(1,\infty)$ 
to obtain 
\[
\|A\|_{C^0(I)} \leqslant  C \|A\|_{W^{\alpha,p}(I)} \leqslant  C  \|A\|_{L^p(I)}^{1-2\alpha}\|A\|_{W^{1/2 ,p}(I)}^{2\alpha}
\leqslant  C  \|A\|_{L^p(I)}^{1-2\alpha}\|A\|_{H^1(I)}^{2\alpha}.
\]
Now we use the scaling for $L\geqslant 1$ in order to derive the precise scaling in the domain size $L$
of the constant in the Sobolev embedding.
\[
	\begin{split}
		 \|B(L \cdot )\|_{L^p(I)} &=   \Big(\int_I |B(L x )|^p\; dx\Big)^{1/p} =  \Big(\frac1L \int_{-L}^L |B(x )|^p\; dx\Big)^{1/p} \\
		 & =  L^{-1/p} \|B \|_{L^p([-L,L])} \;.
	\end{split}
 \]
Thus we obtain 
 \begin{align*}
   \|B(L \cdot )\|_{H^1(I)}
   & = 
 L  \|B'( L \cdot )\|_{L^2(I)} +  \|B( L \cdot )\|_{L^2(I)}\\  
 &=L^{1/2}  \|B'\|_{L^2([-L,L])} + L^{-1/2} \|B\|_{L^2(I)} \leqslant   L^{1/2}  \|B\|_{H^1([-L,L])}\;.
 \end{align*}
 Moreover,  we obtain using \eqref{e:Wkpbound}
 \begin{equation*}
\begin{split}
  \|B\|_{C^0([-L,L])} & =    \|B(L\cdot)\|_{C^0([-1,1])} \\
  &\leqslant   C  \|B(L\cdot)\|_{L^p([-1,1])}^{1-2\alpha}\|B(L\cdot)\|_{H^1([-1,1])}^{2\alpha} \\
    &\leqslant   C L^{-\frac1p(1-2\alpha)}  \|B\|_{L^p([-L,L])}^{1-2\alpha}  L^{\frac12(2\alpha)}  \|B\|_{H^1([-L,L])}^{2\alpha} \\
   &\leqslant   C L^{-\frac1p(1-2\alpha)}   L^{\alpha} L^{\varrho(1-2\alpha)/p}  \|B\|_{L^p_\varrho}^{1-2\alpha}  L^{2\varrho\alpha/2}  \|B\|_{H^1_\varrho}^{2\alpha} \\
  &\leqslant   C L^{\frac1p(1-2\alpha)(\varrho-1)} L^{\alpha(1+\varrho)}   \|B\|_{L^p_\varrho}^{1-2\alpha}   \|B\|_{H^1_\varrho}^{2\alpha}  \;.
\end{split}
 \end{equation*}
Now we can first choose $\alpha >0$ small and  then $p>1/\alpha$ sufficiently large,  so that for any given $\kappa>0$ we have a $C>0$
so that 
\begin{equation*} 
\|B\|_{C^0([-L,L])} 
\leqslant 
 C L^\kappa  \|B\|_{L^p_\varrho}^{1-2\alpha}   \|B\|_{H^1_\varrho}^{2\alpha} \;.
\end{equation*}
Thus the claim for $B$ follows from the equivalent definition of the $C^0_\kappa$-norm (see~\eqref{e:equivC0}).
For $A=B-\mathcal{Z}$ we just use the fact that the stochastic convolution $\mathcal{Z}$ is more regular.

We can get the bounds for all sufficiently large moments from 
\[\|B\|_{C^0_\kappa} 
\leqslant  C  \|B\|_{L^p_\varrho}^{1-2\alpha}   \|B\|_{H^1_\varrho}^{2\alpha}
\]
by carefully choosing $\alpha>0$ sufficiently small and taking the supremum in $[0,T]$,
as we have any moments in $L^p_\varrho$ and second moments in $H^1_\varrho$.
\end{proof}
For H\"older continuity of $B$ and thus $A$, we need to proceed with the bootstrap argument
and show $W^{1,2p}_\varrho$-regularity for $B$ first. 
We omit the proof in this case, too. It is based on standard energy estimates, but now for $\|B'\|^{2p}_{L^{2p}}$.
\begin{lemma}
\label{lem:apW1p}
Let $A$ be such that $B=A-\mathcal{Z}$ is a weak solution of \eqref{e:GLt} given for $T>0$ by Theorem \ref{thm:exGL}.
If $\varrho>2$, $p>2$,  $A(0)\in W^{1,2p}_\varrho$ and
$A(0)\in L^{6p}_\varrho$,
then  
\[ 
B= A - \mathcal{Z} \in L^\infty(0,T, W^{1,2p}_\varrho)\;.
\]
Moreover for all $p>2$ there exists a constant $C_p$ such that
\[
\mathbb{E}[\sup_{[0,T]}\|B\|^{2p}_{W^{1,2p}_\varrho}]\leqslant  C_p.
\]
\end{lemma}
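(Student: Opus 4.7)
The plan is to differentiate \eqref{e:GLt} in space and run essentially the same $L^{2p}_\varrho$ energy estimate as in Lemma \ref{lem:5.1}, but now applied to $B' := \partial_X B$. Formally $B'$ satisfies
\begin{equation*}
\partial_T B' = 4 \partial_X^2 B' + \nu B' - 3 \partial_X\bigl((B+\mathcal{Z})|B+\mathcal{Z}|^2\bigr).
\end{equation*}
As in the previous lemmas, all manipulations are first carried out on the $2n$-periodic approximating sequence constructed in Theorem \ref{thm:exGL}, and one passes to the limit using weak lower semicontinuity of the norms.

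The main obstacle is the derivative of the cubic nonlinearity: expanding naively via $\partial_X(A|A|^2)=2|A|^2 A' + A^2\overline{A'}$ with $A=B+\mathcal{Z}$ brings in $\partial_X\mathcal{Z}$, which by Lemma \ref{lem:regZ} is only a distribution. I avoid this by keeping the nonlinearity in the undifferentiated form $A|A|^2$ and integrating by parts once against the natural test function $w_\varrho |B'|^{2p-2}\overline{B'}$. The derivative then falls on $w_\varrho|B'|^{2p-2}\overline{B'}$, which is a classical function involving $B$, $B'$, $B''$ only, so all derivatives are successfully moved away from $\mathcal{Z}$.

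Computing $\partial_T\|B'\|^{2p}_{L^{2p}_\varrho}$ along these lines, the dispersive contribution $4\int w_\varrho |B'|^{2p-2}\overline{B'}\,\partial_X^3 B\,dX$ (integrated by parts twice) produces a good coercive term $-c\int w_\varrho |B'|^{2p-2}|B''|^2\,dX$, while the commutators involving $w_\varrho'$ and $w_\varrho''$ are harmless thanks to \eqref{eq:propweight} and can be reabsorbed using Young's inequality. The nonlinear term, after the integration by parts described above, is dominated in modulus by
\begin{equation*}
C\int w_\varrho\bigl(|B'|^{2p-1}+|B'|^{2p-2}|B''|\bigr)|A|^3 \, dX.
\end{equation*}
A first Young's inequality $|B'|^{2p-2}|B''||A|^3\leqslant \epsilon|B'|^{2p-2}|B''|^2 + C|B'|^{2p-2}|A|^6$ absorbs the $|B''|$ factor into the coercive term; a second Young's then bounds each of the remaining pieces by $C|B'|^{2p}+C|A|^{6p}$. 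The net result is the differential inequality
\begin{equation*}
\partial_T \|B'\|^{2p}_{L^{2p}_\varrho} \leqslant C\|B'\|^{2p}_{L^{2p}_\varrho} + C\|A\|^{6p}_{L^{6p}_\varrho}.
\end{equation*}

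To close the argument, note that $B(0)=A(0)$ so $B'(0)\in L^{2p}_\varrho$ by hypothesis, and Lemma \ref{lem:5.1} applied with $q=6p$ (permitted since $A(0)\in L^{6p}_\varrho$) ensures that $s\mapsto \|A(s)\|^{6p}_{L^{6p}_\varrho}$ is almost surely integrable on $[0,T]$ with finite expectation. Gronwall's inequality then yields the pathwise $L^\infty(0,T,L^{2p}_\varrho)$-bound for $B'$, and combined with the $L^\infty(0,T,L^{2p}_\varrho)$-bound for $B$ from Lemma \ref{lem:5.1} gives the claimed $W^{1,2p}_\varrho$-regularity. Taking expectations of the Gronwall bound and using the $L^{6p}_\varrho$-moment estimate from Lemma \ref{lem:5.1} delivers the moment bound $\mathbb{E}[\sup_{[0,T]}\|B\|^{2p}_{W^{1,2p}_\varrho}]\leqslant C_p$.
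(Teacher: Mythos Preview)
Your proposal is correct and follows essentially the same route as the paper: both compute $\partial_T\|B'\|^{2p}_{L^{2p}_\varrho}$, handle the $B'''$ term by one integration by parts to extract the coercive piece $-c\int w_\varrho|B'|^{2p-2}|B''|^2$, and—crucially—treat the cubic by keeping it as $(|A|^2A)'$ and integrating by parts against $w_\varrho|B'|^{2p-2}\overline{B'}$ so that no $\partial_X\mathcal{Z}$ appears, then close with Young and Gronwall using the $L^{6p}_\varrho$ control from Lemma~\ref{lem:5.1}. Your write-up in fact makes the motivation for the integration by parts (avoiding $\mathcal{Z}'$) more explicit than the paper does.
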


Now we turn to the regularity in weighted H\"older spaces defined in 
\eqref{def:wHoe}.

\begin{theorem}\label{thm:whreg}
Let $B=A-\mathcal{Z}$ with $A$ a weak solution of \eqref{e:GLt} given for $T>0$ by Theorem \ref{thm:exGL}. 
If for some $\varrho>2$ and all sufficiently large $p\geqslant 2$ we have  $A(0)\in W^{1,p}_\varrho $   
then  for all $\eta\in(0,1)$
\[ 
  B \in L^\infty(0,T,C^{0,\eta}_\kappa ) \quad \text{for all sufficiently  small }\kappa>0\;.
\]
Moreover, for all $p>0$ there exists a constant $C_p$ such that
\[
\mathbb{E}[\sup_{[0,T]}\|B\|^{p}_{C^{0,\eta}_\kappa}]\leqslant  C_p.
\]
\end{theorem}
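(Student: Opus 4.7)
The plan is to combine the $W^{1,2p}_\varrho$-regularity from Lemma~\ref{lem:apW1p} with the sup-norm bound from Theorem~\ref{thm:apHoeld} via a one-dimensional Morrey embedding and an interpolation step, the latter performed on the unit interval $I=[-1,1]$ rather than directly on $[-L,L]$. This placement is the crux of the argument: the $L^\eta$ factor that arises from rescaling the H\"older seminorm cancels exactly against the corresponding factor on the $C^{0,\eta_p}$-side of the interpolation, avoiding a loss of $L^{1-\eta}$ that a naive estimate would produce.

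Given $\eta\in(0,1)$, first I fix $p$ with $\eta_p:=1-1/(2p)>\eta$ and invoke the one-dimensional Morrey inequality $[u]_{C^{0,\eta_p}(I)}\leqslant C_p\|u'\|_{L^{2p}(I)}$, with constant depending only on $p$. Combining with the trivial $|u(x)-u(y)|\leqslant 2\|u\|_{L^\infty(I)}$ gives
\[
[u]_{C^{0,\eta}(I)}\leqslant C\,[u]_{C^{0,\eta_p}(I)}^{\eta/\eta_p}\|u\|_{L^\infty(I)}^{1-\eta/\eta_p}\;.
\]
Applying this to the rescaled function $B_L(x):=B(Lx)$ on $I$ and using the scaling identities $[B_L]_{C^{0,\eta}(I)}=L^\eta[B]_{C^{0,\eta}([-L,L])}$ and $[B_L]_{C^{0,\eta_p}(I)}=L^{\eta_p}[B]_{C^{0,\eta_p}([-L,L])}$, the $L^\eta$-factors cancel and I obtain
\[
[B]_{C^{0,\eta}([-L,L])}\leqslant C\,[B]_{C^{0,\eta_p}([-L,L])}^{\eta/\eta_p}\|B\|_{L^\infty([-L,L])}^{1-\eta/\eta_p}\;.
\]

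Next, Morrey applied directly on $[-L,L]$ (again with an $L$-independent constant) gives $[B]_{C^{0,\eta_p}([-L,L])}\leqslant C\|B'\|_{L^{2p}([-L,L])}$, and then \eqref{e:Wkpbound} together with Lemma~\ref{lem:apW1p} yields $\|B'\|_{L^{2p}([-L,L])}\leqslant CL^{\varrho/(2p)}\|B\|_{W^{1,2p}_\varrho}$. For the $L^\infty$-factor, Theorem~\ref{thm:apHoeld} furnishes $\|B\|_{L^\infty([-L,L])}\leqslant CL^{\kappa_1}\|B\|_{C^0_{\kappa_1}}$ for any $\kappa_1>0$. The overall exponent of $L$ becomes $\tfrac{\varrho\eta}{2p\eta_p}+\kappa_1(1-\eta/\eta_p)$, which for any prescribed $\kappa>0$ can be made at most $\kappa$ by first choosing $p$ so large that the first summand is below $\kappa/2$ (possible since $\eta_p\to 1$) and then $\kappa_1$ sufficiently small. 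Adding the sup-norm contribution from Theorem~\ref{thm:apHoeld} for the $\|B\|_{C^0([-L,L])}$-part of the full H\"older norm, and invoking the equivalent characterization \eqref{def:wHoe} of $\|\cdot\|_{C^{0,\eta}_\kappa}$, produces the claimed bound. The moment estimate follows from H\"older's inequality applied to the product of norms, using the moment bounds already provided by Lemma~\ref{lem:apW1p} and Theorem~\ref{thm:apHoeld}.

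The main obstacle is precisely the interplay between H\"older rescaling and weighted norms sketched above. Interpolating on $[-L,L]$ directly would leave behind an unavoidable $L^{1-\eta}$ factor, forcing $\kappa\geqslant 1-\eta$ and thus contradicting the claim of arbitrarily small weights. Interpolating on the fixed interval $I$ first is the trick that makes the $L^\eta$-factors cancel out; the price paid is that one has to exploit $W^{1,2p}_\varrho$-regularity with $p$ growing as $\eta\uparrow 1$, which is exactly why the hypothesis on $A(0)$ demands all sufficiently large $p$.
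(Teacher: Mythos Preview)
Your proof is correct and shares the paper's central insight --- that the relevant interpolation and embedding constants can be made $L$-independent --- but you implement it with different ingredients. The paper interpolates at the Sobolev level, bounding $\|B\|_{W^{\alpha,p}(I)}\leqslant C\|B\|_{L^p(I)}^{1-\alpha}\|B\|_{W^{1,p}(I)}^{\alpha}$ and then invoking the fractional embedding $W^{\alpha,p}\hookrightarrow C^{0,\alpha-1/p}$; after rescaling, all $L$-powers cancel exactly to give $\|B\|_{C^{0,\varsigma}[-L,L]}\leqslant \|B\|_{W^{1,p}[-L,L]}^\alpha\|B\|_{L^p[-L,L]}^{1-\alpha}+\|B\|_{C^0[-L,L]}$, and only then are the weighted norms substituted via \eqref{e:Wkpbound}, producing a single factor $L^{\varrho/p}$. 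You instead interpolate at the H\"older level between $C^{0,\eta_p}$ (obtained from the one-dimensional Morrey inequality with constant~$1$) and $L^\infty$ (supplied by Theorem~\ref{thm:apHoeld}). Your route is a bit more elementary in that it avoids fractional Sobolev spaces entirely; the paper's route is slightly cleaner in that the seminorm part uses only $L^p_\varrho$ and $W^{1,p}_\varrho$ and does not need to feed the $C^0_\kappa$-bound through the interpolation. One minor remark: the H\"older interpolation $[u]_{C^{0,\eta}}\leqslant C[u]_{C^{0,\eta_p}}^{\eta/\eta_p}\|u\|_\infty^{1-\eta/\eta_p}$ is a pointwise inequality and therefore holds on any interval with the same constant, so the detour via $B_L$ on $I$ is not strictly necessary in your argument --- the cancellation you highlight is automatic.
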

Using Lemma~\ref{lem:regZ} for the regularity of $\mathcal{Z}$ we obtain:
\begin{corollary}
\label{cor:maxregA}
 Under the assumptions of the previous theorem for all $\eta\in(0,1/2)$
\[ 
  A \in L^\infty(0,T,C^{0,\eta}_\kappa ) \quad \text{for all sufficiently  small }\kappa>0\;.
\]
Moreover, for all $p>0$ there exists a constant $C_p$ such that
\[
\mathbb{E}[\sup_{[0,T]}\|A\|^{p}_{C^{0,\eta}_\kappa}]\leqslant  C_p.
\]
\end{corollary}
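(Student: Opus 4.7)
The plan is to exploit the decomposition $A = B + \mathcal{Z}$ and combine the two regularity results that are now at our disposal: Theorem \ref{thm:whreg} gives the H\"older regularity of $B$ up to any exponent less than $1$, while Lemma \ref{lem:regZ} gives the H\"older regularity of $\mathcal{Z}$ up to any exponent less than $1/2$. Since $A$ inherits the weaker of the two regularities, the upper bound $\eta < 1/2$ in the statement is forced by $\mathcal{Z}$.

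\textbf{Step 1.} Fix $\eta \in (0,1/2)$ and a small $\kappa > 0$. The weighted H\"older norm $\|\cdot\|_{C^{0,\eta}_\kappa}$ defined in~\eqref{def:wHoe} is genuinely a seminorm on each $[-L,L]$ (so the $\sup$ over $L$ also yields a seminorm), hence the triangle inequality gives
\[
\|A(t)\|_{C^{0,\eta}_\kappa} \;\leqslant\; \|B(t)\|_{C^{0,\eta}_\kappa} + \|\mathcal{Z}(t)\|_{C^{0,\eta}_\kappa}
\]
for every $t \in [0,T]$.

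\textbf{Step 2.} Taking the supremum in $t$ and applying Theorem~\ref{thm:whreg} (which applies since $\eta < 1$, with a small $\kappa$ permissible by that theorem) together with Lemma~\ref{lem:regZ} (which applies since $\eta < 1/2$, again for arbitrarily small $\kappa$) shows that $\mathbb{P}$-almost surely
\[
\sup_{t\in[0,T]}\|A(t)\|_{C^{0,\eta}_\kappa} \;<\; \infty,
\]
i.e.\ $A \in L^\infty(0,T, C^{0,\eta}_\kappa)$. Here one just has to take the smaller of the two $\kappa$'s produced by the two results to have everything in the same space.

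\textbf{Step 3.} For the moment bound, fix $p>0$. The elementary inequality $(a+b)^p \leqslant 2^{p}(a^p + b^p)$ combined with Step 1 yields
\[
\mathbb{E}\Bigl[\sup_{[0,T]}\|A\|^{p}_{C^{0,\eta}_\kappa}\Bigr]
\;\leqslant\; 2^{p}\Bigl(\mathbb{E}\bigl[\sup_{[0,T]}\|B\|^{p}_{C^{0,\eta}_\kappa}\bigr] + \mathbb{E}\bigl[\sup_{[0,T]}\|\mathcal{Z}\|^{p}_{C^{0,\eta}_\kappa}\bigr]\Bigr),
\]
and the two expectations on the right are bounded by the moment estimates of Theorem~\ref{thm:whreg} and Lemma~\ref{lem:regZ} respectively.

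There is no real obstacle here; this is a routine triangle-inequality argument. The only point requiring a hint of care is the compatibility of the parameter ranges for $\kappa$, $\eta$ and $\varrho$ supplied by the two ingredients, and this is transparent since both statements allow $\kappa$ arbitrarily small and the hypothesis $A(0) \in W^{1,p}_\varrho$ for all sufficiently large $p$ (together with $\varrho > 2$) is the same in both places.
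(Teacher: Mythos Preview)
Your proof is correct and follows exactly the approach the paper indicates: the paper's justification for the corollary is the single sentence ``Using Lemma~\ref{lem:regZ} for the regularity of $\mathcal{Z}$ we obtain,'' and you have spelled out precisely this triangle-inequality argument combining Theorem~\ref{thm:whreg} for $B$ with Lemma~\ref{lem:regZ} for $\mathcal{Z}$.
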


\begin{proof}(Proof of Theorem~\ref{thm:whreg})
We proceed
by using the Sobolev embedding of $W^{\alpha,p}([-L,L])$ into $C^{0,\alpha-1/p}([-L,L])$ for $p+1>\alpha p>1$ and then an interpolation inequality.
As before, we need to take care of the scaling of the constants with respect to $L$. Recall that $I=[-1,1]$. First,
\begin{equation*}
\|B\|_{C^{0,\alpha-1/p}(I)} 
\leqslant  C\|B\|_{W^{\alpha,p}(I)}
\leqslant  C\|B\|^{1-\alpha}_{L^p(I)}\|B\|^{\alpha}_{W^{1,p}(I)}\;.
\end{equation*}
Rescaling for $L\geqslant1$ yields
\begin{equation*}
\begin{split}
\|B\|_{C^{0,\eta}[-L,L]}&=\sup_{\xi,\zeta\in [-L,L]}\frac{|B(\xi)-B(\zeta)|}{|\xi-\zeta|^\eta}+\|B\|_{C^0[-L,L]}\\
&=\sup_{x,y\in I} \frac{|B(xL)-B(yL)|}{|x-y|^\eta L^\eta}+\|B(\cdot L)\|_{C^0(I)}\\
 &\leqslant  L^{-\eta}\|B(\cdot L)\|_{C^{0,\eta}(I)} + \|B(\cdot L)\|_{C^0(I)} \;.
\end{split}
\end{equation*}
Now we take $\formereta=\alpha-1/p$ (recall $\alpha p>1$) and interpolate:
\begin{equation*}
\|B\|_{C^{0,\formereta}[-L,L]} \leqslant  L^{-\formereta} \|B(\cdot L)\|^\alpha_{W^{1,p}(I)}\cdot\|B(\cdot L)\|^{1-\alpha}_{L^{p}(I)}  + \|B(\cdot L)\|_{C^0(I)} \;.
\end{equation*}
Now we rescale back all the norms to the original length scale. For the first one we obtain
\begin{equation*}
\begin{split}
 \|B(\cdot L)\|^p_{W^{1,p}(I)}&= L^p\int_I |B'(Lx)|^p dx + \int_I|B(Lx)|^p dx\\
 &= L^{p-1}\int_{-L}^{L}|B'|^p dz + \frac{1}{L} \int_{-L}^{L}|B|^p dx
 \leqslant  L^{p-1}\|B\|^p_{W^{1,p}[-L,L]}
\end{split}
\end{equation*}
and thus
\[
  \|B(\cdot L)\|^\alpha_{W^{1,p}(I)}\leqslant  L^{\frac{p-1}{p}\alpha}\|B\|^\alpha_{W^{1,p}[-L,L]} \;.
\]
For the second norm in the interpolated part we have, by a substitution,
\[
\|B(\cdot L)\|^p_{L^p(I)}\leqslant  L^{-1}\|B\|^p_{L^p[-L,L]}
\quad \text{and}\quad
  \|B(\cdot L)\|^{1-\alpha}_{L^{p}(I)} \leqslant  L^{-\frac{1-\alpha}{p}}\|B\|^{1-\alpha}_{L^{p}[-L,L]} \;.
\]
We can now put these estimates together and derive
\begin{equation*}
  \begin{split}
	\| B \|_{C^{0, \formereta} [- L, L]} 
	& \leqslant  L^{- \formereta}  \| B (\cdot L)\|^{\alpha}_{W^{1, p}(I)}  \| B (\cdot L) \|^{1 - \alpha}_{L^p(I)}+ \|B(\cdot L)\|_{C^0[-1,1]}\\
	& \leqslant  L^{- \formereta} L^{\frac{p - 1}{p} \alpha} \| B\|^{\alpha}_{W^{1, p} [- L, L]} \cdot L^{- \frac{1 - \alpha}{p}} \| B \|^{1- \alpha}_{L^p [- L, L]} + \|B\|_{C^0[-L,L]}\\
	& =  \| B \|^{\alpha}_{W^{1, p} [- L, L]} \cdot \| B \|^{1 - \alpha}_{L^p[- L, L]} + \|B\|_{C^0[-L,L]}
	\end{split}
\end{equation*}
with $\formereta = \alpha -\frac{1}{p}$, 
where $\formereta \in (0, 1)$, $\alpha \in (\formereta, 1)$ and $p > 1/\alpha$ sufficiently large. It is somewhat remarkable here that the constant above is $1$ and thus independent of $L$.

Now by \eqref{e:Wkpbound} we can change to the weighted space to
obtain
\begin{equation*}
\begin{split}
	\| B \|_{C^{0, \formereta} [- L, L]} 
	& \leqslant  \| B \|^{\alpha}_{W^{1, p} [-L, L]}  \| B \|^{1 - \alpha}_{L^p [- L, L]} + \|B\|_{C^0[-L,L]}\\
	& \leqslant   \| B \|^{\alpha}_{W^{1, p}_{\varrho}}  \| B \|^{1 -\alpha}_{L^p_{\varrho}} L^{\varrho / p} + \|B\|_{C^0[-L,L]} \;.
\end{split}
\end{equation*}
Now  for $\formereta \in (0, 1)$, $\formereta =
\alpha - \frac{1}{p}$, $\alpha \in (\formereta, 1)$ and $p > 1/\alpha$,
using the definition of the weighted H\"older norms from \eqref{def:wHoe} 
and the equivalent representation of the $C^0_\kappa$-norm (see \eqref{e:equivC0})  we  derive
\begin{equation*}
\begin{split}
\|B\|_{C^{0,\formereta}_\kappa} &= \sup_{L\geqslant1}\{L^{-\kappa}\|B\|_{C^{0,\formereta}[-L,L]}\}\\
&\leqslant   \sup_{L\geqslant1}\{L^{\varrho/p-\kappa}\}\| B
\|^{\alpha}_{W^{1, p}_{\varrho}}  \| B \|^{1 - \alpha}_{L^p_{\varrho}} + \|B\|_{C^0_\kappa},
\end{split}
\end{equation*}
and as soon as we choose $p$ large enough that $\varrho/p-\kappa\leqslant  0$ we have finished the proof.
Once again the bounds on all the moments follow easily, as we have all moments of the terms on the right hand side.
\end{proof}


\section{Residual}
\label{sec:res}

Define the approximation\footnote{recall that $c.c.$ denotes the complex conjugate}
\begin{equation}
\label{def:uA}
u_A(t,x) = \varepsilon A(\varepsilon^2 t, \varepsilon x)\e^{ix} + c.c. \;,
\end{equation}
 where $A$ is both a weak and a mild solution of the amplitude equation given by 
\begin{equation}
 \label{e:GLmild}
 A(T) =  \e^{(4\partial_X^2+\nu) T } A(0) - 3\int_0^T  \e^{(4\partial_X^2+\nu)(T-s) } A|A|^2(s) ds + \mathcal{Z}(T) \;.
\end{equation}
Define the residual
\begin{equation}
 \label{e:residual}
 \text{Res}(\varphi)(t) = \varphi(t) - \e^{t\cL_\nu}\varphi(0)  + \int_0^t \e^{(t-s)\cL_\nu} \varphi(s)^3 ds + \varepsilon^{3/2} W_{\cL_\nu} (t) \;,
\end{equation}
which measures how close $u_A$ is to being a mild solution of Swift-Hohenberg.
In this section we bound $\text{Res}(u_A)$. 
This is a key result to prove the error estimate later. 
First we plug in the definition of $u_A$ to obtain
\begin{align*}
\text{Res}(u_A)(t,\cdot) 
 = {}& \varepsilon \Big[A( \varepsilon^2 t, \varepsilon \cdot)\e_1 - \e^{t\cL_\nu}[A(0,\varepsilon \cdot)\e_1] 
\\&
+  3\varepsilon^2 \int_0^t \e^{(t-s)\cL_\nu} A|A|^2(\varepsilon^2 s ,\varepsilon \cdot) \e_1 ds \Big]\\
& + \varepsilon^3 \int_0^t \e^{(t-s)\cL_\nu}A^3(\varepsilon^2 s , \varepsilon \cdot )\e_3  ds
+ c.c. - \varepsilon^{3/2} W_{\cL_\nu} (t,\cdot)\;,
\end{align*}
where we used the notation 
\[
\e_n(x)=\e^{inx}\;.
\]

Rescaling to the slow timescale, we find
\begin{equation}
\label{e:Res1}
\begin{split}
   \text{Res}(u_A)(T\varepsilon^{-2}, \cdot) 
   = {}& \varepsilon A( T,\varepsilon \cdot )\e_1 - \varepsilon \e^{T\varepsilon^{-2}\cL_\nu}[A(0,\varepsilon \cdot)\e_1]  
   \\ &
  +  \varepsilon \int_0^{T} \e^{(T-s)\varepsilon^{-2}\cL_\nu} \left[A^3(s, \varepsilon \cdot)\e_3 + 3A|A|^2(s, \varepsilon \cdot )  \e_1\right] ds 
    \\ &
    + c.c. - \varepsilon^{3/2} W_{\cL_\nu} (T\varepsilon^{-2},\cdot)\;.
    \end{split}
\end{equation}
Now we need to transform this to obtain the mild formulation \eqref{e:GLmild}.
This will remove all the terms of order $\mathcal{O}(\varepsilon)$.


\subsection{Stochastic Convolution}


The stochastic convolution in \eqref{e:Res1} can be replaced by (see \cite{BB:15})
\[ 
\varepsilon^{1/2} W_{\cL_\nu} (t,x)  \approx  \mathcal{Z}(\varepsilon^2 t,\varepsilon x)e_1 +c.c.  \quad\text{uniformly on $[0,T\varepsilon^{-2}]$ in } C^0_\gamma  \;.
\]
Here we need that $\mathcal{W}$ is defined as a suitable rescaling of the Fourier-transform of $W$ around $1$ in Fourier space. 
For details we refer to \cite{BB:15}. Here we always assume that $\mathcal{W}$ is a complex-valued standard cylindrical Wiener process such 
that the approximation result for the stochastic convolution holds.
The precise error estimate from \cite{BB:15} is 
\begin{theorem}
For all $T>0$, for all $\kappa >0$, for all $p>1$ and all sufficiently small  $\gamma>0$ 
there is a constant $C>0$ such that
\begin{equation*}
\mathbb{P}\Big(\sup_{[0,T\varepsilon^{-2}]}
\|\varepsilon^{ 1 / 2} W^{(\varepsilon)}_{\mathcal{L}_{\varepsilon}} (t, x) - [ 
    \mathcal{Z} (t \varepsilon^2, x \varepsilon) e_1 +
   \textrm{c.c}.] \|_{C^0_{\gamma}}\geqslant\varepsilon^{1-\kappa}\Big)\leqslant  C \varepsilon^p
\end{equation*}
for all $\varepsilon\in (0,\varepsilon_0)$.
\end{theorem}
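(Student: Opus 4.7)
The approach is spectral: the Swift-Hohenberg symbol $\widehat{\cL_\nu}(k) = -(1-k^2)^2 + \nu\varepsilon^2$ is close to zero only in an $\mathcal{O}(\varepsilon)$-neighborhood of the critical wave numbers $k = \pm 1$, and setting $k = \pm 1 + \varepsilon K$ and Taylor expanding gives
\[
\widehat{\cL_\nu}(\pm 1 + \varepsilon K) = \varepsilon^2(-4K^2 + \nu) + \mathcal{O}(\varepsilon^3 K^3 + \varepsilon^4 K^4).
\]
After the time rescaling $t = T\varepsilon^{-2}$ the leading piece matches the Ginzburg-Landau symbol $-4K^2 + \nu$ acting on the rescaled variable $K$, which is precisely why $\mathcal{Z}(t\varepsilon^2, x\varepsilon)e_1 + c.c.$ is the correct ansatz.

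First I would decompose $W_{\cL_\nu}$ in Fourier space via a smooth partition of unity isolating the two critical bands $|k \mp 1| \leqslant \delta$ from the rest. On the complementary set the symbol is bounded above by a negative constant, so the corresponding piece of $\varepsilon^{1/2} W_{\cL_\nu}$ is a centered Gaussian whose pointwise variance is of order $\varepsilon$ uniformly in $t$ and decays in $|x|$ fast enough to be controlled in $C^0_\gamma$; a Gaussian concentration bound then makes this part $\mathcal{O}(\varepsilon^{1-\kappa})$ in $C^0_\gamma$ with probability $1 - \mathcal{O}(\varepsilon^p)$ for any prescribed $p$.

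Second, on each critical band I would change variables $k = \pm 1 + \varepsilon K$. The scaling property of space-time white noise ($dW$ rescaled by $\varepsilon^{1/2}$ yields again space-time white noise in $K$ in distribution) combined with the Taylor expansion of the symbol rewrites the localized convolution as $\mathcal{Z}(T,X)e^{\pm ix}$ plus a Taylor remainder. The remainder carries an extra factor of $\varepsilon$ from the symbol expansion, and after integration against the truncated white noise contributes an error of order $\varepsilon^{1-\kappa}$ in $C^0_\gamma$, with the precise loss $\kappa$ accounting for the truncation threshold $\delta = \delta(\varepsilon)$ chosen to balance the two error sources.

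The main obstacle is upgrading these pointwise variance estimates to a uniform bound on the long time interval $[0,T\varepsilon^{-2}]$ in the weighted space $C^0_\gamma$. For this I would use the standard machinery for Gaussian processes: the factorization method of Da Prato, Kwapie\'n and Zabczyk to introduce enough temporal regularity, and Kolmogorov's continuity theorem on dyadic intervals in space, with the polynomial weight $w_\gamma(x) = (1+x^2)^{-\gamma/2}$ ensuring that the sum over spatial scales converges (this is where the smallness $\gamma > 0$ is needed, to absorb the slow spatial growth generated by integration of space-time white noise on the long time scale). Since every process involved is Gaussian, arbitrarily high moments of the spatial and temporal increments of the difference are explicitly computable; Chebyshev's inequality applied at a sufficiently high moment then converts the moment bounds into the desired tail $C\varepsilon^p$, with any prescribed $p$ matched by taking the moment order large and tightening $\gamma$ and $\kappa$ accordingly.
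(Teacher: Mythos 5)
First, on the comparison: the paper does not actually prove this theorem --- it is quoted verbatim from \cite{BB:15} and only reformulated in Lemma \ref{lem:stoch} --- so the only internal material to measure your plan against is the Fourier machinery of Section 5. Your architecture (a smooth partition of unity isolating the bands $|k\mp1|\leqslant\delta$, Taylor expansion of the symbol $-(1-k^2)^2+\nu\varepsilon^2$ at $k=\pm1+\varepsilon K$, rescaling of the white noise, the factorization method plus a Kolmogorov/Sobolev-embedding argument for uniformity in $t$ and $x$, and Gaussian moments plus Chebyshev for the tail) is indeed the strategy of \cite{BB:15}, and it is consistent with the projections $\widehat{P}_\ell$ and the kernel estimates the paper itself uses for the exchange lemmas.

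The genuine gap is quantitative and sits exactly where the claimed rate must be produced, namely in your treatment of the off-critical part. You correctly compute that its pointwise variance is of order $\varepsilon$ (it is $\varepsilon\int_{|k\mp1|>\delta}(2|\lambda(k)|)^{-1}dk=O(\varepsilon/\delta)$), but you then conclude that this piece is $\mathcal{O}(\varepsilon^{1-\kappa})$: a centred Gaussian with variance of order $\varepsilon$ has typical size $\varepsilon^{1/2}$, and high-moment Gaussian concentration only costs a further $\varepsilon^{-\kappa}$ --- it does not square the standard deviation. Nor can this order be improved by tuning $\delta$ or by exploiting the natural coupling of the noises: at, say, $k=0$ the Swift--Hohenberg symbol equals $-1+\nu\varepsilon^2$ while the translated Ginzburg--Landau symbols $-4(k\mp1)^2+\nu\varepsilon^2$ equal $-4+\nu\varepsilon^2$, so the corresponding Ornstein--Uhlenbeck modes do not cancel and contribute variance $\Theta(\varepsilon)$ to the difference at every fixed $x$, and $\|\cdot\|_{C^0_\gamma}$ dominates the value at $x=0$. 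Your argument therefore yields the estimate with threshold $\varepsilon^{1/2-\kappa}$, not $\varepsilon^{1-\kappa}$. That weaker rate is in fact what the rest of the paper consumes: in the residual the stochastic error enters multiplied by $\varepsilon$, and $\varepsilon\cdot\varepsilon^{1/2-\kappa}$ matches the $\mathcal{O}(\varepsilon^{3/2-2\kappa})$ bound of Theorem \ref{thm:res}, whereas the quoted $\varepsilon^{1-\kappa}$ is also out of step with Lemma \ref{lem:stoch}, where $E_s=\varepsilon\bigl(\varepsilon^{1/2}W_{\cL_\nu}-[\mathcal{Z}e_1+c.c.]\bigr)$ is asserted to be only $\mathcal{O}(\varepsilon^{1-\kappa})$. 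So either exhibit a cancellation that removes the $O(\varepsilon^{1/2})$ off-band mismatch (I do not believe one exists), or prove and use the $\varepsilon^{1/2-\kappa}$ version after checking the normalization against \cite{BB:15}; as written, the step ``variance $\varepsilon$, hence size $\varepsilon^{1-\kappa}$'' is where the proof fails.
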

We use the following shorthand notation in order to reformulate this lemma.
\begin{definition} 
	\label{def:0}
	We say that a real valued stochastic process $X$ is 
 $\mathcal{O}(f_\varepsilon)$ with high probability, if there is a constant $C>0$ such that for all $p>1$ there is a constant $C_p>0$ such that for $\varepsilon\in(0,1)$
\[ \mathbb{P}\Big(\sup_{[0,T]} |X|  \geqslant C f_\varepsilon\Big) \leqslant  C_p\varepsilon^p\;.
\]
\end{definition}
\begin{lemma} 
\label{lem:stoch}
We can write
	\[
		\varepsilon^{3/2} W_{\cL_\nu} (t,\cdot) = \varepsilon \mathcal{Z}(\varepsilon^2 t, \varepsilon\cdot)\e_1 +c.c. + E_s(t)
	\]
	where $\|E_s(\varepsilon^{-2}\cdot)\|_{C^0_{\gamma}}=\mathcal{O}(\varepsilon^{1-\kappa})$  for any $\kappa>0$ in the sense of the previous definition.
\end{lemma}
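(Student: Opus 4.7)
This lemma is essentially a reformulation of the theorem from \cite{BB:15} cited immediately above, recast in the high-probability language of Definition~\ref{def:0}, so the plan is short. First I would simply define
\[
E_s(t) := \varepsilon^{3/2} W_{\cL_\nu}(t,\cdot) - \varepsilon\bigl[\mathcal{Z}(\varepsilon^2 t, \varepsilon\cdot)\e_1 + \text{c.c.}\bigr],
\]
so that the identity in the lemma becomes a tautology and the only substantive claim is the size estimate for $E_s$. Factoring out a single $\varepsilon$ rewrites this as
\[
E_s(t) = \varepsilon\Bigl(\varepsilon^{1/2} W_{\cL_\nu}(t,\cdot) - \bigl[\mathcal{Z}(\varepsilon^2 t, \varepsilon\cdot)\e_1 + \text{c.c.}\bigr]\Bigr),
\]
so $E_s$ is exactly $\varepsilon$ times the quantity whose $C^0_\gamma$-norm is controlled in the preceding theorem.

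Next I would apply that theorem: for any fixed $\kappa>0$, any $p>1$ and sufficiently small $\gamma>0$, the $C^0_\gamma$-norm of the bracketed difference is bounded by $\varepsilon^{1-\kappa}$ uniformly on $[0, T\varepsilon^{-2}]$, outside an event of probability at most $C\varepsilon^p$. Multiplying through by the prefactor yields
\[
\sup_{t\in[0,T\varepsilon^{-2}]} \|E_s(t)\|_{C^0_\gamma} \leqslant \varepsilon^{2-\kappa}
\]
on the same large-probability event. Since $\varepsilon<1$ the bound $\varepsilon^{2-\kappa}\leqslant \varepsilon^{1-\kappa}$ is more than enough for the claimed $\mathcal{O}(\varepsilon^{1-\kappa})$, and the factor of $\varepsilon$ to spare is harmless (the weaker rate is what matches the later error estimates in Section~\ref{sec:res}).

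Finally I would translate this into the notation of Definition~\ref{def:0}. The expression $\|E_s(\varepsilon^{-2}\cdot)\|_{C^0_\gamma}$ means the supremum over $T\in[0,T_0]$ of $\|E_s(T\varepsilon^{-2})\|_{C^0_\gamma}$, which coincides with $\sup_{t\in[0,T_0\varepsilon^{-2}]}\|E_s(t)\|_{C^0_\gamma}$, and the probabilistic tail $\mathbb{P}(\cdot\geqslant Cf_\varepsilon)\leqslant C_p \varepsilon^p$ required by Definition~\ref{def:0} is precisely what the theorem delivers (with $\kappa$ chosen small and $p$ free).

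There is no real obstacle here, since the hard work, a Kolmogorov-continuity type argument comparing the rescaled stochastic convolution for $\cL_\nu$ with the modulated convolution $\mathcal{Z}(\varepsilon^2 t,\varepsilon x)\e_1 + \text{c.c.}$, has already been done in \cite{BB:15}; the only subtlety one has to check carefully is the matching of normalisations (a single prefactor of $\varepsilon$ between the two statements) and that the notation $W_{\cL_\nu}$ used here agrees, after the relevant rescaling, with the object $W^{(\varepsilon)}_{\mathcal{L}_\varepsilon}$ appearing in the cited theorem. Once that bookkeeping is done, the lemma is immediate.
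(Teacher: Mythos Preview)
Your proposal is correct and mirrors the paper's treatment exactly: the paper gives no separate proof of this lemma, presenting it purely as a restatement of the cited theorem from \cite{BB:15} in the high-probability language of Definition~\ref{def:0}. Your bookkeeping on the extra factor of $\varepsilon$ (yielding $\varepsilon^{2-\kappa}$, which is stronger than the stated $\varepsilon^{1-\kappa}$) is also correct and worth noting.
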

%


\subsection{Exchange Lemmas and estimates for the residuals}


Let us now come back to \eqref{e:Res1}. In the following we present lemmas to exchange the Swift-Hohenberg 
semigroup generated by $\cL_\nu=\cL_0 + \varepsilon^2 \nu$ with 
the Ginzburg-Landau semigroup generated by $\Delta_\nu=4\partial_x^2 + \nu$.
The first one is for the initial condition, which is the most difficult one, as we cannot allow for a pole in time.
The second one is for the term in \eqref{e:Res1} that contains $A|A|^2$, while the third one shows that the term in  \eqref{e:Res1} with $A^3$ is negligible.
After applying all the exchange Lemmas to  \eqref{e:Res1} we will see that in \eqref{e:ResExch} below
all of the remaining terms of order $\mathcal{O}(\varepsilon)$ 
will cancel due to the mild formulation of the Amplitude Equation in \eqref{e:GLmild}.

We will state all lemmas here and first prove the bound on the residual, before verifying the Exchange Lemmas.

\begin{lemma}[Exchange Lemma - IC]
\label{lem:lemma1} Let $A_0\in C_\kappa^{0,\alpha}$ for some $\alpha\in(\frac12,1)$ and sufficiently small $ \kappa>0$.  
	Then the  following holds 
	\[
		e^{t\cL_\nu}[A_0(\varepsilon \cdot)\e_1]=(e^{\Delta_\nu T}A_0)(\varepsilon \cdot)\e_1 + E_1(A_0)
	\]
	where $T=\varepsilon^2 t$, and the error $E_1$ is bounded uniformly in time for all small $\kappa>0$ by
	\[
		\|E_1\|_{C^0_\kappa} \leqslant  C \varepsilon^{\alpha-\frac12} \|A_0\|_{ C_\kappa^{0,\alpha}}\;.
	\]
\end{lemma}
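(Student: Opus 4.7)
The plan is a Fourier-analytic reduction to a multiplier estimate on the slow scale. Set $X=\varepsilon x$ and $T=\varepsilon^2 t$, and let $K$ denote the Fourier variable dual to $X$. Since $A_0(\varepsilon x)\e^{ix}$ has Fourier content on modes $\e^{i(1+\varepsilon K)x}$, the Swift--Hohenberg semigroup acts as multiplication with
\[
\exp\bigl[-t(1-(1+\varepsilon K)^2)^2+\nu\varepsilon^2 t\bigr]=\exp\bigl[-TK^2(2+\varepsilon K)^2+\nu T\bigr],
\]
whereas $\e^{T\Delta_\nu}$ acts on $\hat A_0(K)$ by $\exp(-4TK^2+\nu T)$. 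This yields the representation
\[
E_1(x)=\e^{ix}\e^{\nu T}\,(K_{\mathrm{err}}*A_0)(\varepsilon x),\qquad \widehat{K_{\mathrm{err}}}(K)=M(K):=\e^{-TK^2(2+\varepsilon K)^2}-\e^{-4TK^2}.
\]
The key structural point is $M(0)=0$, so $\int K_{\mathrm{err}}=0$ and the convolution can be rewritten as
\[
(K_{\mathrm{err}}*A_0)(X)=\int K_{\mathrm{err}}(Y)\bigl[A_0(X-Y)-A_0(X)\bigr]\,dY,
\]
which exposes exactly the H\"older modulus of $A_0$.

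Estimating the increment through the weighted H\"older norm gives, for small $\kappa>0$,
\[
|A_0(X-Y)-A_0(X)|\leqslant C(1+|X|)^\kappa\,\|A_0\|_{C^{0,\alpha}_\kappa}\,\bigl(|Y|^\alpha\wedge(1+|Y|)^{\alpha+\kappa}\bigr),
\]
so the lemma is reduced to the Fourier-multiplier bound
\[
\int|K_{\mathrm{err}}(Y)|\,\bigl(|Y|^\alpha\wedge(1+|Y|)^{\alpha+\kappa}\bigr)\,dY\leqslant C\,\varepsilon^{\alpha-1/2}
\]
uniformly in $T\in[0,T_0]$. The $C^0_\kappa$-norm is then recovered by \eqref{e:equivC0}, and the weight $(1+|X|)^\kappa$ is absorbed into the definition of the norm since $\kappa$ is arbitrarily small.

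For the multiplier estimate I would split at the natural scale $K_0=\varepsilon^{-1/2}$, which separates the regime where the Gaussian factor $\e^{-4TK^2}$ governs the Swift--Hohenberg symbol from the regime where the quartic correction $\varepsilon^2K^4$ in $(2+\varepsilon K)^2$ becomes comparable. For $|K|\leqslant K_0$ I would write
\[
M(K)=\e^{-4TK^2}\bigl(\e^{-T(4\varepsilon K^3+\varepsilon^2 K^4)}-1\bigr),
\]
apply $|\e^{-z}-1|\leqslant|z|$ on the bounded regime, and identify the leading part as $-4\varepsilon T\cdot(iK)^3\e^{-4TK^2}$, whose inverse Fourier transform is proportional to $\varepsilon T\,\partial_Y^3 K_T^{\mathrm{GL}}$ and contributes $\mathcal O(\varepsilon T^{(\alpha-1)/2})$ to the weighted integral. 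For $|K|>K_0$ one uses the uniform bound $|M|\leqslant 2$ together with the Hölder tail $K_0^{-\alpha}\|A_0\|_{C^{0,\alpha}_\kappa}=\varepsilon^{\alpha/2}\|A_0\|_{C^{0,\alpha}_\kappa}$; the resulting contribution is $\mathcal O(\varepsilon^{\alpha/2})$. Balancing the low- and high-frequency pieces at the critical slow time $T\sim\varepsilon$ gives the claimed rate $\varepsilon^{\alpha-1/2}$.

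The hard part will be the short-time window $T\lesssim\varepsilon$. There the effective width of the Swift--Hohenberg kernel in slow variables is $\varepsilon^{1/2}T^{1/4}$ rather than $T^{1/2}$, so neither the Taylor expansion of $M$ around $\varepsilon=0$ nor a direct Duhamel comparison against the Ginzburg--Landau semigroup is uniformly valid; moreover only H\"older regularity of $A_0$ is available, excluding a Lipschitz-type control that would collapse the argument to a Gaussian-kernel computation. Tracking this regime carefully--while keeping the bound uniform in $T\in[0,T_0]$ so that no time singularity appears--is precisely what forces the loss of $\varepsilon^{-1/2}$ against the otherwise-expected $\varepsilon^{\alpha}$ heuristic obtained from pure heat-kernel smoothing.
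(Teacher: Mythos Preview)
Your setup is correct and matches the paper's: the error is a convolution of $A_0$ with a kernel whose Fourier transform $M$ vanishes at $0$, and the bound reduces to controlling a weighted moment integral $\int |K_{\mathrm{err}}(Y)|\,|Y|^\alpha(2+|Y|)^\kappa\,dY$. But the execution has a genuine gap. Your low-frequency estimate $\varepsilon T^{(\alpha-1)/2}$ diverges as $T\to0$, and your final paragraph explicitly leaves the short-time window unresolved---you describe the difficulty but give no argument that closes it. The high-frequency piece is also not justified: the sentence ``$|M|\leqslant2$ together with the H\"older tail $K_0^{-\alpha}$'' is a Bernstein-type heuristic, not a bound on the moment integral you actually need, and a sharp frequency cutoff at $K_0$ is not a bounded operator on $C^0_\kappa$. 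Since the lemma is precisely about getting a bound \emph{uniform in $T\in[0,T_0]$ with no pole at $T=0$}, leaving the hard regime as ``the hard part'' is leaving out the proof.

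The paper avoids both issues by a different mechanism. Instead of splitting the slow Fourier variable at $\varepsilon^{-1/2}$ and estimating $K_{\mathrm{err}}$ pointwise, it splits in the \emph{original} Fourier variable with smooth projections $\mathcal{P}_1,\mathcal{P}_3$ (so five pieces in \eqref{e:ELIC}), and then bounds each moment integral via Cauchy--Schwarz and Plancherel by $L^2$-norms of $\hat H_T$, $\hat H_T'$, $\hat H_T''$ (Lemma~\ref{lem:guidoext} and Corollary~\ref{cor:new}). The point is that these $L^2$-norms can be computed directly from the explicit multiplier, uniformly in $T$: for the central piece $\mathcal{P}_1[\cdots]$ one finds $\|\hat H_T\|_{L^2},\|\hat H_T'\|_{L^2}\leqslant C$ and $\|\hat H_T''\|_{L^2}\leqslant C\varepsilon^{-1/2}$, which via Lemma~\ref{lem:guidoext} gives exactly $C\varepsilon^{\alpha-1/2}\|A_0\|_{C^{0,\alpha}_\kappa}$ with no time singularity. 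The far-field pieces need Corollary~\ref{cor:new} because $\|\hat H_T\|_{L^2}$ itself is not uniformly small there. This Plancherel route is what replaces your missing short-time analysis.
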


\begin{remark}
Here we allow some dependence on higher norms of the initial conditions, i.e.~we assume more regularity for the initial conditions in order to avoid the pole in time that appears 
in the exchange lemma below.
\end{remark}
\begin{remark}
For the solution $A$ of the amplitude equation we showed in Section~\ref{sec:reg} that it splits into a more regular part $B$ and the Gaussian $\mathcal{Z}$.
The process $B$ has $W^{1,p}_\varrho$-regularity as assumed for the initial condition $A(0)=A_0$ in the previous exchange Lemma.
The term $\mathcal{Z}$ is less regular, but thanks to the fact that it is Gaussian, we can still prove the exchange Lemma for initial conditions $A(0)$ which split into the more regular and the Gaussian part  (see the application of Lemma 3.5 in the proof of Theorem 4.2 of \cite{BB:15}.

Thus for our result we could take initial conditions that are as regular, as  the solution of the amplitude equation, but
in order to simplify the statement of the result we refer from adding the less regular Gaussian part here.
\end{remark}
The following lemma is applied in \eqref{e:Res1} to the term in the residual associated to the nonlinearity $|A|^2A$,
in order to exchange the semigroups there.

\begin{lemma}[Exchange Lemma I]
\label{lem:lemma2}
	For any  function $D \in C^{0,\alpha}_\kappa$ with small $\kappa>0$ and $\alpha\in(0,1)$, we have for all $t \in[0,T_0\varepsilon^{-2}]$
	\[
		e^{t\cL_\nu}[D(\varepsilon \cdot)\e_1] = (e^{\Delta_\nu T}D)(\varepsilon \cdot)\e_1 + E_2(T,D)
	\]
	with $T=\varepsilon^2t$ where the error term $E_2$ is bounded by
	\[
	\|E_2(T,D) \|_{C^{0}_\kappa} \leqslant  C \left[(\varepsilon^{2\gamma-1}+\varepsilon^{\alpha-\kappa} )T^{-1/2}+\varepsilon T^{-3/4}\right]  \|D\|_{C^{0,\alpha}_\kappa}\;.
	\]
\end{lemma}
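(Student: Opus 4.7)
\textbf{Proof plan for Lemma \ref{lem:lemma2}.} The natural approach is Fourier analysis combined with a frequency-splitting argument. The Swift--Hohenberg semigroup acts as a Fourier multiplier with symbol $\lambda(k) = -(1-k^2)^2 + \nu\varepsilon^2$, while the Ginzburg--Landau one has symbol $\mu(\ell) = -4\ell^2 + \nu$. The modulated function $D(\varepsilon\cdot)\e_1$ has Fourier transform $\varepsilon^{-1}\hat D((k-1)/\varepsilon)$, concentrated near $k=1$. Changing variables $k = 1+\varepsilon\ell$ yields
\begin{equation*}
\lambda(1+\varepsilon\ell) = -4\varepsilon^2\ell^2 - 4\varepsilon^3\ell^3 - \varepsilon^4\ell^4 + \nu\varepsilon^2,
\end{equation*}
so that with $T=\varepsilon^2 t$ the two multipliers differ by the factor
\begin{equation*}
e^{t\lambda(1+\varepsilon\ell)} - e^{T\mu(\ell)} = e^{T\mu(\ell)}\bigl[e^{-4T\varepsilon\ell^3 - T\varepsilon^2\ell^4} - 1\bigr].
\end{equation*}
Thus the whole problem reduces to estimating a Fourier multiplier on $\hat D$ and then translating the estimate back to the weighted $C^0_\kappa$-norm. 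The analogous linear result from \cite{BB:15} gives the template; here we only need to control the action on a fixed H\"older function $D$.

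The first step is to split $D = D_1 + D_2$ where $D_1$ has Fourier support in $|\ell|\le \varepsilon^{-1+\gamma}$ so that $|\varepsilon\ell|\le \varepsilon^{\gamma}$ stays small. For $D_1$, a pointwise Taylor estimate gives
\begin{equation*}
\bigl|e^{-4T\varepsilon\ell^3 - T\varepsilon^2\ell^4}-1\bigr|\leqslant C\bigl(T\varepsilon|\ell|^3 + T\varepsilon^2\ell^4\bigr)\;,
\end{equation*}
which combined with the Gaussian factor $e^{-4T\ell^2}$ absorbs all polynomial powers of $\ell$ at the cost of $T^{-3/4}\varepsilon$ (for the cubic term) and a smaller contribution. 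This produces the $\varepsilon T^{-3/4}$ piece of the error. For $D_2$, the high frequencies $|\ell|\geqslant\varepsilon^{-1+\gamma}$ contribute only through the tail of $\hat D$, which is controlled by H\"older regularity: $\|D\|_{C^{0,\alpha}}$ yields a weighted tail bound giving a factor $\varepsilon^{\alpha(1-\gamma)}$, while the Gaussian smoothing provides the usual $T^{-1/2}$. Balancing the two cutoffs leads precisely to the combination $(\varepsilon^{2\gamma-1}+\varepsilon^{\alpha-\kappa})T^{-1/2}$, where the $-\kappa$ arises from the polynomial loss when passing to the weighted norm.

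The last and most delicate step is to transfer these Fourier-side estimates into the weighted $C^0_\kappa$ norm, since $C^0$-norms are not Plancherel-friendly. The plan is to exploit the equivalent characterization of $C^0_\kappa$ from \eqref{e:equivC0}, localize $D$ on dyadic intervals $[-L,L]$, and apply the multiplier estimate on each box; the weight is paid by an $L^\kappa$-loss that combines with the $\varepsilon^{-\kappa}$ factor above. Equivalently, one can use the approach of \cite{BB:15}, rescaling $x\mapsto \varepsilon x$ so that $D(\varepsilon\cdot)$ becomes simply $D$, the weight $w_\kappa$ becomes the scaled weight $w_{\kappa,\varepsilon}$, and the smoothing estimates for the heat-type kernel $e^{T\Delta_\nu}$ together with the residual multiplier estimate can be combined in a convolution argument with weighted $L^\infty$-$L^\infty$ bounds.

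The main obstacle is exactly this last point: going from translation-invariant Fourier bounds (which would give $L^2$ or $L^\infty$ estimates without a weight) to the inhomogeneous weighted $C^0_\kappa$-bound, while simultaneously keeping the singular factors $T^{-1/2}$ and $T^{-3/4}$ sharp. The key technical input is the uniform control on the kernel of the difference $e^{t\cL_\nu} - \e_1^{-1}e^{T\Delta_\nu}\e_1$ and its interaction with the weight $w_{\kappa,\varepsilon}$, which is precisely the kind of computation worked out in the stochastic convolution setting of \cite{BB:15}; here the ingredients simplify because $D$ is a deterministic H\"older function rather than a Gaussian process.
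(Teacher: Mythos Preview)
Your identification of the multiplier difference $e^{T\mu(\ell)}[e^{-4T\varepsilon\ell^3-T\varepsilon^2\ell^4}-1]$ is correct and is the core computation. But your decomposition differs from the paper's and leaves a gap precisely at the step you flag as ``the main obstacle''. You propose to split $D=D_1+D_2$ by an $\varepsilon$-dependent frequency cutoff at $|\ell|\sim\varepsilon^{-1+\gamma}$; this is awkward because $D\in C^{0,\alpha}_\kappa$ may grow polynomially, so its Fourier transform is only a distribution and recovering weighted H\"older control on the pieces is not straightforward. The paper instead splits the \emph{operator}: it fixes smooth $\varepsilon$-independent projections $\mathcal{P}_1,\mathcal{P}_3$ supported near $k=\pm1,\pm3$ and decomposes the difference of semigroups into (i) the near-mode piece $\mathcal{P}_1^2(e^{t\cL_\nu}-e^{T\Delta_\nu})[D(\varepsilon\cdot)\e_1]$, (ii) the off-mode piece $\mathcal{P}_3^2 e^{t\cL_\nu}[\cdots]$, and (iii) the remainders $(1-\mathcal{P}_1^2-\mathcal{P}_3^2)e^{t\cL_\nu}[\cdots]$ and $(1-\mathcal{P}_1^2)e^{T\Delta_\nu}[\cdots]$. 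Each is a convolution operator $H_T\star\cdot$ acting on $D(\varepsilon\cdot)$.

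The tool that replaces your vague ``dyadic localization'' is Lemma~\ref{lem:ours}: for any convolution operator $\mathcal{H}=H\star\cdot$,
\[
\|\mathcal{H}u\|_{C^0_\kappa}\leqslant C\,\|\hat H\|_{H^m(\R)}\,\|u\|_{C^0_\kappa},\qquad m>\tfrac12+\kappa.
\]
This reduces the weighted-$C^0$ estimate to computing $H^1$-norms of explicit Fourier symbols. For the near-mode piece one gets $\|\hat H_T\|_{H^1}\leqslant C\varepsilon T^{-3/4}$ from exactly your Taylor expansion, yielding the $\varepsilon T^{-3/4}$ term. The off-mode and remainder pieces are handled by combining the semigroup bound of Corollary~\ref{cor:SGbound} with Lemma~\ref{lem:guidos}, which says that a convolution whose symbol has support bounded away from $0$ gains a factor $\varepsilon^\alpha$ on $D(\varepsilon\cdot)$ from the H\"older modulus; this produces the $\varepsilon^{\alpha-\kappa}$ term, while the $\varepsilon^{2\gamma-1}T^{-1/2}$ contribution comes from the far-field $H^1$ bound on the kernel, computed as in the proof of Lemma~\ref{lem:lemma3}. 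Your ``balancing'' claim that an $\varepsilon$-dependent cutoff would directly produce $(\varepsilon^{2\gamma-1}+\varepsilon^{\alpha-\kappa})T^{-1/2}$ is not substantiated: you obtain $\varepsilon^{\alpha(1-\gamma)}$ for the tail and give no mechanism for the $\varepsilon^{2\gamma-1}$ factor. Without Lemma~\ref{lem:ours} or an equivalent kernel-to-weighted-norm device, the argument does not close.
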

The third lemma is needed in \eqref{e:Res1} for the term in the residual associated to $A^3$, which should be small.
\begin{lemma}[Exchange Lemma II]
\label{lem:lemma3}
	For any  function $D \in C^{0,\alpha}_\kappa$ for all $\kappa>0$ and $\alpha\in(0,1)$, we have for all $t \in[0,T_0\varepsilon^{-2}]$
	\[
		e^{t\cL_\nu}[D(\varepsilon \cdot)\e_3] = E_3(T,D) 
	\]	
	where for any $\gamma\in[1/8,1/2)$ 
	the error term $E_3$  on the slow timescale  $T=\varepsilon^2t$
	is bounded by 
	\[\|E_3(T, D)\|_{C^{0}_\kappa} \leqslant  C ( \varepsilon^{2\gamma-1/2}+ \varepsilon^{\alpha-\kappa})T^{-1/2}
	\|D\|_{C^{0,\alpha}_\kappa}\;.
	\]
\end{lemma}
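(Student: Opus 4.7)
The plan is to mimic Lemma~\ref{lem:lemma2} in Fourier space, exploiting that the carrier wave $\e_3(x) = \e^{3ix}$ lies far from the critical wave numbers $k=\pm 1$ of $\cL_\nu$. Indeed the symbol $\ell(k)=-(1-k^2)^2+\nu\varepsilon^2$ of $\cL_\nu$ satisfies $\ell(3)=-64+\nu\varepsilon^2$ and is smooth and uniformly negative in a fixed neighbourhood of $k=\pm 3$. Since $D(\varepsilon\cdot)\e_3$ has its frequency content concentrated in an $\mathcal{O}(\varepsilon)$-window around $k=\pm 3$, the semigroup $\e^{t\cL_\nu}$ acts by a uniformly negative exponent on (essentially all of) this function. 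In particular, unlike in Lemmas~\ref{lem:lemma1} and~\ref{lem:lemma2}, there is no Ginzburg-Landau semigroup to be subtracted here: the whole expression is genuinely an error term, which is what allows the improved prefactor $\varepsilon^{2\gamma-1/2}$ compared with the $\varepsilon^{2\gamma-1}$ of Lemma~\ref{lem:lemma2}.

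Concretely, I would split $D$ by a Littlewood-Paley decomposition at frequency scale $\varepsilon^{-2\gamma}$ in the slow variable $X=\varepsilon x$, treating the two pieces separately. For the low-frequency piece, the Fourier support of $D_{\mathrm{low}}(\varepsilon\cdot)\e_3$ lies in a window of width $\varepsilon^{1-2\gamma}$ around $k=\pm 3$, which for $\gamma<1/2$ stays well inside the strongly dissipative region of $\ell$. Consequently $\|\e^{t\cL_\nu}[D_{\mathrm{low}}(\varepsilon\cdot)\e_3]\|_{C^0_\kappa}\leqslant C\e^{-c_0 t}\|D\|_{C^0_\kappa}$ with some $c_0>0$ independent of $\varepsilon$, and the resulting factor $\e^{-c_0 T/\varepsilon^2}$ is super-polynomially small in $\varepsilon$; combined with the same fourth-order semigroup smoothing estimate used in the proof of Lemma~\ref{lem:lemma2} this furnishes the $\varepsilon^{2\gamma-1/2}T^{-1/2}$ contribution. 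For the high-frequency piece, dissipation is not directly usable, but one instead exploits the Hölder regularity of $D$: a Paley-Littlewood tail bound for $D\in C^{0,\alpha}_\kappa$ at scale $\varepsilon^{-2\gamma}$ produces a contribution of order $\varepsilon^{\alpha-\kappa}T^{-1/2}\|D\|_{C^{0,\alpha}_\kappa}$ after passing to the weighted norm through the equivalence \eqref{e:equivC0} and absorbing the same semigroup smoothing factor $T^{-1/2}$. Summing both pieces yields the bound.

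The main technical obstacle will be the bookkeeping of the weighted frequency estimates. The Littlewood-Paley tail bound has to be formulated with respect to $\|D\|_{C^{0,\alpha}_\kappa}$, which only controls $D$ on $[-L,L]$ up to growth $L^\kappa$, rather than a global $C^{0,\alpha}$-norm; this is precisely what forces the small loss $\varepsilon^{-\kappa}$ in the final exponent $\varepsilon^{\alpha-\kappa}$. One also has to verify that no spurious power of $\varepsilon^{-1}$ is introduced by the slow rescaling $x\mapsto \varepsilon x$ and that the semigroup smoothing bound transfers compatibly to the weighted $C^0_\kappa$-norm, which is where \eqref{e:equivC0} is essential. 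Beyond this bookkeeping, the argument is in fact strictly easier than the proof of Lemma~\ref{lem:lemma2}, precisely because no Ginzburg-Landau counterpart has to be compared against.
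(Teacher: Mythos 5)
Your high-level intuition is right and matches the paper: the carrier $\e_3$ sits in a strongly damped region of the symbol, so there is no Ginzburg--Landau term to subtract and the whole expression is an error term. But the decomposition you propose does not deliver the claimed exponents. The paper does \emph{not} perform a Littlewood--Paley split of $D$ at slow-frequency scale $\varepsilon^{-2\gamma}$; it splits in the \emph{fast} Fourier variable into a fixed, $\varepsilon$-independent window around the critical modes $k=\pm1$ (via the projector $\mathcal{P}$) and its complement $\mathcal{Q}=I-\mathcal{P}^2$. The term $\varepsilon^{\alpha-\kappa}$ comes from the $\mathcal{P}$-part: viewed in the slow variable, $\mathcal{P}[D(\varepsilon\cdot)\e_3]$ is a convolution of $D(\varepsilon\cdot)$ with a fixed kernel whose symbol vanishes near $0$, so Lemma~\ref{lem:guidos} gives a factor $\varepsilon^{\alpha}$, and Corollary~\ref{cor:SGbound} bounds the semigroup on that window at the cost $\varepsilon^{-\kappa}$. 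With your split at scale $N=\varepsilon^{-2\gamma}$, the H\"older tail bound only yields $N^{-\alpha}=\varepsilon^{2\gamma\alpha}$, which for the admissible value $\gamma=1/8$ is $\varepsilon^{\alpha/4}$ and falls far short of $\varepsilon^{\alpha-\kappa}$. To recover $\varepsilon^{\alpha}$ you would be forced to cut at scale $\sim\varepsilon^{-1}$, at which point you have essentially reproduced the paper's decomposition.

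The second gap is that you never actually bound the action of $e^{t\cL_\nu}$ on your high-frequency piece. Its Fourier support is unbounded and contains the non-dissipative neighbourhoods of $k=\pm1$, so neither uniform dissipation nor a soft ``smoothing factor $T^{-1/2}$'' applies there; one needs a genuine operator-norm estimate in $C^0_\kappa$. In the paper this is Lemma~\ref{lem:ours} applied to the kernel $\hat H_T(k)=e^{\nu T}e^{-T\varepsilon^{-2}(4+k\varepsilon)^2(2+k\varepsilon)^2}\hat Q(3+\varepsilon k)$, whose $H^1$-norm must be controlled separately on $[-c/\varepsilon,c/\varepsilon]$ (where the interval length $O(\varepsilon^{-1})$ combined with the uniform decay $e^{-C_0T\varepsilon^{-2}}$ and the elementary bound $e^{-x}\leqslant C_\gamma x^{-\gamma}$ produces $\varepsilon^{4\gamma-1}T^{-2\gamma}$, hence $\varepsilon^{2\gamma-1/2}T^{-\gamma}$ after the square root) and on $|k|>c/\varepsilon$ (where the quartic decay $e^{-cTk^4\varepsilon^{-2}}$ is needed). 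This is also where the $\varepsilon^{2\gamma-1/2}$ term really originates: it is the price of the $H^1$-norm of the kernel over an $O(\varepsilon^{-1})$-long frequency interval in the \emph{dissipative} region, not a contribution of the tiny low-frequency window around $k=\pm3$, which is super-polynomially small and satisfies the bound trivially. Without these kernel estimates your argument has no mechanism producing either term of the stated bound.
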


Now we apply all Exchange Lemmas \ref{lem:lemma1}, \ref{lem:lemma2}, and \ref{lem:lemma3} 
together with the result for the stochastic convolution from Lemma \ref{lem:stoch} 
to the definition of $\text{Res}(u_A)$ from~\eqref{e:Res1} to obtain
\begin{equation}
\label{e:ResExch}
\begin{split}
	\text{Res}(u_A)(t)  ={}& \varepsilon \Big[A(\varepsilon \cdot, \varepsilon^2 t ) - \e^{\Delta_\nu T}[A(\varepsilon \cdot, 0)]
	\\ &
	+  3\varepsilon^2 \int_0^t \e^{(T-\varepsilon^2s)\Delta_\nu} A|A|^2(\varepsilon^2 s,\varepsilon \cdot ) ds - \mathcal{W}_{\Delta_\nu}(\varepsilon^2 t, \varepsilon\cdot)\Big]\e_1 + c.c.\\
	& + \varepsilon [ E_1(A_0)+ E_s]  
	+ \varepsilon \int_0^T  \left[E_2\left(T-S, A|A|^2\right)+ E_3\left(T-S, A^3\right)\right] \; dS.
\end{split}
\end{equation}

With this representation we are done. By substituting $S=s\varepsilon^2$ in the integral, 
we obtain that the whole bracket $[\cdots]\e_1$ is the mild formulation of the Ginzburg-Landau equation (see~\eqref{e:GLmild}) and thus cancels.
Using the bounds on the error terms and the regularity of $A$, we obtain the main result.

Note that all poles from the error terms are integrable and that we choose $\alpha, \gamma<1/2$, arbitrarily close to $1/2$.

\begin{theorem}[Residual]
\label{thm:res}
Let $A$ be a solution of the amplitude equation \eqref{e:GL} and assume that there is a  $\varrho>2$ such that for all $p>1$ one has $A(0)\in W^{1,p}_{\varrho}$.
Then for the approximation $u_A$ defined in \eqref{def:uA} and the residual defined in \eqref{e:residual}  we have
for all small $\kappa>0$ that
\begin{equation}
\label{e:thmmain}
\| \text{\rm Res}(u_A)( \varepsilon^{-2}\;\cdot)\|_{C^0_\kappa} =\mathcal{O}(\varepsilon^{\frac32 - 2\kappa }) \;.
\end{equation}
\end{theorem}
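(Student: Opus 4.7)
The plan is to start from the explicit expression \eqref{e:Res1} for $\text{Res}(u_A)(T\varepsilon^{-2})$, obtained by inserting the ansatz \eqref{def:uA} into the definition of the residual \eqref{e:residual} and rescaling to the slow time $T=\varepsilon^2 t$. Then I would replace each occurrence of the Swift--Hohenberg semigroup $e^{t\cL_\nu}$ acting on a slowly varying profile times $\e_1$ by the corresponding Ginzburg--Landau semigroup $e^{T\Delta_\nu}$ (up to controlled errors), and similarly replace $\varepsilon^{3/2}W_{\cL_\nu}$ by $\varepsilon\mathcal{Z}(\varepsilon^2\cdot,\varepsilon\cdot)\e_1+c.c.$. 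This is exactly what the four comparison results are designed for, so I would apply them term by term: Lemma~\ref{lem:lemma1} to the initial-data contribution, Lemma~\ref{lem:lemma2} to the resonant cubic term carrying $A|A|^2\,\e_1$, Lemma~\ref{lem:lemma3} to the non-resonant term carrying $A^3\,\e_3$, and Lemma~\ref{lem:stoch} to the stochastic convolution.

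After these substitutions, the collection of all $\mathcal{O}(\varepsilon)$ contributions appearing on the $\e_1$ mode (and its complex conjugate) will organize into exactly $\varepsilon$ times the mild formulation \eqref{e:GLmild} of the amplitude equation, evaluated at slow time $T$; these terms therefore cancel identically, because $A$ is by assumption a mild solution of \eqref{e:GL}. What remains is
\[
 \varepsilon E_1(A_0) + \varepsilon E_s + \varepsilon\int_0^T \bigl[E_2(T-S,A|A|^2)+E_3(T-S,A^3)\bigr]\,dS ,
\]
and the theorem reduces to estimating this sum in $C^0_\kappa$.

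For the remaining estimate I would proceed term by term. Lemma~\ref{lem:stoch} gives $\|E_s\|_{C^0_\gamma}=\mathcal{O}(\varepsilon^{1-\kappa})$, so $\varepsilon E_s$ is already of order $\varepsilon^{2-\kappa}$. Lemma~\ref{lem:lemma1} gives $\|E_1\|_{C^0_\kappa}\leqslant C\varepsilon^{\alpha-1/2}\|A_0\|_{C^{0,\alpha}_\kappa}$, and since $A_0\in W^{1,p}_\varrho$ for all large $p$ embeds into $C^{0,\alpha}_\kappa$ with $\alpha$ arbitrarily close to $1$, this is of order $\varepsilon^{3/2-}$. For the two time integrals, the poles $(T-S)^{-1/2}$ and $(T-S)^{-3/4}$ are integrable on $[0,T]$, so what matters is the supremum in time of $\|A|A|^2\|_{C^{0,\alpha}_\kappa}$ and $\|A^3\|_{C^{0,\alpha}_\kappa}$, which are controlled by Corollary~\ref{cor:maxregA} together with the trivial trilinear bound for weighted H\"older norms (with a threefold enlargement of the weight, which is harmless since $\kappa$ may be chosen arbitrarily small). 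Choosing $\alpha$ and the parameter $\gamma$ in the exchange lemmas close to their admissible upper endpoints then pushes the cumulative loss down to $\varepsilon^{3/2-2\kappa}$.

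The main obstacle I expect is the bookkeeping of the $\varepsilon$-exponents: every exchange lemma contributes a small power loss of the form $\varepsilon^{\alpha-1/2}$, $\varepsilon^{\alpha-\kappa}$ or $\varepsilon^{2\gamma-1/2}$, and one must verify that by simultaneously choosing $\alpha$ close to $1/2$ (close to $1$ for the initial-condition lemma), $\gamma$ close to $1/2$, and the integrability exponent $p$ in $W^{1,p}_\varrho$ sufficiently large, none of these losses exceeds the budget $\varepsilon^{-2\kappa}$. A secondary technical point is that the cubic terms $A|A|^2$ and $A^3$ must be placed in a weighted H\"older space, which is available only because Corollary~\ref{cor:maxregA} supplies all moments of $\|A\|_{C^{0,\eta}_\kappa}$; the high-probability framework of Definition~\ref{def:0} then accommodates all error terms without any extra probabilistic argument beyond quoting the moment bounds from Section~\ref{sec:reg}.
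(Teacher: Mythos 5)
Your proposal reproduces the paper's argument: start from the rescaled representation of the residual, apply the three exchange lemmas and the stochastic-convolution lemma term by term, observe that the surviving $\mathcal{O}(\varepsilon)$ terms on the $\e_1$ mode assemble into the mild formulation \eqref{e:GLmild} and cancel, and then bound the error terms using the integrability of the time poles, the regularity of $A$ from Corollary~\ref{cor:maxregA}, and the choice of $\alpha,\gamma$ close to their admissible endpoints. This is exactly the paper's proof, with your exponent bookkeeping matching the intended $\varepsilon^{3/2-2\kappa}$ budget.
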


\begin{remark}
	Note that under the assumptions of the previous theorem 
	by the regularity results in Section \ref{sec:reg} we have for all small $\kappa>0$, $\gamma\in(0,1)$ and $\alpha\in(0,\frac12)$ that $A(0)\in C^{0,\gamma}_\kappa$  and $A\in L^\infty([0,T], C^{0,\alpha}_\kappa)$
\end{remark}

 We remark without proof that one could replace   $- 2\kappa $ on the right hand side of 
 \eqref{e:thmmain} by an arbitrarily small $\delta>0$. But as $\kappa$ is small also,
 we state this simpler but weaker statement.


\subsection{Fourier Estimates}


Now we present three results that have the same focus, as
they all bound convolution operators with a kernel such that the support of the Fourier transform is bounded away from $0$.  
The bounds are established in weighted H\"older norms and are the backbone of the proofs of the exchange lemmas.
 
In the first one, Lemma \ref{lem:guidos}, we consider some smooth projection on a region in Fourier space that is far away from the origin.
Using H\"older regularity, we show that this is an operator with small norm, when considered from $C^{0,\alpha}_\kappa$ to $C^0_\kappa$.
Later in Lemma \ref{lem:guidoext}  we modify the proof to give bounds on convolution operators using the $H^2$-norm of the Fourier transform of the kernel.
While in Corollary \ref{cor:new} we finally modify the result even more, by showing that we do not need the $L^2$-norm of the Fourier transform of the kernel. 
While Lemma \ref{lem:guidoext} will be sufficient for most of the estimates used in the proofs of the exchange lemmas, at one occasion we need Corollary \ref{cor:new}.

\begin{lemma}\label{lem:guidos}
	Let $\widehat{P}:\R\to [0,1]$ be a smooth function with bounded support, such that $0\notin\textup{supp}(\widehat{P})$. Let also $D\in \mathcal{C}^{0,\alpha}_\kappa$, with $\alpha\in (0,1)$, $\kappa>0$. 
	Then 
	\[
		\|P\ast D(\varepsilon\cdot)\|_{C^0_\kappa}\leqslant  C\varepsilon^\alpha\|D\|_{C^{0,\alpha}_\kappa}.
	\]
\end{lemma}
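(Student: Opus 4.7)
The plan is to exploit the cancellation given by $\widehat{P}(0)=0$, which follows from the assumption that $0$ does not belong to the support of $\widehat{P}$, and then to use the H\"older regularity of $D$ on suitably chosen intervals. Since $\widehat{P}$ is smooth with compact support, $P$ is a Schwartz function; in particular $\int_\R P(y)\,dy = \widehat{P}(0) = 0$. This allows us to rewrite
\[
(P\ast D(\varepsilon\cdot))(x) = \int_\R P(y)\,D(\varepsilon(x-y))\,dy = \int_\R P(y)\bigl[D(\varepsilon(x-y)) - D(\varepsilon x)\bigr]\,dy,
\]
so that only the H\"older increment of $D$ appears, paying a factor $\varepsilon^\alpha$.

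The next step is to feed this increment into the weighted H\"older norm. For fixed $x,y$, choose $L = L(x,y) := 1 + |\varepsilon x| + |\varepsilon y|$; then both $\varepsilon x$ and $\varepsilon(x-y)$ lie in $[-L,L]$ and the definition of $\|\cdot\|_{C^{0,\alpha}_\kappa}$ in \eqref{def:wHoe} gives
\[
|D(\varepsilon(x-y)) - D(\varepsilon x)| \leqslant L(x,y)^{\kappa}\,\|D\|_{C^{0,\alpha}_\kappa}\,(\varepsilon|y|)^{\alpha}.
\]
Assuming $\varepsilon\leqslant 1$, the bound $L(x,y)^\kappa \leqslant C(1+|x|)^\kappa (1+|y|)^\kappa$ separates the $x$- and $y$-dependence. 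Plugging in and dividing by $(1+x^2)^{\kappa/2}$ one obtains
\[
\frac{|(P\ast D(\varepsilon\cdot))(x)|}{(1+x^2)^{\kappa/2}}
\leqslant C\,\varepsilon^{\alpha}\,\|D\|_{C^{0,\alpha}_\kappa}
\int_\R |P(y)|\,(1+|y|)^{\kappa}\,|y|^{\alpha}\,dy.
\]

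Since $P\in\mathcal{S}(\R)$, the remaining integral is finite and depends only on $\alpha,\kappa$ and $P$. Taking the supremum over $x\in\R$ yields the claimed estimate. The only step that requires care is the choice of $L(x,y)$ adapted to each pair of points, since a single fixed $L$ would either lose the H\"older gain for large $|y|$ or destroy the $x$-weight; the rest is a routine bookkeeping combining Schwartz decay of $P$ with the trivial subadditivity of $(1+|\cdot|)^\kappa$.
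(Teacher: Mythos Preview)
Your proof is correct and follows essentially the same approach as the paper: both exploit $\widehat{P}(0)=0$ to insert the H\"older increment $D(\varepsilon(x-y))-D(\varepsilon x)$, then control it via the weighted H\"older norm on an interval whose length depends on $|y|$, and finally integrate against the rapidly decaying kernel. The only cosmetic difference is that the paper writes the cancellation through the auxiliary multiplier $\widehat{G}=1-\widehat{P}$ and works with the $C^0[-L,L]$ norm first before dividing by $L^\kappa$, whereas you use $\int P=0$ directly and a pointwise choice $L(x,y)$; both routes lead to the same integral $\int |P(z)|\,|z|^\alpha(1+|z|)^\kappa\,dz$.
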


\begin{proof}
	Let us define $\widehat{G}=1-\widehat{P}$. Then, by taking the inverse Fourier transform we have for $x\in \R$
	\[
		P(x)+G(x)=\sqrt{2\pi}\delta_0. 
	\]
	Note also that $\widehat{G}(0)=1$.
	
	Now 
	\begin{equation*}
	\begin{split}
		P\ast D(\varepsilon\cdot) & = \sqrt{2\pi}D(\varepsilon\cdot) - G\ast D(\varepsilon\cdot)\\
		& = D(\varepsilon\cdot)\int_{\R}G(z)dz - \int_{\R}G(z)D(\varepsilon(z-\cdot))dz\\
		& = \varepsilon^\alpha\int_{\R}G(z)|z|^\alpha\frac{D(\varepsilon\cdot)-D(\varepsilon(z-\cdot))}{\varepsilon^\alpha|z|^\alpha}dz.
	\end{split}
	\end{equation*}
	Let us consider
	\[
		\begin{split}
			\|P\ast D(\varepsilon\cdot)\|_{C^0[-L,L]} & \leqslant  \varepsilon^\alpha\int_{\R}|G(z)||z|^\alpha\|D\|_{C^{0,\alpha}[-L_z,L_z]}dz\\
			& \leqslant  \varepsilon^\alpha\int_{\R}|G(z)||z|^\alpha L_z^\kappa\|D\|_{C^{0,\alpha}_\kappa}dz,
		\end{split}
	\]
	where $L_z=\max\{\varepsilon L+\varepsilon |z|,2\}$. We can now divide both sides by $L^\kappa$ to obtain
	\begin{equation}\label{e:so}
		\|P\ast D(\varepsilon\cdot)\|_{C^0_\kappa}\leqslant  \varepsilon^\alpha\int_{\R}|G(z)||z|^\alpha \Big(\frac{L_z}{L}\Big)^\kappa dz \|D\|_{C^{0,\alpha}_\kappa}.
	\end{equation}
	Now recall $\varepsilon\in (0,1)$ and $L>1$, so we derive
	\[
		\frac{L_z}{L}=\max\Big\{\varepsilon + \varepsilon\frac{|z|}{L},\frac{2}{L}\Big\}\leqslant  2+|z| \;.
	\]
	Going  back to \eqref{e:so}
	\[
		\|P\ast D(\varepsilon\cdot)\|_{C^0_\kappa}\leqslant  \varepsilon^\alpha\int_{\R}|G(z)||z|^\alpha (2+|z|)^\kappa dz \|D\|_{C^{0,\alpha}_\kappa}.
	\]
	The integral is actually finite and bounded by a constant $C_{\alpha,\kappa}$, as $\widehat{G}$ is sufficiently smooth. 
	Since any derivative has bounded support so $G$ decays sufficiently fast for the existence of the integral.
\end{proof}

A simple modification of the previous proof yields the following Lemma:

\begin{lemma}
 \label{lem:guidoext} 
	Let $\widehat{P}:\R\to [0,1]$ be a smooth function and $P$ its inverse Fourier transform. 
	Let also $D\in \mathcal{C}^{0,\alpha}_\kappa$, with $\alpha\in (0,1)$, $\kappa\in(0,1/2)$. 
	Then 
	\[
		\|P\ast D(\varepsilon\cdot)\|_{C^0_\kappa}\leqslant  C \varepsilon^\alpha \big[  \|\widehat{P}\|_{L^2}^{1-\alpha} \|\widehat{P}''\|_{L^2}^\alpha +   \|\widehat{P}''\|_{L^2} \big]\|D\|_{C^{0,\alpha}_\kappa} + |\widehat{P}(0)|\|D(\varepsilon\cdot)\|_{C^0_\kappa}\;.
	\]
\end{lemma}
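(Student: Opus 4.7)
The plan is to mimic the proof of Lemma~\ref{lem:guidos}, tracking the contribution from $\widehat{P}(0)$ which is no longer assumed to vanish. The starting point is the identity
\[
P \ast D(\varepsilon\cdot)(x) = D(\varepsilon x)\int_\R P(z)\, dz + \int_\R P(z)\bigl[D(\varepsilon(x-z)) - D(\varepsilon x)\bigr]\, dz.
\]
Since $\int_\R P(z)\, dz = \sqrt{2\pi}\,\widehat{P}(0)$ by Fourier inversion at the origin, the first term on the right produces exactly the contribution $|\widehat{P}(0)|\, \|D(\varepsilon\cdot)\|_{C^0_\kappa}$ in the stated inequality. Here I also need $P\in L^1(\R)$ to legitimately separate the two pieces; but $\|(1+z^2)P\|_{L^2}\leqslant \|\widehat{P}\|_{L^2}+\|\widehat{P}''\|_{L^2}<\infty$ together with Cauchy--Schwarz against $(1+z^2)^{-1}$ yields exactly that.

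For the second term I would proceed essentially as in Lemma~\ref{lem:guidos}: with $L_z = \max(\varepsilon|x| + \varepsilon |z|, 2)$ the H\"older regularity of $D$ gives
\[
|D(\varepsilon(x-z)) - D(\varepsilon x)| \leqslant \varepsilon^\alpha |z|^\alpha L_z^\kappa \|D\|_{C^{0,\alpha}_\kappa},
\]
and, after dividing by $L^\kappa$ and using the bound $(L_z/L)^\kappa \leqslant (2+|z|)^\kappa$ exactly as in~\eqref{e:so}, one arrives at
\[
\Bigl\|\int_\R P(z)\bigl[D(\varepsilon(\cdot-z)) - D(\varepsilon\cdot)\bigr]\, dz \Bigr\|_{C^0_\kappa} \leqslant C\varepsilon^\alpha\, \|D\|_{C^{0,\alpha}_\kappa}\, \int_\R |P(z)|\, |z|^\alpha (2+|z|)^\kappa\, dz.
\]

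The key new step, replacing the decay argument available to Lemma~\ref{lem:guidos}, is to control the last integral $I$ without the support assumption on $\widehat{P}$. My plan is to split the $z$-integration at some scale $R\geqslant 1$ and apply Cauchy--Schwarz together with Plancherel on each piece: on $\{|z|\leqslant R\}$ I would use $\|P\|_{L^2}=\|\widehat{P}\|_{L^2}$, while on $\{|z|>R\}$ I would write $|z|^\alpha=z^2\cdot|z|^{\alpha-2}$ and use $\|z^2P\|_{L^2}=\|\widehat{P}''\|_{L^2}$. Integrability of $|z|^{\alpha-2}(2+|z|)^\kappa$ at infinity requires $\alpha+\kappa<3/2$, and this is precisely why $\kappa\in(0,1/2)$ is assumed. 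Optimizing the split $R$ produces the interpolation factor $\|\widehat{P}\|_{L^2}^{1-\alpha}\|\widehat{P}''\|_{L^2}^{\alpha}$ up to constants, and the non-scale-invariant correction carried by the growth weight $(2+|z|)^\kappa$ is absorbed into the additive $\|\widehat{P}''\|_{L^2}$ summand in the statement.

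The main obstacle will be extracting the clean interpolation form $\|\widehat{P}\|_{L^2}^{1-\alpha}\|\widehat{P}''\|_{L^2}^{\alpha}$ rather than the cruder $\|\widehat{P}\|_{L^2}+\|\widehat{P}''\|_{L^2}$, since the factor $(2+|z|)^\kappa$ destroys the exact scaling between $\|P\|_{L^2}$ and $\|z^2P\|_{L^2}$ that the interpolation exponent $\alpha$ encodes; everything hinges on choosing $R$ to balance the two Cauchy--Schwarz contributions while keeping the $\kappa$-correction strictly bounded by the additive $\|\widehat{P}''\|_{L^2}$ piece.
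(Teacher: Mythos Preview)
Your reduction to bounding $\int_\R |P(z)|\,|z|^\alpha(2+|z|)^\kappa\,dz$ is correct and matches the paper (their equation~\eqref{e:cent}), as is the large-$|z|$ treatment via Cauchy--Schwarz against $z^2P$. The gap is in the small-$|z|$ part: optimizing over $R$ does \emph{not} produce the exponent $1-\alpha$. Carrying out your plan --- Cauchy--Schwarz gives $C\|\widehat{P}\|_{L^2}R^{\alpha+1/2}$ on $|z|\leqslant R$ and $C\|\widehat{P}''\|_{L^2}R^{\alpha-3/2}$ on $|z|>R$ --- and balancing yields $R^2=\|\widehat{P}''\|_{L^2}/\|\widehat{P}\|_{L^2}$, hence a bound proportional to $\|\widehat{P}\|_{L^2}^{3/4-\alpha/2}\|\widehat{P}''\|_{L^2}^{1/4+\alpha/2}$. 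For $\alpha>1/2$ and $\|\widehat{P}\|_{L^2}\gg\|\widehat{P}''\|_{L^2}$ this is strictly larger than $\|\widehat{P}\|_{L^2}^{1-\alpha}\|\widehat{P}''\|_{L^2}^{\alpha}+\|\widehat{P}''\|_{L^2}$, so no choice of $R$ recovers the lemma as stated, and the $\kappa$-correction is not the culprit.

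The paper instead fixes the split at $|z|=1$. On $\{|z|\leqslant 1\}$ the factor $(2+|z|)^\kappa$ is harmlessly bounded; one first applies Cauchy--Schwarz over the bounded interval to reach $\bigl(\int_{|z|\leqslant 1}|P|^2|z|^{2\alpha}\,dz\bigr)^{1/2}$, and then applies H\"older with exponents $1/(1-\alpha)$ and $1/\alpha$ to the integrand written as $(|P|^2)^{1-\alpha}\bigl(|P|^2|z|^2\bigr)^{\alpha}$. Together with Plancherel this gives exactly $\|\widehat{P}\|_{L^2}^{1-\alpha}\|\widehat{P}'\|_{L^2}^{\alpha}$. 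The point is that the interpolation exponent $\alpha$ comes directly from the H\"older step on $|z|^{2\alpha}$, not from a scale balance, so there is no obstacle. (Note that the paper's own proof in fact ends with $\|\widehat{P}'\|_{L^2}^\alpha$ rather than the $\|\widehat{P}''\|_{L^2}^\alpha$ printed in the statement.)
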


\begin{proof}
 First note that 
 \[
[ P\ast D(\varepsilon\cdot)](x) = \int_{\R} P(y)  [D(\varepsilon (x-y)) - D(\varepsilon x) ] dy + \widehat{P}(0) D(\varepsilon x)\;.
 \]
 Then we can proceed as in the proof before to obtain 
 \begin{equation}
 \label{e:cent}
  \|P\ast D(\varepsilon\cdot)\|_{C^0_\kappa}
\leqslant  C\varepsilon^\alpha \int_{\R}|P(z)||z|^\alpha (2+|z|)^\kappa dz \|D\|_{C^{0,\alpha}_\kappa} + |\widehat{P}(0)|\|D(\varepsilon\cdot)\|_{C^0_\kappa}\;.
 \end{equation}
Now we bound the integral. For $|z|\geqslant1$ as $\alpha+\kappa \leqslant  3/2$
\[
\begin{split}
 \int_{\{|z|\geqslant1\}} |P(z)||z|^{\alpha+\kappa} dz 
 & \leqslant   \Big(\int_{\R}|P(z)|^2 |z|^4 dz \Big)^{\frac12} 
 \Big(\int_{\R} |z|^{2(\alpha+\kappa-2)} dz\Big)^{\frac12}\\
 & \leqslant  C \|\widehat{P}''\|_{L^2}\;,
\end{split}
\]
where we used Plancherel theorem in the last step.  For $|z|\leqslant  1$ we have 
\[
\begin{split}
 \int_{\{|z|\leqslant 1\}} |P(z)||z|^{\alpha} dz 
 & \leqslant    C \Big(\int_{\{|z|\leqslant 1\}} |P(z)|^2|z|^{2\alpha} dz\Big)^{1/2} \\
&\leqslant  C  \Big(\int_{\R}|P(z)|^2 dz \Big)^{(1-\alpha)/2} 
 \Big(\int_{\R} |P(z)|^2 |z|^{2} dz\Big)^{\alpha/2}\\
 & \leqslant  C \|\widehat{P}\|_{L^2}^{1-\alpha} \|\widehat{P}'\|_{L^2}^\alpha\;,
\end{split}
\]
where we used first the Cauchy-Schwarz inequality, then H\"older's one with $p=1/(1-\alpha)$ and $q=1/\alpha$, and finally Plancherel theorem.

Putting together all estimates finishes the proof.
\end{proof}

Unfortunately, the previous two lemmas are not sufficient in Lemma~\ref{lem:lemma1}.
When the support of $\widehat{P}$ is unbounded we have problems with the $L^2$-norm of $\widehat{P}$, 
while higher derivatives are easier to bound. 

\begin{corollary}
 \label{cor:new}
 Let $\widehat{P}:\R\to [0,1]$ be a smooth function, $P$ its inverse Fourier transform, 
 and suppose that there is some $\delta>0$ such that $\text{supp}(\widehat{P}) \cap (-\delta,\delta)=\emptyset$.   
 Fix $\alpha\in (0,1)$, $\kappa\in(0,1/2)$ and suppose that there is a $\gamma>1$ such that   $\alpha+\kappa+\gamma/2 \in(1,2)$. 
 
Then for all  $D\in \mathcal{C}^{0,\alpha}_\kappa$ we have
	\[
		\|P\ast D(\varepsilon\cdot)\|_{C^0_\kappa}
		\leqslant  C \varepsilon^\alpha  \|\widehat{P}'\|_{L^2}^{2-\alpha-\kappa-\frac{\gamma}2}  \|\widehat{P}''\|_{L^2}^{\alpha+\kappa+\frac{\gamma}2-1 } \|D\|_{C^{0,\alpha}_\kappa}\;.
	\]
\end{corollary}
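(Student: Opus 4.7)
The strategy is to follow the proof of Lemma~\ref{lem:guidoext} with one crucial modification: the support hypothesis forces $\widehat{P}(0)=0$, so the boundary term $|\widehat{P}(0)|\|D(\varepsilon\cdot)\|_{C^0_\kappa}$ that appeared there drops out, and following the same subtraction step used in the proof of Lemma~\ref{lem:guidoext} the only thing left to estimate is
\[
I := \int_{\R} |P(z)|\,|z|^{\alpha}(2+|z|)^{\kappa}\,dz,
\]
which must be bounded by the constant on the right-hand side of the claim (i.e.\ without the $\varepsilon^\alpha$ and $\|D\|_{C^{0,\alpha}_\kappa}$ factors).

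The core interpolation is the following. For any $s\in[1,2]$, writing $|z|^{2s}=(|z|^{2})^{2-s}(|z|^{4})^{s-1}$ and applying H\"older's inequality with conjugate exponents $\frac{1}{2-s}$ and $\frac{1}{s-1}$ yields, together with Plancherel's theorem,
\[
\||z|^{s}P\|_{L^{2}} \leq \|zP\|_{L^{2}}^{2-s}\|z^{2}P\|_{L^{2}}^{s-1} = \|\widehat{P}'\|_{L^{2}}^{2-s}\|\widehat{P}''\|_{L^{2}}^{s-1}.
\]
Setting $s:=\alpha+\kappa+\gamma/2\in(1,2)$ produces exactly the exponents in the conclusion.

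I then decompose $I=I_{1}+I_{2}$ at $|z|=1$. On $|z|\geq 1$, using $(2+|z|)^{\kappa}\leq 3^{\kappa}|z|^{\kappa}$ and Cauchy-Schwarz with the weight $|z|^{s}$ gives
\[
I_{2} \leq C\,\||z|^{s}P\|_{L^{2}}\Big(\int_{|z|\geq 1}|z|^{2(\alpha+\kappa-s)}\,dz\Big)^{1/2},
\]
and the second factor is a finite constant because $2(s-\alpha-\kappa)-1=\gamma-1>0$. Combined with the interpolation inequality above, this is precisely the desired estimate for $I_{2}$.

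The main obstacle is $I_{1}$: the same Cauchy-Schwarz with weight $|z|^{s}$ produces a factor $\int_{|z|\leq 1}|z|^{2(\alpha-s)}dz$ that diverges, because the condition $s<\alpha+1/2$ needed for integrability at zero is incompatible with $s=\alpha+\kappa+\gamma/2>\alpha+1/2$ enforced by $\gamma>1$. To circumvent this I plan to use, for the small-$|z|$ piece, a Cauchy-Schwarz with a smaller weight $|z|^{s''}$, $s''\in[1,\alpha+1/2)$, for which the same interpolation inequality applies; the resulting exponent combination must then be reconciled with the target $\|\widehat{P}'\|_{L^{2}}^{2-s}\|\widehat{P}''\|_{L^{2}}^{s-1}$ using the elementary observation that $|z|^{2s}\leq|z|^{2}$ on $|z|\leq 1$ (valid since $s\geq 1$), which bounds $\int_{|z|\leq 1}|P|^{2}|z|^{2s}\,dz$ by $\|\widehat{P}'\|_{L^{2}}^{2}$, and by careful bookkeeping of the exponents which uses both inequalities $\gamma>1$ and $\alpha+\kappa+\gamma/2<2$ from the hypothesis. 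This exponent-matching is the most delicate part of the argument.
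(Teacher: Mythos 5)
Your strategy is the paper's own up to the point where you get stuck: you kill the $\widehat P(0)$-term using the support hypothesis, reduce everything to the integral $I=\int_{\R}|P(z)|\,|z|^{\alpha}(2+|z|)^{\kappa}\,dz$, and your interpolation $\||z|^{s}P\|_{L^{2}}\leqslant\|\widehat P'\|_{L^{2}}^{2-s}\|\widehat P''\|_{L^{2}}^{s-1}$ with $s=\alpha+\kappa+\gamma/2$, together with the Cauchy--Schwarz step on $\{|z|\geqslant 1\}$ using $\gamma>1$ for integrability of $|z|^{-\gamma}$, is exactly what the paper does. The gap is precisely where you flagged it, and it cannot be closed along the lines you sketch. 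The interpolation inequality requires an auxiliary exponent $s''\geqslant 1$, while integrability of $|z|^{2(\alpha-s'')}$ near the origin requires $s''<\alpha+\tfrac12$; these are compatible only if $\alpha>\tfrac12$, which is not assumed, and the observation $|z|^{2s}\leqslant|z|^{2}$ on $\{|z|\leqslant1\}$ only leads back to the same constraint. More decisively, no bookkeeping can work: take $\widehat P(k)=\phi((k-R)/R)$ for a fixed bump $\phi$ supported in $[\tfrac14,\tfrac74]$ after rescaling, so the hypotheses hold for $R$ large. Then $I_{1}=\int_{|z|\leqslant1}|P(z)||z|^{\alpha}(2+|z|)^{\kappa}dz\geqslant c\,R^{-\alpha}$, whereas $\|\widehat P'\|_{L^{2}}^{2-s}\|\widehat P''\|_{L^{2}}^{s-1}\sim R^{\frac12-s}$ with $s>\alpha+\tfrac12$, so the ratio blows up as $R\to\infty$ and $I_{1}$ is \emph{not} dominated by the claimed right-hand side uniformly in $\widehat P$. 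So the delicate exponent-matching you defer is not merely delicate; it is impossible in the stated generality.

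You should also know that the paper's proof does not resolve this either: after the analogue of your reduction it simply replaces $\int_{\R}|P(z)||z|^{\alpha}(2+|z|)^{\kappa}dz$ by $\int_{\{|z|\geqslant\delta\}}|P(z)||z|^{\alpha+\kappa}dz$, discarding the region $|z|<\delta$. The hypothesis $\text{supp}(\widehat P)\cap(-\delta,\delta)=\emptyset$ lives in the Fourier variable, while $z$ in that integral is the physical convolution variable of $P$, so this restriction is not justified as written; the small-$|z|$ contribution is exactly the part your proposal identifies and the paper skips. In the one place the corollary is actually used (the Gaussian-type kernel $\widehat Q(k)e^{-4k^{2}\varepsilon^{-2}T}$ in the proof of Lemma~\ref{lem:lemma1}) the kernel has additional decay not encoded in the corollary's hypotheses, and one would have to re-derive the needed bound using that extra structure rather than the corollary as stated. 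In short: your proposal is incomplete, but the missing piece is a genuine defect of the statement and of the paper's own argument, not merely of your write-up.
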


\begin{proof}
	From~\eqref{e:cent} we obtain using H\"older's inequality
\[
\begin{split}  
\|P\ast D(\varepsilon\cdot)\|_{C^0_\kappa}
& \leqslant  C\varepsilon^\alpha \int_{\R}|P(z)||z|^\alpha (2+|z|)^\kappa dz \|D\|_{C^{0,\alpha}_\kappa} \\
&\leqslant  C\varepsilon^\alpha\int_{\{|z|\geqslant \delta\}} |P(z)||z|^{\alpha+\kappa} dz\|D\|_{C^{0,\alpha}_\kappa} \\
& \leqslant  C\varepsilon^\alpha  \Big(\int_{\{|z|\geqslant \delta\}} |P(z)|^2|z|^{2\alpha+2\kappa+\gamma} dz\Big)^{1/2} \|D\|_{C^{0,\alpha}_\kappa}\\
& \leqslant  C\varepsilon^\alpha  \Big(\int_{\mathbb{R}} |zP(z)|^2|z|^{2\alpha+2\kappa+\gamma-2} dz\Big)^{1/2} \|D\|_{C^{0,\alpha}_\kappa}.
\end{split}
\]
Now as the exponent $2\alpha+2\kappa+\gamma-2\in(0,2)$, 
we can use H\"older inequality to bound the integral above by the integral over  $|z|^2|P(z)|^2$ and  $|z|^4|P(z)|^2$, 
which in turn gives the $L^2$-norm of $\widehat{P}'$ and $\widehat{P}''$.
We obtain
\[\begin{split}  
\int_{\mathbb{R}} |zP(z)|^2|z|^{2\alpha+2\kappa+\gamma-2} dz
&\leqslant \Big( \int_{\mathbb{R}}   |zP(z)|^2 dz  \Big)^{2-\alpha-\kappa-\frac{\gamma}2} 
\Big( \int_{\mathbb{R}}    |z^2P(z)|^2 dz   \Big)^{\alpha+\kappa+\frac{\gamma}2-1 }\\
&\leqslant \|\widehat{P}'\|_{L^2}^{2(2-\alpha-\kappa-\frac{\gamma}2)}  \|\widehat{P}''\|_{L^2}^{2(\alpha+\kappa+\frac{\gamma}2-1) },
\end{split}\]
which implies the claim.
\end{proof}


\subsection{Applications of Fourier estimates}


Now we rephrase the bounds of the previous subsection to bound operators given by a Fourier multiplier, as for example
in the statement of the exchange Lemmas~\ref{lem:lemma1}, \ref{lem:lemma2}, and \ref{lem:lemma3}.  
Another  example we have in mind are bounds on the semigroup generated by the Swift-Hohenberg operator which is presented later in Corollary \ref{cor:SGbound}.

In the first step we use regularity of the kernel to bound the operator.
\begin{lemma}
\label{lem:ours}
	Let $m>\frac{1}{2}$ and $\mathcal{H}\cdot=H\star\cdot$ be an operator such that the Fourier transform 
	$\hat{H}$ of the kernel $H$ is in $H^m(\R)$.
	For any $\kappa \in (0,  m -\frac12)$ there is a constant such that  
	 for all $u\in C^0_\kappa$ we have  
	\[ 
	\|\mathcal{H}  u\|_{C^0_\kappa} \leqslant  C \|\hat{H}\|_{H^m(\R)} \| u\|_{C^0_\kappa} \;.
	\]
\end{lemma}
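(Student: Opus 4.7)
The plan is to move the weight from $x$ onto $y$ inside the convolution integral via Peetre's inequality, and then bound the resulting $L^1$-integral against the kernel by Cauchy--Schwarz, turning one factor into a weighted $L^2$-norm of $H$ that Plancherel identifies with $\|\hat H\|_{H^m}$. The condition $\kappa < m - 1/2$ will be exactly what makes the remaining scalar integral converge.

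First I would write out
\[
|(\mathcal{H}u)(x)| \leqslant \int_\R |H(y)|\,|u(x-y)|\, dy \leqslant \|u\|_{C^0_\kappa} \int_\R |H(y)|\,(1+(x-y)^2)^{\kappa/2}\, dy
\]
using the definition \eqref{e:C0rho}. Then I apply Peetre's inequality, namely $(1+(x-y)^2)^{\kappa/2} \leqslant 2^{\kappa/2}(1+x^2)^{\kappa/2}(1+y^2)^{\kappa/2}$ (valid since $\kappa>0$), which factorizes the $x$-dependence:
\[
w_\kappa(x)\,|(\mathcal{H}u)(x)| \leqslant C\, \|u\|_{C^0_\kappa} \int_\R |H(y)|\,(1+y^2)^{\kappa/2}\, dy.
\]
Taking the supremum in $x$ reduces the problem to bounding the scalar integral on the right.

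For this integral I apply Cauchy--Schwarz by inserting and removing the weight $(1+y^2)^{m/2}$:
\[
\int_\R |H(y)|\,(1+y^2)^{\kappa/2}\, dy \leqslant \Bigl(\int_\R |H(y)|^2 (1+y^2)^m\, dy\Bigr)^{1/2} \Bigl(\int_\R (1+y^2)^{\kappa-m}\, dy\Bigr)^{1/2}.
\]
The second factor is finite exactly when $2(m-\kappa) > 1$, i.e.\ $\kappa < m - 1/2$, which is our hypothesis. The first factor is, by Plancherel and the standard equivalence of the weighted $L^2$-norm $\int (1+y^2)^m |H(y)|^2\, dy$ with $\|\hat H\|_{H^m}^2$ (writing $(1+y^2)^m$ as $\sum_{k \leqslant m}$ of powers $y^{2k}$ for integer $m$, and interpolating for non-integer $m$), equivalent to $\|\hat H\|_{H^m}$ up to a constant depending only on $m$. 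Combining the two estimates and taking the supremum in $x$ yields the stated bound.

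There is no real obstacle here: every step is standard once the weight is split off by Peetre. The only point that needs a brief justification is the $L^2$-equivalence $\int (1+y^2)^m |H(y)|^2\,dy \sim \|\hat H\|_{H^m}^2$ for non-integer $m$, which follows from the Bessel potential characterization of $H^m$ together with Plancherel; alternatively, since $m - 1/2 > \kappa > 0$, one may take $m$ above any integer threshold larger than the one desired without loss, making this step trivial.
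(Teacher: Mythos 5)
Your proposal is correct and follows essentially the same route as the paper's proof: split off the weight inside the convolution via Peetre's inequality $1+(x-y)^2\leqslant 2(1+x^2)(1+y^2)$, apply Cauchy--Schwarz against the weight $(1+y^2)^{m/2}$ (with convergence of the scalar integral guaranteed precisely by $m-\kappa>\tfrac12$), and identify the weighted $L^2$-norm of $H$ with $\|\hat H\|_{H^m}$ by Plancherel. Your extra remark on the non-integer-$m$ equivalence is a harmless elaboration of a step the paper takes as the definition of the $H^m$-norm.
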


\begin{proof} By the definition of the convolution
	\begin{equation*}
	\begin{split}
	\mathcal{H}  u (x) &= \int_{\R} H(x-y) u(y) dy = C\int_{\R} H(z) u(x-z) dz \\
	&= \int_{\R} H(z) (1+(x-z)^2)^{\kappa/2}  w_\kappa(x-z) u(x-z) dz.
	\end{split}
	\end{equation*}
	Now we use
	\[1+(x-z)^2 \leqslant  2(1+z^2)(1+x^2)\]
	to obtain
	\begin{equation*}
\begin{split}
	|\mathcal{H}  u (x)|
	&= C(1+x^2)^{\kappa/2}  \int_{\R}|H(z)| (1+z^2)^{\kappa/2}  dz \| u\|_{C^0_\kappa}\\
	& \leqslant   C(1+x^2)^{\kappa/2}  \Big( \int_{\R}(1+z^2)^{-m+\kappa} dz\Big)^{\frac{1}{2}}    \Big( \int_{\R}|H(z)|^2 (1+z^2)^{m}  dz\Big)^{\frac{1}{2}}  \| u\|_{C^0_\kappa}.
\end{split}
	\end{equation*}
	We finish the proof by noting that $ m -\kappa >\frac12$ by assumption, and that by Plancherel theorem
	\[
	\int_{\R}|H(z)|^2 (1+z^2)^{m}  dz =\|\hat{H}\|_{H^m(\R)} \;.   \qedhere
	\]
\end{proof}
In order for the previous Lemma to be useful in our case, we have to control the $H^m$-norm of the kernel. 
This is straightforward for the Swift-Hohenberg semigroup 
if we add a smooth projection on bounded Fourier domains. 
\begin{lemma}\label{lem:thisone}
	Fix $m\in[0,1)$ and $\ell\in\mathbb{Z}$. Consider  $\widehat{P} :
	\mathbb{R} \rightarrow [0, 1]$ smooth with $\textup{supp} (\widehat{P}) \subset [\ell - 2 \delta, \ell + 2
	\delta]$ for some  $0<\delta < 1 / 2$. Then it holds that
	\[
		\sup_{t \in [0, T_0 \varepsilon^{-2}]} \|\widehat{P}e^{\lambda_\nu t}\|_{H^m}\leqslant  C \varepsilon^{-\max\{0,m-\frac12\}}
	\]
	where $\lambda_\nu(k)=-(1-k^2)^2+\nu\varepsilon^2= - (1 - k)^2  (1+ k)^2 + \nu \varepsilon^2$ is the Fourier symbol of the operator $\cL_\nu$.
\end{lemma}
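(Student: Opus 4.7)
The plan is to reduce the lemma to estimating two explicit quantities, $\|F(\cdot,t)\|_{L^2}$ and $\|F(\cdot,t)\|_{H^1}$, where $F(k,t)=\widehat{P}(k)e^{\lambda_\nu(k)t}$, and then to use the standard interpolation
\[
\|F\|_{H^m}\leqslant C\|F\|_{L^2}^{1-m}\|F\|_{H^1}^{m}\qquad (0\leqslant m\leqslant 1)
\]
pointwise in $t$, taking the supremum over $t\in[0,T_0\varepsilon^{-2}]$ at the very end. The idea is that the uniform (in $t$) bounds alone are too weak to give the claimed exponent $-\max\{0,m-\tfrac12\}$; one must exploit that $\|F(\cdot,t)\|_{L^2}$ decays in $t$ at the same rate at which $\|F'(\cdot,t)\|_{L^2}$ grows, so that the interpolated exponent is sharper than simple worst-case bounds would suggest.

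For $\ell\notin\{\pm 1\}$ the support of $\widehat{P}$ stays away from the zeros of $(1-k^2)^2$, so on $\textup{supp}(\widehat{P})$ we have $\lambda_\nu(k)\leqslant -c+\nu\varepsilon^2\leqslant -c/2$ for $\varepsilon$ small. Then $F$ decays exponentially in $t$, the polynomial factor $\lambda_\nu'(k)\,t$ appearing in $F'$ is absorbed by this decay, and both $\|F\|_{L^2}$ and $\|F'\|_{L^2}$ are uniformly bounded; hence $\|F\|_{H^m}$ is uniformly bounded, which is even stronger than what is claimed.

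The critical case is $\ell=\pm1$; by symmetry take $\ell=1$ and set $s=k-1$. Since $|s|\leqslant 2\delta<1$ on $\textup{supp}(\widehat{P})$, the factorisation $(1-k^2)^2=s^2(2+s)^2\geqslant s^2$ gives $\lambda_\nu(k)\leqslant -s^2+\nu\varepsilon^2$, and $|\lambda_\nu'(k)|=|4k(1-k^2)|\leqslant C|s|$. Standard Gaussian integrals then yield
\begin{align*}
\|F(\cdot,t)\|_{L^2}^{2} &\leqslant Ce^{2\nu T_0}\int_{\mathbb{R}}e^{-2ts^2}\,ds\;\leqslant\; C\min(1,t^{-1/2}),\\
\|F'(\cdot,t)\|_{L^2}^{2} &\leqslant Ce^{2\nu T_0}\int_{\mathbb{R}}\bigl(|\widehat{P}'|^2+|\widehat{P}|^2 s^2 t^2\bigr)e^{-2ts^2}\,ds\;\leqslant\; C(1+t^{1/2}),
\end{align*}
where the second inequality uses $t^{2}\int s^{2}e^{-2ts^{2}}ds\sim t^{1/2}$.

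Combining by the interpolation inequality gives, for $t\geqslant 1$,
\[
\|F(\cdot,t)\|_{H^m}^{2}\;\leqslant\;C\,t^{-(1-m)/2}\cdot t^{m/2}\;=\;C\,t^{m-1/2}.
\]
For $m\leqslant\tfrac12$ this is nonincreasing, so $\sup_{t\in[0,T_0\varepsilon^{-2}]}\|F\|_{H^m}$ is bounded independently of $\varepsilon$. For $m>\tfrac12$ the supremum is attained at $t=T_0\varepsilon^{-2}$, giving $\|F\|_{H^m}^2\leqslant C\varepsilon^{-(2m-1)}$, i.e.\ $\|F\|_{H^m}\leqslant C\varepsilon^{-(m-1/2)}$, which is exactly the claim. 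The main technical point — and the step where carelessness would cost the sharpness — is the pairing of the pointwise-in-$t$ decay of the $L^2$ norm with the pointwise-in-$t$ growth of the $H^1$ norm before applying interpolation; using the uniform bounds $\|F\|_{L^2}\leqslant C$ and $\|F\|_{H^1}\leqslant C\varepsilon^{-1/2}$ separately and then interpolating yields only the weaker $\varepsilon^{-m/2}$.
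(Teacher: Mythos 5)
Your proof is correct and follows essentially the same route as the paper: isolate the critical case $\ell=\pm1$, translate to the critical wavenumber so the symbol behaves like $-s^2$, establish the time-dependent bounds $\|F\|_{L^2}^2\lesssim t^{-1/2}$ and $\|F\|_{H^1}^2\lesssim t^{1/2}$ via Gaussian integrals, and interpolate these pointwise in $t$ before taking the supremum. The remark that interpolating the uniform-in-$t$ bounds would lose sharpness is exactly the point the paper's computation is built around.
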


\begin{corollary}
\label{cor:SGbound}
Consider the Fourier projection $\mathcal{P}=P\star\cdot$ with $\widehat{P}$ as in the lemma above,
then we obtain in case $\kappa \in (0, \frac14)$ with $m=\frac12+\kappa$ that 

	\[
		\|e^{\mathcal{L}_\nu t}\mathcal{P}u\|_{C^0_\kappa}\leqslant  C\|\widehat{P}e^{\lambda_\nu t}\|_{H^m}\|u\|_{C^0_\kappa}
		\leqslant  C \varepsilon^{-\kappa} \|u\|_{C^0_\kappa}\;.
	\]
	for all $t\in[0,T_0\varepsilon^{-2}]$.
\end{corollary}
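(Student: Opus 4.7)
The plan is to recognize $e^{t\mathcal{L}_\nu}\mathcal{P}$ as a Fourier multiplier, so that both preceding lemmas can be applied in sequence. Since $\mathcal{L}_\nu$ is translation invariant with symbol $\lambda_\nu(k)=-(1-k^2)^2+\nu\varepsilon^2$ and $\mathcal{P}$ is convolution with $P$, the composition is again a convolution operator $\mathcal{H}u=H_t\ast u$ whose kernel $H_t$ has Fourier transform $\widehat{H_t}(k)=\widehat{P}(k)e^{\lambda_\nu(k) t}$. In particular, the hypothesis $\textup{supp}(\widehat{P})\subset[\ell-2\delta,\ell+2\delta]$ inherited from Lemma~\ref{lem:thisone} ensures that $\widehat{H_t}$ has the same compact support for every $t$.

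Next I would feed this operator $\mathcal{H}$ into Lemma~\ref{lem:ours}. Taking $m$ slightly larger than $\tfrac12+\kappa$ (so that the strict requirement $\kappa<m-\tfrac12$ of Lemma~\ref{lem:ours} is satisfied) yields the first inequality
\[
\|e^{t\mathcal{L}_\nu}\mathcal{P} u\|_{C^0_\kappa}\leqslant C\,\|\widehat{P}e^{\lambda_\nu t}\|_{H^m}\,\|u\|_{C^0_\kappa}.
\]
Because $\kappa\in(0,\tfrac14)$ we have $m\in(\tfrac12,\tfrac34)\subset[0,1)$, which is precisely the range in which Lemma~\ref{lem:thisone} applies. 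Invoking that lemma immediately supplies the uniform bound
\[
\sup_{t\in[0,T_0\varepsilon^{-2}]}\|\widehat{P}e^{\lambda_\nu t}\|_{H^m}\leqslant C\,\varepsilon^{-(m-1/2)}=C\,\varepsilon^{-\kappa}
\]
(taking $m=\tfrac12+\kappa$ in the limit, or absorbing the infinitesimal loss $\varepsilon^{-o(1)}$ into the constant). Chaining the two estimates produces the claimed double inequality.

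The structure of the argument is therefore essentially a ``plug and chug'' between two already-established results, and no new analysis is needed. The only technical point to keep an eye on is the borderline case $m-\tfrac12=\kappa$: Lemma~\ref{lem:ours} requires a strict gap, and the integral $\int(1+z^2)^{-m+\kappa}dz$ appearing in its proof barely diverges when the gap closes. I would resolve this either by picking $m=\tfrac12+\kappa+\eta$ with $\eta>0$ arbitrary and absorbing the resulting $\varepsilon^{-\eta}$ into the constant $C=C(\kappa,\eta)$, or by slightly sharpening the proof of Lemma~\ref{lem:ours} to allow the endpoint. Either way, the bound of Corollary~\ref{cor:SGbound} follows. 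No step here is a genuine obstacle; the corollary is precisely the packaging that makes the Swift--Hohenberg semigroup (after a smooth Fourier localization) available as a bounded operator on weighted $C^0$-spaces, losing at most the logarithmic-type factor $\varepsilon^{-\kappa}$.
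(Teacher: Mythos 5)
Your proposal matches the paper's (implicit) argument exactly: the corollary is stated without separate proof as the direct composition of Lemma~\ref{lem:ours} (applied to the convolution kernel with Fourier transform $\widehat{P}e^{\lambda_\nu t}$) with the kernel bound of Lemma~\ref{lem:thisone}, and you correctly flag the borderline issue that the stated choice $m=\frac12+\kappa$ violates the strict requirement $\kappa<m-\frac12$. One caveat on your fix: you cannot literally ``absorb $\varepsilon^{-\eta}$ into the constant,'' since that factor blows up as $\varepsilon\to0$; the honest repair is to take $m=\frac12+\kappa+\eta$ and accept the slightly worse bound $C\varepsilon^{-\kappa-\eta}$, which is harmless because $\kappa$ (and hence the total loss) is taken arbitrarily small wherever the corollary is invoked.
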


\begin{proof}[Proof of Lemma~\ref{lem:thisone}]
For the proof we only focus on the most complicated case $\ell=1$, i.e.~with $\textup{supp}(P)\subset[1-2\delta,1+2\delta]$.  The  
case $\ell=-1$ is almost verbatim and for $|\ell|\not=1$ the proof is actually much simpler, 
as  $\lambda_\nu \leqslant -c <0$ is strictly negative there and we obtain exponentially small terms.

 A straightforward calculation shows
	\begin{equation*}
	\begin{split}
		\| \widehat{P} e^{\lambda_\nu t} \|^2_{H^1} & =  \| \widehat{P} (\cdot - 1)
		e^{\lambda_\nu (\cdot - 1) t} \|^2_{H^1 (- 2 \delta, 2 \delta)}\\
		& \leqslant C
		\int_{- 2 \delta}^{2 \delta} e^{- 2 (2 - k)^2 k^2 t} d k \\
		& \qquad + C \int_{- 2 \delta}^{2 \delta} 4 (2 (k - 2) k^2 + (k - 2)^2 k)^2 t^2
		e^{- 2 (2 - k)^2 k^2 t} d k\\
		& \leqslant C \int_{- 2 \delta}^{2 \delta}
		e^{- C_{\delta} k^2 t} d k + C \int_{- 2 \delta}^{2 \delta} (k^2 + k)^2
		t^2 e^{- C_{\delta} k^2 t} d k\\
		& \leqslant  C + C \int_{- 2 \delta}^{2 \delta} k^2
		t^2 e^{- C_{\delta} k^2 t} d k.
	\end{split}
	\end{equation*}
	Now we have to consider two cases, depending on $t$. First if $t \leqslant 1$, then
	also the second integral can be bound by a constant $C$. If $t > 1$, then we can
	continue with the  substitution  $l = \sqrt{t} k$, which gives
	$d l = \sqrt{t} d k$, and we derive
	\begin{equation*}
		\begin{split}
		\| \widehat{P} e^{\lambda_\nu t} \|^2_{H^1} & \leqslant  C + C \int_{- 2
			\delta \sqrt{t}}^{2 \delta \sqrt{t}} l^2  \sqrt{t} e^{- Cl^2} d l
	   \leqslant  C + C   \sqrt{t} \int_{- \infty}^{\infty} l^2  e^{- Cl^2} d l\\
		& \leqslant  C \sqrt{t} \;.
		\end{split}
	\end{equation*}
	Thus
	\[ \| \widehat{P}  e^{\lambda_\nu t} \|_{H^1}^2 \leqslant \left\{
	\begin{array}{ll}
	C \sqrt{t} \;, & t \geqslant 1\\
	C \;, & t \leqslant 1
	\end{array} \right. \;.\]
	In a similar way, we can consider the bounds in $L^2$:
	\begin{equation*}
		\| \widehat{P}  e^{\lambda_\nu  t} \|^2_{L^2} \leqslant C \int_{- 2
			\delta}^{2 \delta} e^{- C_{\delta} k^2 t} d k 
		 = C \int_{- 2 \delta \sqrt{t}}^{2 \delta \sqrt{t}} \frac{1}{\sqrt{t}}
		e^{- C_{\delta} l^2} d l 
		 \leqslant \left\{
	\begin{array}{ll}
	C t^{-1/2} \;, & t \geqslant 1\\
	C  \;,& t \leqslant 1
	\end{array} \right. 
		\;.
	\end{equation*}
	We finally get to the bounds in $H^m$ by interpolation:
	\begin{equation*}
		\| \widehat{P}  e^{\lambda_\nu  t} \|_{H^m} 
		 \leqslant  C \| \widehat{P}e^{\lambda_\nu  t} \|^{1 - m}_{L^2}  \| \widehat{P} e^{\lambda_\nu t} \|^m_{H^1}
		 \leqslant  \left\{ \begin{array}{ll}
			C\;,&  t \leqslant 1\\
			Ct^{- \frac{1}{4}+\frac{m}{2}}\;,&   t \geqslant 1
		\end{array} \right\} \leq C \varepsilon^{-\max\{0,m-\frac12\}}
	\end{equation*}
	for all $t\leqslant T_0\varepsilon^{-2}$.
\end{proof}


\subsection{Proof of Exchange Lemma II}


For the proof of Lemma~\ref{lem:lemma3} we write the differences of semigroups as convolution operators.

First we define a smooth Fourier multiplier that cuts out regions around $\pm1$ in Fourier space, 
where the eigenvalues of the Swift-Hohenberg operator are close to $0$.
Fix a small $\delta>0$ independent of $0<\varepsilon\ll 1$ and consider a smooth function $\widehat{P}:\R\to[0,1]$ such that  
$\textup{supp}(\widehat{P})=[-1-2\delta, -1+2\delta]\cup[1-2\delta, 1+2\delta]$
and $\widehat{P}=1$ on $[-1-\delta, -1+\delta]\cup[1-\delta, 1+\delta]$.
We define  $\hat{Q}=1-\widehat{P}^2$ and let $\mathcal{Q}=I-\mathcal{P}^2$.

Now we obtain
\[ 
e^{T \varepsilon^2 \mathcal{L}_{\nu}} [D (\varepsilon \cdot) \mathrm{e}_3] 
=
e^{T \varepsilon^2 \mathcal{L}_{\nu}} \mathcal{P} [\mathcal{P}D
(\varepsilon \cdot) \mathrm{e}_3] + e^{T \varepsilon^2 \mathcal{L}_{\nu}}
\mathcal{Q} [D (\varepsilon \cdot) \mathrm{e}_3]
\]
and we bound separately the two terms. For the first term we use the semigroup
estimate from Corollary \ref{cor:SGbound} (with $\ell=\pm1$), 
using the $H^\alpha$-estimate on the kernel and Lemma \ref{lem:guidos}.
Note that for the application we need to split the estimate
into two terms: one concentrated around $1$ and the other around $-1$.
We obtain
\[ 
\| e^{T \varepsilon^2 \mathcal{L}_{\nu}} \mathcal{P} [\mathcal{P}D
(\varepsilon \cdot) \mathrm{e}_3] \|_{C_{\kappa}^0}
\leqslant C \varepsilon^{-\kappa}
\| \mathcal{P}D
(\varepsilon \cdot) \mathrm{e}_3 \|_{C_{\kappa}^0}
\leqslant C
\varepsilon^{\alpha-\kappa}  \| D \|_{C_{\kappa}^{0, \alpha}}\;.
\]
For the second term we need some more work. We start by writing
\[ 
e^{T \varepsilon^2 \mathcal{L}_{\nu}} \mathcal{Q} [D (\varepsilon \cdot)
\mathrm{e}_3] = (\mathcal{H}_T D) (\varepsilon \cdot) \mathrm{e}_3
\]
and denoting the kernel of $\mathcal{H}_T=H_T\star$ by
\[ 
{\hat{H}_T} (k) = e^{\nu T} e^{- T \varepsilon^{- 2}  (4 + k \varepsilon)^2  (2 + k
	\varepsilon)^2}  \hat{Q} (3 + \varepsilon k) \;.
\]
In view of Lemma \ref{lem:ours} we only need to bound the $H^1$-norm of the kernel ${\hat{H}_T}$.
Therefore, we  split the $H^1$-norm into two different areas in Fourier space
\[ \| {\hat{H}_T} \|^2_{H^1 (\mathbb{R})} \leqslant 2 \| {\hat{H}_T} \|^2_{H^1 \left( \left[ -
	\frac{c}{\varepsilon}, \frac{c}{\varepsilon} \right] \right)} + 2 \| {\hat{H}_T}
\|^2_{H^1 \left( \left[ - \frac{c}{\varepsilon}, \frac{c}{\varepsilon}
	\right]^C \right)}\;.
	\]
Note first that both $\hat{Q}$ and $\hat{Q}'$ are bounded smooth functions independent of $\varepsilon$.
Then we use in the first term that $\varepsilon | k | \leqslant C$ and that $| T | $ is bounded.

Thus ${\hat{H}_T}$ is uniformly bounded on $[ -\frac{c}{\varepsilon}, \frac{c}{\varepsilon}]$ by $ C e^{- TC_0 \varepsilon^{- 2}}$ 
where $- C_0$ is the level
where we cut out the two bumps around $- 2 / \varepsilon$ and $- 4 /\varepsilon$. Note that $\hat{Q}$  is identically zero there.
With similar arguments we show that the derivative ${\hat{H}_T}'$ is uniformly bounded by
$ C \varepsilon (1+T\varepsilon^{-2}) e^{- TC_0 \varepsilon^{- 2}}$. Thus
\begin{equation*}
	\begin{split}
		\| {\hat{H}_T} \|^2_{H^1 \left( \left[ - \frac{c}{\varepsilon}, \frac{c}{\varepsilon}
		\right] \right)}
		& \leqslant  C \int_{-\frac{c}{\varepsilon}}^{\frac{c}{\varepsilon}} (1 + \varepsilon^2+ T^2 \varepsilon^{-2}) e^{- 2TC_0 \varepsilon^{- 2}} dk\\
		& \leqslant  C \varepsilon^{- 1}  (1 + T^2 \varepsilon^{- 2}) e^{-2 TC_0
		\varepsilon^{- 2}}\\
		& \leqslant  C \varepsilon^{- 1}  e^{- TC_0 \varepsilon^{- 2}}\\
		& \leqslant  C \varepsilon^{4 \gamma - 1} T^{- 2 \gamma}\;,
	\end{split}
\end{equation*}
where we used first that $xe^{-x} \leqslant  1$ and then that $e^{-x} \leqslant  C_\gamma x^{-\gamma}$.
The final estimate is not necessary at this point, but it is still sufficient for our purposes, 
as other terms in the estimate are bounded by this weaker estimate.

Now we have to consider the case $\varepsilon | k | > c$ 
when we are away from the bumps.  In this case, by adjusting $c$ we can use that $\hat{Q}$ is a constant.
Moreover, the bound for negative and positive $k$ is the same, so we restrict ourselves to the case with $k>c/\varepsilon$. 
\begin{equation*}
	\begin{split}
		\| {\hat{H}_T} \|^2_{H^1 \left( \left[ \frac{c}{\varepsilon}, + \infty \right)
		\right)} & \leqslant \int_{\frac{c}{\varepsilon}}^{\infty} e^{2 T
		\varepsilon^{- 2}  (4 + k \varepsilon)^2  (2 + k \varepsilon)^2} dk \\
		&\quad + \int_{\frac{c}{\varepsilon}}^{\infty} e^{2 T \varepsilon^{- 2}  (4 +
		k \varepsilon)^2  (2 + k \varepsilon)^2}  (T \varepsilon^{- 2}  (4 + \varepsilon k)  (2+ \varepsilon k)  (3 +\varepsilon k) \varepsilon)^2 dk\\
	& \leqslant \frac{1}{\varepsilon}  \int_c^{\infty} e^{2
		T \varepsilon^{- 2}  (4 + k)^2  (2 + k )^2} dk \\
	& \quad + \frac{C}{\varepsilon}  \int_c^{\infty} e^{2 T \varepsilon^{- 2}  (4
		+ k )^2  (2 + k )^2}  (T \varepsilon^{- 1}  (4 + k) 
	(2 + k)  (3 + k))^2 dk \;.
	\end{split}
\end{equation*}
Now we use that $(4 + k)^2  (2 + k)^2 \geqslant k^4$ and $(4 + k)  (2 + k)  (3
+ k) \leqslant C (k^3 + 1) \leqslant Ck^3$, for $|h|> \frac{C}{\varepsilon}$ with an $\varepsilon$-independent constant $C$, so that
\begin{equation*}
	\begin{split}
		\| {\hat{H}_T} \|^2_{H^1 \left( \left[ \frac{c}{\varepsilon}, + \infty \right)
		\right)} & \leqslant  \frac{C}{\varepsilon}  \int_c^{\infty} e^{- CT
		\varepsilon^{- 2} k^4} dk + CT^2 \varepsilon^{- 3}  \int_c^{\infty} k^3 e^{-
		CT \varepsilon^{- 2} k^4} dk\\
		& =
		\frac{C}{\varepsilon}  (T \varepsilon^{- 2})^{- \frac{1}{4}}  \int_{c (T
		\varepsilon^{- 2})^{\frac{1}{4}}}^{\infty} e^{- Ck^4} dk \\
		& \quad+ CT^2 \varepsilon^{- 3}  (T \varepsilon^{- 2})^{- \frac{1}{4}} 
		\int_{c (T \varepsilon^{- 2})^{\frac{1}{4}}}^{\infty} k^3  (T \varepsilon^{-
		2})^{- \frac{3}{4}} e^{- Ck^4} dk\\
		& \leqslant \frac{C}{\varepsilon}  (T \varepsilon^{- 2})^{- \frac{1}{4}} 
		\int_{c (T \varepsilon^{- 2})^{\frac{1}{4}}}^{\infty} e^{- Ck^4} dk + T
		\varepsilon \int_{0}^{\infty} k^3 e^{-
		Ck^4} dk \\
		& \leqslant  \frac{C}{\varepsilon}  (T \varepsilon^{- 2})^{- \frac{1}{4}} 
	\int_{c (T \varepsilon^{- 2})^{\frac{1}{4}}}^{\infty} e^{- Ck^4} dk + C\varepsilon \;.
	\end{split}
\end{equation*}
For the remaining term we use that for $\alpha \geqslant 0$ 
\[ 
	\sup_{z>0}\Big\{z^{\alpha}  \int_z^{\infty} e^{- ck^4} dk\Big\} <\infty,
\]
to obtain for $\gamma= (1+\alpha)/8 \geqslant 1/8 $ 
\[
\| {\hat{H}_T} \|^2_{H^1 \left( \left[ \frac{c}{\varepsilon}, + \infty \right)
		\right)} 
		 \leqslant  C \varepsilon^{4\gamma - 1} T^{- 2 \gamma}+   C\varepsilon \;.
\]
Note finally,  that for $\gamma<1/2$ and bounded $T$ 
we have $\varepsilon \leqslant  C \varepsilon^{4\gamma - 1} T^{- 2 \gamma}$ and 
we can neglect the $ C\varepsilon$ in the estimate above.


\subsection{Proof of Exchange Lemma I}


The proof of the Exchange Lemma I stated in Lemma~\ref{lem:lemma2} is similar to the one for the Exchange Lemma II in Lemma~\ref{lem:lemma3}, but requires additional arguments.
We start again by smoothly projecting in Fourier space, but now in $k=1$ and $k=3$.

Fix a small $\delta>0$ and consider for $\ell\in\mathbb{Z}$ a smooth function $\hat{P_\ell}:\R\to[0,1]$ such that  
$\textup{supp}(\hat{P_\ell})=[-\ell-2\delta, -\ell+2\delta]\cup[\ell-2\delta, \ell+2\delta]$
and $\widehat{P}_\ell=1$ on $[-\ell-\delta, -\ell+\delta]\cup[\ell-\delta, \ell+\delta]$.

Now we can rewrite:
\begin{multline*}
	e^{t\cL_\nu}[D(\varepsilon \cdot)\e_1] - (e^{\Delta_\nu T}D)(\varepsilon \cdot)\e_1  = \mathcal{P}_3^2 e^{t\cL_\nu}[D(\varepsilon \cdot)\e_1] + \mathcal{P}_1^2 e^{t\cL_\nu}[D(\varepsilon \cdot)\e_1]\\ 
	- \mathcal{P}_1^2 (e^{\Delta_\nu T}D)(\varepsilon \cdot)\e_1 +(1-\mathcal{P}_1^2-\mathcal{P}_3^2)e^{t\cL_\nu}[D(\varepsilon \cdot)\e_1] -(1-\mathcal{P}_1^2)(e^{\Delta_\nu T}D)(\varepsilon \cdot)\e_1.
\end{multline*}
Now the first term on the right hand side is bounded the same way as the second term in the proof of the Exchange Lemma II (Lemma~\ref{lem:lemma3}) in the previous section.
Also the last two terms can be controlled in a similar way as the first term in the proof of Lemma~\ref{lem:lemma3}.
We only need the semigroup
estimate from Corollary \ref{cor:SGbound} (now for $\ell=\pm1$ and $\ell=\pm3$) and Lemma \ref{lem:guidos}.

Let us focus on the missing two terms:
\[
	\mathcal{P}_1^2 e^{t\cL_\nu}[D(\varepsilon \cdot)\e_1] - \mathcal{P}_1^2 (e^{\Delta_\nu T}D)(\varepsilon \cdot)\e_1 =: \mathcal{H}D,
\]
with $\mathcal{H}\cdot=H\star\cdot$ such that $\textup{supp}(\hat{H})\subset (-2\delta/\varepsilon, 2\delta/\varepsilon)$ and 
\[
\begin{split}
	\hat{H} & = \widehat{P}(\varepsilon\cdot)[e^{T\varepsilon^{-2}\lambda_\nu(1+l\varepsilon)}-e^{-4l^2T+\nu T}]\\
	& = \widehat{P}(\varepsilon\cdot)e^{-4Tl^{2}+\nu T}[e^{-4Tl^{3}\varepsilon-l^4T\varepsilon^2}-1].
\end{split}
\]
In view of Lemma \ref{lem:ours} it is enough to show that $\|\hat{H}\|_{H^1(\R)}$ is small. Thus we need the derivative
\begin{multline*}
\frac{d}{dl}\hat{H} = \widehat{P}^\prime(\varepsilon\cdot)e^{-4Tl^{2}+\nu T}[e^{-4Tl^{3}\varepsilon-l^4T\varepsilon^2}-1]-8Tl \widehat{P}(\varepsilon\cdot)e^{-4Tl^{2}+\nu T}[e^{-4Tl^{3}\varepsilon-l^4T\varepsilon^2}-1]\\
-4T\varepsilon l^2(3+4\varepsilon l) \widehat{P}(\varepsilon\cdot)e^{-4Tl^{2}+\nu T}[e^{-4Tl^{3}\varepsilon-l^4T\varepsilon^2}-1].
\end{multline*}
Now we collect the common exponential term in the parenthesis, and then we write the Taylor expansion in $l=0$.
 To get the estimate in $H^1$ we bound both $\hat{H}$ and $\frac{d}{dl}\hat{H}$ in $L^2$.
 
Actually we can do better and provide pointwise estimates (and not just $L^2$).  First of all we observe that by means of Taylor expansion and triangular inequality
\[
  |1-e^z|\leqslant  |z|e^{|z|},
\]
and in our case $z = -4Tl^{3}\varepsilon-l^4T\varepsilon^2$, with $|z|\leqslant  8T\delta l^2 + 4\delta^2Tl^2\leqslant  2Tl^2$ 
if $|\ell| \leqslant  \delta/\varepsilon$ for some small fixed $\delta\leqslant  1/2$.

If we consider $\hat{H}$ we have the following bound:
\begin{equation*}
	\begin{split}
		|\hat{H}| & \leqslant  C_{T_0}e^{-2Tl^2}\left(|4Tl^3\varepsilon|+ |l^4T\varepsilon^2|\right)\\
		& \leqslant  C_{T_0}T^{-1/2}e^{-Tl^2}\left(4\varepsilon+ l\varepsilon^2\right)\\
		& \leqslant  C_{T_0}T^{-1/2}e^{-Tl^2}\left(4+2\delta\right)\varepsilon\;,
	\end{split}
\end{equation*}
where we used the inequality
\[
	e^{-Tl^2}\left(Tl^2\right)^{3/2}\leqslant  C.
\]

In the same way we can bound the derivative of $\hat{H}$:
\[
	\hat{H}^\prime = \left(\varepsilon\widehat{P}^\prime - 8Tl\widehat{P}-4T\varepsilon l^2(3+4\varepsilon l)\widehat{P}\right)e^{-4Tl^{2}+\nu T}[e^{-4Tl^{3}\varepsilon-l^4T\varepsilon^2}-1],
\]
where we have almost $\hat{H}$ with a different prefactor that we can bound by using the previous one on $\hat{H}$ and the fact that $\varepsilon l\leqslant  \delta$ such that
\[
	|\hat{H}^\prime|\leqslant  C(1+Tl)T^{-1/2}e^{-Tl^2}\varepsilon.
\]
Now we use that $\sqrt{T}le^{-1/2\cdot Tl^2}\leqslant  C$, so
\[
	|\hat{H}^\prime|\leqslant  C(1+\sqrt{T})T^{-1/2}e^{-1/2\cdot Tl^2}\varepsilon\;.
\]
Thus using $T\leqslant  T_0$
\[
	\begin{split}
		\|\hat{H}\|_{H^1} & \leqslant  CT^{-1/2}\varepsilon\|^{-1/2\cdot Tl^2}\|_{L^2}\\
		& \leqslant  C T^{-3/4}\varepsilon \;.
	\end{split}
\]


\subsection{Proof of Exchange Lemma IC}


The idea behind this proof is almost the same as before, but the proof itself is technically slightly different, relies on Corollary~\ref{cor:new}, and does not need $L^2$-estimates on the kernel.

We start again by smoothly projecting in Fourier space, but onto the modes  $k=\pm1$ and $k=\pm3$.

Fix a small $\delta>0$ and consider  a smooth function $\widehat{P}:\R\to[0,1]$ such that  
$\textup{supp}(\widehat{P})=[-2\delta, 2\delta]$
and $\widehat{P}=1$ on $[-\delta, \delta]$.
Define now for $\ell\in\mathbb{Z}$ the function 
\[\widehat{P}_\ell:\R\to[0,1] \text{ defined by }\widehat{P}_\ell(k)=\widehat{P}(k-\ell).\]
Now we can rewrite:
\begin{equation}
\label{e:ELIC}
\begin{split}
	e^{t\cL_\nu}[D(\varepsilon \cdot)\e_1] - (e^{\Delta_\nu T}D)(\varepsilon \cdot)\e_1 
	 = & \mathcal{P}_3^2 e^{t\cL_\nu}[D(\varepsilon \cdot)\e_1] \\
	& + \mathcal{P}_1 e^{t\cL_\nu}[D(\varepsilon \cdot)\e_1] - \mathcal{P}_1 (e^{\Delta_\nu T}D)(\varepsilon \cdot)\e_1 \\
	& +(1-\mathcal{P}_1-\mathcal{P}_3^2)e^{t\cL_\nu}[D(\varepsilon \cdot)\e_1] \\ 
	&- (1-\mathcal{P}_1)(e^{\Delta_\nu T}D)(\varepsilon \cdot)\e_1.
\end{split}
\end{equation}
%
%
%
\subsubsection{First term}
%
%
%
Now the first term is bounded the same way as the second term in the proof the Exchange Lemma II (Lemma~\ref{lem:lemma3}).
We only need the semigroup
estimate from Corollary \ref{cor:SGbound} and Lemma \ref{lem:guidos} to obtain.
\[ 
\| e^{T \varepsilon^2 \mathcal{L}_{\nu}} \mathcal{P}_3^2 [D(\varepsilon \cdot) \mathrm{e}_1] \|_{C_{\kappa}^0}
\leqslant C
\varepsilon^{\alpha-\kappa}  \| D \|_{C_{\kappa}^{0, \alpha}}\;.
\]
%
%
%
\subsubsection{Second term}
%
%
We can write the second term in view of Lemma \ref{lem:guidoext}:
\[
 \mathcal{P}_1 e^{t\cL_\nu}[D(\varepsilon \cdot)\e_1] - \mathcal{P}_1 (e^{\Delta_\nu T}D)(\varepsilon \cdot)\e_1 =  \mathcal{H} [D(\varepsilon \cdot)]  \cdot e_1
\]
with convolution operator $ \mathcal{H}_T\cdot = H_T\star\cdot$ with Fourier transform
\[
\begin{split}
 \hat{H}_T(k)& = \widehat{P}(k)[ e^{-T \varepsilon^{-2}k^2( k+2)^2} -  e^{-4k^2 \varepsilon^{-2} T}]e^{\nu T}
 \\& = \widehat{P}(k)[ e^{-T\varepsilon^{-2}(4k^3+k^4)} -  1]e^{-4k^2 \varepsilon^{-2} T}e^{\nu T},
 \end{split}
\]
where  $\hat{H}_T(0)=0$.
Now we bound the $L^2$-norms  of $\hat{H}_T$, $\hat{H}^\prime_T$, and $\hat{H}''_T$, and apply the results in Lemma~\ref{lem:guidoext}.
We can get the following pointwise bound, 
using the support of $\widehat{P}$ together with mean value theorem and $\delta<1/2$
\begin{equation}
\label{e:*}
\begin{split}
 |\hat{H}_T(k)|& \leqslant  C |\widehat{P}(k) T\varepsilon^{-2}|4k^3+k^4| e^{-k^2 \varepsilon^{-2} T}
 \\&  \leqslant  C |\widehat{P}(k)| T\varepsilon^{-2}|k|^3 e^{-k^2 \varepsilon^{-2} T},
 \end{split}
\end{equation}
for all $\gamma \geqslant0$.

We use that for $a>0$ and $\xi>0$
\[
\begin{split}
\int_0^{2\delta}  k^a e^{-\xi k^2} dk  & 
  =  \int_0^{2\delta\sqrt{\xi}} \xi^{-1/2-a/2} k^a e^{- k^2} dk 
 \\&  \leqslant  C \min\{  \xi^{-1-a}\ , \ 1 \}^{1/2}.
 \end{split}
\]
Thus for the $L^2$-norm we integrate the squared inequality \eqref{e:*} 
and use the previous estimate with $a=6$ and $\xi=T\varepsilon^{-2}$ to obtain

\[
\|\hat{H}_T\|_{L^2} \leqslant  C (T\varepsilon^{-2})\min\{ (T\varepsilon^{-2})^{-7}\ , \ 1 \}^{1/4} \leqslant  C \;.
\]

For the first derivative
\[
\begin{split}
 \hat{H}'_T (k)
  =&  \widehat{P}'(k)[ e^{-4T\varepsilon^{-2}(4k^3+k^4)} -  1]e^{-4k^2 \varepsilon^{-2} T}e^{\nu T} \\
 & +  \widehat{P}(k)[ (-T\varepsilon^{-2}(12k^2+4k^3)) e^{-T \varepsilon^{-2}(4k^3+k^4)} e^{-4k^2 \varepsilon^{-2} T}e^{\nu T} \\
  & + \widehat{P}(k)[ e^{-T\varepsilon^{-2}(4k^3+k^4)} -  1] (-8k \varepsilon^{-2} T) e^{-4k^2 \varepsilon^{-2} T}e^{\nu T}.
 \end{split}
\]
As before, 
\[
\begin{split}
 |\hat{H}'_T (k)|
  \leqslant  &  C |\widehat{P}'(k)|  T\varepsilon^{-2} |k|^3 e^{-4k^2 \varepsilon^{-2} T} 
  + C |\widehat{P}(k)| [ (T\varepsilon^{-2}) k^2 +  (T\varepsilon^{-2})^2 k^4 ]e^{-k^2 \varepsilon^{-2} T}   \\
    \leqslant  &  C_\delta |\widehat{P}'(k)|  T\varepsilon^{-2}  e^{- \delta^2  \varepsilon^{-2} T} 
  + C |\widehat{P}(k)|  (T\varepsilon^{-2}) k^2 e^{-k^2 \varepsilon^{-2} T} . 
 \end{split}
\]
Thus for the $L^2$-norm
\[
\|\hat{H}'_T\|_{L^2} \leqslant  C+ C (T\varepsilon^{-2})\min\{ (T\varepsilon^{-2})^{-5}\ ; \ 1 \}^{1/4} \leqslant  C \;.
\]

For the second derivative we obtain similarly
\[
\begin{split}
 |\hat{H}''_T (k)|
  \leqslant  &  
   C_\delta |\widehat{P}'' (k)|  T\varepsilon^{-2}  e^{- \delta^2  \varepsilon^{-2} T} \\
  &  +  C_\delta |\widehat{P}'(k)| [T\varepsilon^{-2} + (T\varepsilon^{-2})^2]  e^{- \delta  \varepsilon^{-2} T} \\
 &+ C |\widehat{P}(k)| [ (T\varepsilon^{-2}) |k|  + (T\varepsilon^{-2})^2 |k|^3 +  (T\varepsilon^{-2})^3 |k|^5]   e^{-k^2 \varepsilon^{-2} T} ,
 \end{split}
\]
so for the $L^2$-norm
\[
\|\hat{H}''_T\|_{L^2} \leqslant  C+ C (T\varepsilon^{-2})\min\{ (T\varepsilon^{-2})^{-3}\ , \ 1 \}^{1/4} \leqslant  C \varepsilon^{-1/2} \;.
\]
Now Lemma \ref{lem:guidoext} yields: 
 \[
\|  \mathcal{P}_1 e^{t\cL_\nu}[D(\varepsilon \cdot)\e_1] - \mathcal{P}_1 (e^{\Delta_\nu T}D)(\varepsilon \cdot)\e_1\|_{C^0_\kappa}
\leqslant  C\varepsilon^\alpha (1+\varepsilon^{-1/2}) \|D\|_{C^{0,\alpha}_\kappa}.
 \]
%
 %
 \subsubsection{Final two terms}
%
%
 Let us now turn to the last two terms in~\eqref{e:ELIC} where we need Corollary \ref{cor:new}.
Both are bounded in a similar way. We focus only on the last one.
For the other one, we cut out a small part in the middle and then bound the infinite rest as done here. 
Recall that the argument is slightly asymmetric, as we only have a $\mathcal{P}_1$ but a $\mathcal{P}_3^2.$

We have
\[ 
(1-\mathcal{P}_1)(e^{\Delta_\nu T}D)(\varepsilon \cdot)\e_1
=  \mathcal{H} [D(\varepsilon \cdot)]  \cdot e_1,
\]
with convolution operator $ \mathcal{H}_T = H_T\star$ with Fourier transform
\[
 \hat{H}_T(k) = \hat{Q}(k) e^{-4k^2 \varepsilon^{-2} T}e^{\nu T},
\]
where  $\hat{H}_T(0)=0$ and we defined here $\hat{Q}(k)=1-\widehat{P}(k)$, which is slightly different $\hat{Q}$ as defined before, 

but it has the same properties. It is smooth,  has support outside of $[-\delta,\delta]$, and is constant outside $[-2\delta,2\delta]$.
The bounded support is a key point in the argument for this Exchange Lemma, because the $L^2$-norm is not small uniformly in $T$, 
so we need to use Corollary~\ref{cor:new} instead of Lemma~\ref{lem:guidoext}.

Now 

\begin{align*}
 \hat{H}_T'(k) &= \hat{Q}'(k) e^{-4k^2 \varepsilon^{-2} T}e^{\nu T} - \hat{Q}(k)T\varepsilon^{-2}8k  e^{-4k^2 \varepsilon^{-2} T}e^{\nu T},\\
  \hat{H}_T''(k) &= \hat{Q}''(k) e^{-4k^2 \varepsilon^{-2} T}e^{\nu T} -16\hat{Q}'(k)T\varepsilon^{-2}8k  e^{-4k^2 \varepsilon^{-2} T}e^{\nu T} 
  \\&\qquad+ \hat{Q}(k) [ (8T\varepsilon^{-2}k )^2    -T\varepsilon^{-2}8 ] e^{-4k^2 \varepsilon^{-2} T}e^{\nu T}. 
  \end{align*}

Now we use that on the support of $Q'$ we have $|k|\in[\delta,2\delta]$ and the bounds already used many times before, to derive
\[
\begin{split}
 |\hat{H}_T'(k)| & \leqslant   C|\hat{Q}'(k)|  e^{-4\delta^2 \varepsilon^{-2} T} + C|\hat{Q}(k)| T\varepsilon^{-2} k^2 e^{-4k^2 \varepsilon^{-2} T},\\
  |\hat{H}_T''(k)| & \leqslant  C|\hat{Q}''(k)|  e^{-4\delta^2 \varepsilon^{-2} T} +C|\hat{Q}'(k)| \varepsilon^{-2}T   e^{-4 \delta^2 \varepsilon^{-2} T} \\ &\qquad+ C |\hat{Q}(k)| T\varepsilon^{-2}(1+T\varepsilon^{-2}k^2) e^{-2k^2 \varepsilon^{-2} T}. 
  \end{split}
\] 
Thus we can write
\[
\begin{split}
 \|\hat{H}_T'\|^2_{L^2} 
 & \leqslant   C + C T^2\varepsilon^{-4}  \int_{\delta}^{\infty} k^2e^{-8k^2 \varepsilon^{-2} T}dk \\
  & \leqslant   C + C T^{1/2}\varepsilon^{-1}   \int_{\delta T^{1/2} \varepsilon^{-1} }^{\infty} k^2e^{-8k^2}dk  \leqslant  C,
  \end{split}\]
  and similarly
  \[
  \|\hat{H}_T''(k)\|_{L^2} 
  \leqslant  C .
\] 
Using Corollary \ref{cor:new} we obtain
\[
\| (1-\mathcal{P}_1)(e^{\Delta_\nu T}D)(\varepsilon \cdot)\e_1\|_{C_\kappa^0} \leqslant  C \varepsilon^\alpha  \|D\|_{ C_\kappa^{0,\alpha}},
\]
which concludes the proof.
%

\section{Approximation}
\label{sec:app}

In this section we present the proof our main approximation result using the bound on the residual derived in the sections above.
As the result should hold for very long times of order $\varepsilon^{-2}$ we need to rely on the sign of the cubic nonlinearity 
and energy type estimates. But as the Swift-Hohenberg operator does not allow for straightforward $L^p$-estimates, 
we have to restrict the final result to $L^2$-spaces. 

Let us recall the main setting:  $A$ is a mild solution of the amplitude equation~\eqref{e:GL}
and assume that there is a  $\varrho>2$ such that for all $p>0$ one has $A(0)\in W^{1,p}_{\varrho}$,
and $u$ is a solution of the Swift-Hohenberg equation~\eqref{e:SH}.

In order to prove our main result,
we need to bound the error
\[
 R(t) =u(t)-u_A(t) 
\]
between $u$ and the approximation $u_A$ defined in \eqref{def:uA}. Using the definition 
of the residual from \eqref{e:residual} and the mild formulation for the Swift-Hohenberg equation, we obtain
\begin{equation}
 \label{e:derR}
 R(t) = \e^{t\cL_\nu}R(0)  + \int_0^t \e^{(t-s)\cL_\nu} [u_A^3-(u_A+R)^3] ds +  \text{Res}(u_A)(t)\;.
\end{equation}
As the residual $\text{Res}$ is not differentiable in time, 
we cannot proceed with $L^2$-energy estimates as in the deterministic case, but the proof is still very similar.

Substituting $D = R - \text{Res}$,  we obtain first (note that $\text{Res}(0)=0$)
\[
D(t) =  \e^{t\cL_\nu}R(0)  + \int_0^t \e^{(t-s)\cL_\nu} [u_A^3-(D+u_A+\text{Res})^3] ds
\]
and thus
\[
\partial_t D = \cL_\nu D  -(D+u_A+\text{Res}(u_A))^3 - u_A^3 \;.
\]
Now we can use $L^2_{\varrho,\varepsilon}$-energy estimates  
\[
 \frac12\partial_t \| D\|^2_{L^2_{\varrho,\varepsilon}} 
=  \langle \cL_\nu D, D \rangle_{L^2_{\varrho,\varepsilon}} 
- \int_{\R}{ w_{\varrho,\varepsilon}} D[ (D+u_A+\text{Res}(u_A))^3 - u_A^3] \;dx  \;.
\]
We choose the weight (see Definition \ref{def:weight}) for some $\varrho>1$ as
\[
 w_{\varrho,\varepsilon}(x) := \frac1{(1+|\varepsilon x|^2)^{\varrho/2}},
\]
which is integrable with $\| w_{\varrho,\varepsilon}\|_{L^1}=C\varepsilon^{-1}$. 
Also recall that by Lemma \ref{lem:spec}
\[\langle \cL_\nu D, D \rangle_{L^2_{\varrho_\varepsilon}} \leqslant  C \varepsilon^2  \| D\|^2_{L^2_{\varrho,\varepsilon}} \;.
\]
For the nonlinearity we use a straightforward modification of the standard dissipativity result for the cubic in $L^2$-spaces, 
which states that 
\[ \langle (-(D+u_A)^3+D^3, D \rangle_{L^2_\rho} \leq  0 \;.
\]
But here we have the additional term $\text{Res}(u_A)$ that we need to take care of.
Using Young's inequality several times, we obtain
\begin{multline*}
 - [ (D+u_A+\text{Res}(u_A))^3 - u_A^3] \cdot  D \\
 \begin{aligned}
 	= {} & - [  (D+\text{Res})^3+ 3 u_A(D+\text{Res})^2+ 3u_A^2(D+\text{Res})  ] \cdot  D \\
 	= {} & -  D^4 -  3 D^2 \text{Res}^2   - 3u_A^2D^2   \\
 	& - 3 D^3 \text{Res}  - 3 D^3 u_A  -6 D^2u_A\text{Res} 
 	-   D \text{Res}^3- 3D u_A^2 \text{Res}  - 3 D u_A  \text{Res}^2  \\
 	\leqslant  {}&   C  u_A^2 \text{Res}^2   + C \text{Res}^4 \;.
 \end{aligned}
\end{multline*}
The critical terms in the estimates above are:
\[
6 D^2u_A\text{Res} \leqslant  \delta D^2u_A^2 + \delta D^4 + C_\delta \text{Res}^4 
\]
and with $\delta=8/15$ we have
\[
3D^3u_A \leqslant   \frac32  \delta  D^4  + \frac3{2\delta}  D^2u_A^2 = \frac45 D^4  + \frac{45}{16}  D^2u_A^2\;.
\]
In summary we obtain
\[
\partial_t \| D\|^2_{L^2_{\varrho,\varepsilon}} 
\leqslant  
C \varepsilon^2  \| D\|^2_{L^2_{\varrho,\varepsilon}} 
+  C \| u_A\|^2_{L^4_{\varrho,\varepsilon}} \|\text{Res}\|^2_{L^4_{\varrho,\varepsilon}}   
+ C \|\text{Res}\|^4_{L^4_{\varrho,\varepsilon}} \;.
\]
Thus by Gronwall's inequality or comparison principle for ODEs, we obtain directly
\[
\| D(t) \|^2_{L^2_{\varrho,\varepsilon}} 
\leqslant  
e^{C t\varepsilon^2} \| R(0) \|^2_{L^2_{\varrho,\varepsilon}} 
+ \int_0^t e^{C (t-s)\varepsilon^2} \|\text{Res}\|^2_{L^4_{\varrho,\varepsilon}}  (\|\text{Res}\|^2_{L^4_{\varrho,\varepsilon}} + \| u_A\|^2_{L^4_{\varrho,\varepsilon}}) ds
\]
and finally we established the following result.
\begin{lemma}
Let $A$ and $u$ be given as in the beginning of the section.
For the error $R$ given in \eqref{e:derR} we obtain  
\[ \sup_{[0,T_0\varepsilon^{-2}]} \| R-\text{\rm Res}\|_{L^2_{\varrho,\varepsilon}} 
\leqslant  C \| R(0) \|_{L^2_{\varrho,\varepsilon}} 
+   C\varepsilon^{-1} \sup_{[0,T_0\varepsilon^{-2}]} \Big[ \|\text{\rm Res}\|_{L^4_{\varrho,\varepsilon}}  (\|\text{\rm Res}\|_{L^4_{\varrho,\varepsilon}} + \| u_A\|_{L^4_{\varrho,\varepsilon}})   \Big]\;.
\]
\end{lemma}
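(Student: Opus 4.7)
The plan is to work with the modified error $D := R - \text{Res}(u_A)$ rather than $R$ itself, since $\text{Res}$ has no temporal regularity to support a direct energy estimate on $R$. Under this substitution the mild equation \eqref{e:derR} turns into the random PDE $\partial_t D = \cL_\nu D - [(D+u_A+\text{Res}(u_A))^3 - u_A^3]$ with $D(0) = R(0)$, which is classical enough to test against $w_{\varrho,\varepsilon}D$ and produce an inequality for $\tfrac12 \partial_t \|D\|^2_{L^2_{\varrho,\varepsilon}}$.

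For the linear term I would invoke Lemma \ref{lem:spec} applied to the \emph{scaled} weight $w_{\varrho,\varepsilon}$, for which the constants $C_n$ in \eqref{eq:propweight} are of order $\varepsilon^n$ --- this is precisely the point of the remark following the lemma. After discarding the nonpositive $-\|D''\|^2$ contribution and folding in the $\nu\varepsilon^2$ shift from $\cL_\nu$ one obtains $\langle \cL_\nu D, D\rangle_{L^2_{\varrho,\varepsilon}} \leqslant C\varepsilon^2 \|D\|^2_{L^2_{\varrho,\varepsilon}}$, and this $\varepsilon^2$-smallness is what ultimately allows the estimate to survive on $[0,T_0\varepsilon^{-2}]$. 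For the nonlinear contribution I would expand $(D+u_A+\text{Res})^3 - u_A^3$ into its monomials and retain $-D^4 - 3u_A^2 D^2$ as a negative reservoir. Each cross term (such as $3D^3 u_A$, $6D^2 u_A \text{Res}$, $3Du_A^2\text{Res}$, $3Du_A\text{Res}^2$, $3D^3\text{Res}$, $3D^2\text{Res}^2$, $D\text{Res}^3$) is split using Young's inequality with carefully chosen parameters, so that the $D^4$ and $D^2 u_A^2$ pieces generated can be absorbed into the reservoir while only nonnegative multiples of $u_A^2\text{Res}^2$ and $\text{Res}^4$ survive on the right. A final application of H\"older's inequality in space (with weight $w_{\varrho,\varepsilon}$) converts these source terms into $\|u_A\|^2_{L^4_{\varrho,\varepsilon}} \|\text{Res}\|^2_{L^4_{\varrho,\varepsilon}}$ and $\|\text{Res}\|^4_{L^4_{\varrho,\varepsilon}}$ and yields
\begin{equation*}
\partial_t \|D\|^2_{L^2_{\varrho,\varepsilon}} \leqslant C\varepsilon^2 \|D\|^2_{L^2_{\varrho,\varepsilon}} + C\,\|\text{Res}\|^2_{L^4_{\varrho,\varepsilon}}\bigl(\|u_A\|^2_{L^4_{\varrho,\varepsilon}} + \|\text{Res}\|^2_{L^4_{\varrho,\varepsilon}}\bigr).
\end{equation*}

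Gronwall's inequality then gives $\|D(t)\|^2_{L^2_{\varrho,\varepsilon}} \leqslant e^{C\varepsilon^2 t}\|R(0)\|^2_{L^2_{\varrho,\varepsilon}} + \int_0^t e^{C\varepsilon^2 (t-s)}(\text{source})(s)\,ds$. For $t \leqslant T_0\varepsilon^{-2}$ the exponential factor is controlled by $e^{CT_0}$, and a crude estimate of the integral by the time horizon times the supremum of the source contributes a factor of $\varepsilon^{-2}$. Taking square roots converts this into the $\varepsilon^{-1}$ prefactor claimed in the lemma, and the bound $\sqrt{a^2+b^2}\leqslant a+b$ produces the factorized form $\|\text{Res}\|_{L^4_{\varrho,\varepsilon}}(\|\text{Res}\|_{L^4_{\varrho,\varepsilon}}+\|u_A\|_{L^4_{\varrho,\varepsilon}})$ inside the supremum. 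The step I expect to be the most delicate is the tuning of the Young's-inequality constants in the cubic expansion: the reservoir $-3u_A^2 D^2$ has to absorb not only genuine $u_A^2 D^2$ terms but also the $D^2 u_A^2$-piece produced from splitting $3D^3 u_A$, which forces a sharp balance (with choices such as $\delta=8/15$) that leaves both $D^4$ and $D^2 u_A^2$ with strictly positive residual coefficients after every mixed term has been dispatched.
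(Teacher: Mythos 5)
Your proposal is correct and follows essentially the same route as the paper: the substitution $D=R-\text{Res}(u_A)$ to obtain a differentiable equation, Lemma \ref{lem:spec} with the $\varepsilon$-scaled weight to get the $C\varepsilon^2\|D\|^2_{L^2_{\varrho,\varepsilon}}$ bound on the quadratic form, the cubic expansion with $-D^4-3u_A^2D^2$ as the absorbing reservoir (including the sharp Young balance with $\delta=8/15$ for the $3D^3u_A$ term), and Gronwall over the $\varepsilon^{-2}$ time horizon followed by a square root to produce the $\varepsilon^{-1}$ prefactor and the factorized form of the source. No gaps.
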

By assumption, $A(0)\in W^{1,p}_\varrho$ for all $p>0$, so by Corollary \ref{cor:maxregA} we have
\[
	\mathbb{E}\sup_{[0,T]}\|A\|^p_{C^0_\kappa}\leqslant  C_p\qquad \forall p>1, \forall \kappa>0
\]
where $p$ is ``large'' and $\kappa$ is ``small''.

Then, by the Chebychev inequality, we have, in the sense of Definition~\ref{def:0},
\[
	\sup_{[0,T]}\|A\|_{C^0_\kappa}=\mathcal{O}(\varepsilon^{-\delta}), \qquad \forall \delta>0,
\]
and thus
\[
	\sup_{[0,T_0\varepsilon^{-2}]}\|u_A\|_{C^0_{\varrho,\varepsilon}}=\mathcal{O}(\varepsilon^{1-\delta}).
\]

Note that due to the $\varepsilon$ scaling in the weight we have 
\begin{lemma}
 Let $A(0)\in W^{1,p}_\varrho$ for all $p>0$. Then for all $p>1$, $\varrho>1$ and $\delta>0$ 
\[\sup_{[0,T_0\varepsilon^{-2}]}\|u_A\|_{L^p_{\varrho,\varepsilon}} = \mathcal{O} ( \varepsilon^{1 - 1/p-\delta} ).
\]
\end{lemma}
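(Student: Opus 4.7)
The plan is to exploit the scaling hidden in the definition of $u_A$ together with the high-moment regularity for $A$ from Corollary~\ref{cor:maxregA}. The weighted norm will convert cleanly under the substitution $y=\varepsilon x$, producing the $\varepsilon^{1-1/p}$ factor, and the residual $\varepsilon^{-\delta}$ will come from a standard Chebychev argument.

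First I would write $|u_A(t,x)|\leqslant 2\varepsilon|A(\varepsilon^2 t,\varepsilon x)|$ from the definition~\eqref{def:uA} and compute
\[
\|u_A(t)\|_{L^p_{\varrho,\varepsilon}}^p
=\int_{\R}w_{\varrho,\varepsilon}(x)|u_A(t,x)|^p\,dx
\leqslant 2^p\varepsilon^p\int_{\R}\frac{|A(\varepsilon^2 t,\varepsilon x)|^p}{(1+\varepsilon^2 x^2)^{\varrho/2}}\,dx.
\]
The substitution $y=\varepsilon x$, $dy=\varepsilon\,dx$, then yields
\[
\|u_A(t)\|_{L^p_{\varrho,\varepsilon}}\leqslant 2\,\varepsilon^{1-1/p}\,\|A(\varepsilon^2 t)\|_{L^p_\varrho},
\]
so the scaling factor $\varepsilon^{1-1/p}$ is explained purely by unweighting the rescaled weight.

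Next I would convert the $L^p_\varrho$-norm of $A$ into a $C^0_\kappa$-norm, where we control all moments via Corollary~\ref{cor:maxregA}. Using the pointwise bound $|A(x)|\leqslant(1+x^2)^{\kappa/2}\|A\|_{C^0_\kappa}$,
\[
\|A\|_{L^p_\varrho}^p\leqslant \|A\|_{C^0_\kappa}^p\int_{\R}(1+x^2)^{(p\kappa-\varrho)/2}\,dx,
\]
and the integral is finite as soon as $\varrho-p\kappa>1$. Since $\varrho>1$ is fixed and $\kappa>0$ may be taken arbitrarily small, the condition is satisfied, giving $\|A\|_{L^p_\varrho}\leqslant C\|A\|_{C^0_\kappa}$.

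Combining the two steps and applying Corollary~\ref{cor:maxregA} gives $\mathbb{E}\bigl[\sup_{[0,T]}\|A\|_{L^p_\varrho}^q\bigr]\leqslant C_{p,q}$ for every $q>1$. A standard Chebychev argument then yields $\sup_{[0,T]}\|A\|_{L^p_\varrho}=\mathcal{O}(\varepsilon^{-\delta})$ in the sense of Definition~\ref{def:0} for every $\delta>0$ (by choosing $q$ large enough that $q\delta$ exceeds the required power). Substituting this into the scaling inequality concludes the proof. The only mild subtlety is checking that the embedding $C^0_\kappa\hookrightarrow L^p_\varrho$ is indeed available for the given pair $(p,\varrho)$, and this is automatic by taking $\kappa$ small; no estimate is actually delicate here, the argument is essentially a rescaling plus an application of the bounds already established.
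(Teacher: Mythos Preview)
Your proposal is correct and follows essentially the same approach as the paper: the scaling substitution $y=\varepsilon x$ gives the factor $\varepsilon^{1-1/p}$, and the remaining $\|A\|_{L^p_\varrho}$ is controlled (via the embedding $C^0_\kappa\hookrightarrow L^p_\varrho$ and the moment bounds of Corollary~\ref{cor:maxregA}) by $\mathcal{O}(\varepsilon^{-\delta})$ through Chebychev. The paper's own proof is terser---it simply records the scaling identity and asserts $\|A\|_{L^p_\varrho}=\mathcal{O}(1)$ (relying on the Chebychev argument stated just above the lemma)---but your version spells out the embedding condition $\varrho-p\kappa>1$ and the Chebychev step explicitly, which is a welcome clarification rather than a different argument.
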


\begin{proof}
	The claim follows from the simple scaling argument below, which is based on a substitution:
	\begin{equation*}
		\|u_A\|_{L^p_{\varrho,\varepsilon}} \leqslant \varepsilon \|A\|_{L^p_{\varrho,\varepsilon}}  =\varepsilon^{1-1/p}\|A\|_{L^p_{\varrho}},
	\end{equation*}
	and we can conclude by noting that $\|A\|_{L^p_{\varrho}}=\mathcal{O}(1)$, with the meaning given in Definition~\ref{def:0}.
\end{proof}
By the result on the residual in Theorem~\ref{thm:res} we have, for all small $\kappa>0$,
\[
	\sup_{[0,T_0\varepsilon^{-2}]}\|\text{Res}(u_A)\|_{C^0_\kappa}=\mathcal{O}(\varepsilon^{3/2-2\kappa}),
\]
thus
\[
	\sup_{[0,T_0\varepsilon^{-2}]}\|\text{Res}(u_A)\|_{L^p_{\varrho,\varepsilon}}=\mathcal{O}(\varepsilon^{3/2-1/p-2\kappa}).
\]

In conclusion,
\[
	\begin{split}
	\sup_{[0,T_0\varepsilon^{-2}]}\|R\|_{L^2_{\varrho,\varepsilon}} 
	& \leqslant  \sup_{[0,T_0\varepsilon^{-2}]}\|R-\text{Res}(u_A)\|_{L^2_{\varrho,\varepsilon}}
	+\sup_{[0,T_0\varepsilon^{-2}]}\|\text{Res}(u_A)\|_{L^2_{\varrho,\varepsilon}}\\
	&\leqslant  C\|R(0)\|_{L^2_{\varrho,\varepsilon}} + \mathcal{O}(\varepsilon^{1-\delta-2\kappa}),
	\end{split}
\]
where we used once more Definition~\ref{def:0} for the $\mathcal{O}(\varepsilon^{1-\delta-\kappa})$ term.
Thus we finished the estimate on the error. Setting $2\kappa=\delta$, we established:
\begin{theorem}\label{thm:final} Let 
	 $A$ be a solution of the amplitude equation \eqref{e:GL}
	 on $[0,T_0]$ such that there is a $\varrho>2$ so that $A(0)\in W^{1,p}_\varrho$ for all $p>1$. 
	 Let $u$ be the solution to the Swift Hohenberg equation~\eqref{e:SH} and $u_A$ the approximation built through $A$, which is defined in \eqref{def:uA}.
	 
	Then for all $\delta>0$, $q>0$ there exists a constant $C_{q,\delta}$ such that
	\[
		\mathbb{P}(\sup_{[0,T_0\varepsilon^{-2}]}\|u-u_A\|_{L^2_{\varrho,\varepsilon}}\leqslant  C\|u(0)-u_A(0)\|_{L^2_{\varrho,\varepsilon}} + C\varepsilon^{1-2\delta})\geqslant 1-C_{q,\delta}\varepsilon^q,
	\]
	where the weight $ w_{\varrho,\varepsilon}(x) = (1+|\varepsilon x|^2)^{-\varrho/2}$ 
	(see Definition \ref{def:weight}) for some $\varrho>1$.
\end{theorem}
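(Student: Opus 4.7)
The plan is to control the error $R = u - u_A$ by combining the residual bound from Theorem \ref{thm:res} with an $L^2_{\varrho,\varepsilon}$-energy estimate for the equation satisfied by $D := R - \text{Res}(u_A)$. First I would subtract the mild formulation of \eqref{e:SH} from the definition \eqref{e:residual} of the residual to obtain
\[
R(t) = \e^{t\cL_\nu}R(0) + \int_0^t \e^{(t-s)\cL_\nu}[u_A^3 - (u_A+R)^3]\,ds + \text{Res}(u_A)(t),
\]
so that $D$ is differentiable in time (since $\text{Res}$ enters only through $u_A$, not as a forcing of a parabolic equation) and satisfies the random PDE
\[
\partial_t D = \cL_\nu D - (D + u_A + \text{Res}(u_A))^3 + u_A^3, \qquad D(0) = R(0).
\]
This detour through $D$ is forced by the fact that $\text{Res}$ is not regular enough in time for a direct energy estimate on $R$.

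Next I would test this equation against $w_{\varrho,\varepsilon} D$ and integrate. The linear term is handled by Lemma \ref{lem:spec} applied to the rescaled weight $w_{\varrho,\varepsilon}$: the constants $C_n$ in \eqref{eq:propweight} now carry an extra factor $\varepsilon^n$, so $C_2 = \mathcal{O}(\varepsilon^2)$ and hence $\langle \cL_\nu D, D\rangle_{L^2_{\varrho,\varepsilon}} \leqslant C\varepsilon^2 \|D\|^2_{L^2_{\varrho,\varepsilon}}$. For the cubic part I would expand $(D + u_A + \text{Res})^3 - u_A^3$ and exploit the dissipativity of $-D^4$: every cross term involving $D$ with $u_A$ or $\text{Res}$ is absorbed by Young's inequality into a (small) multiple of $D^4$ plus harmless powers of $u_A$ and $\text{Res}$. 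The resulting scalar inequality
\[
\partial_t \|D\|^2_{L^2_{\varrho,\varepsilon}} \leqslant C\varepsilon^2 \|D\|^2_{L^2_{\varrho,\varepsilon}} + C \|u_A\|^2_{L^4_{\varrho,\varepsilon}}\|\text{Res}\|^2_{L^4_{\varrho,\varepsilon}} + C\|\text{Res}\|^4_{L^4_{\varrho,\varepsilon}}
\]
is integrated via Gronwall on $[0, T_0\varepsilon^{-2}]$, and the exponential $e^{C T_0}$ remains bounded precisely because the growth rate is $\mathcal{O}(\varepsilon^2)$ on a time interval of length $\mathcal{O}(\varepsilon^{-2})$.

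Finally I would insert the pathwise size bounds. By the scaling argument already carried out in the text, $\|u_A\|_{L^4_{\varrho,\varepsilon}} = \mathcal{O}(\varepsilon^{3/4-\delta})$ follows from Corollary \ref{cor:maxregA} plus Chebyshev, while Theorem \ref{thm:res} and a further rescaling give $\|\text{Res}(u_A)\|_{L^4_{\varrho,\varepsilon}} = \mathcal{O}(\varepsilon^{5/4 - 2\kappa})$. Combining with the factor $\varepsilon^{-1}$ coming from $\|w_{\varrho,\varepsilon}\|_{L^1}$ after Gronwall gives $\|D\| = \mathcal{O}(\varepsilon^{1-2\delta})$ with high probability, and the triangle inequality $\|R\| \leqslant \|D\| + \|\text{Res}\|_{L^2_{\varrho,\varepsilon}}$ yields the conclusion. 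The passage to a high-probability statement with arbitrary polynomial decay $\varepsilon^q$ is standard given the moment bounds from Sections~\ref{sec:reg} and~\ref{sec:res}.

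The main obstacle I expect is the bookkeeping of the Young parameters in the cubic expansion: the cross terms $3D^3 u_A$ and $6D^2 u_A \text{Res}$ must be split so that the coefficient of $-D^4$ remains strictly negative, otherwise the dissipative mechanism is lost and Gronwall would break on the long time scale. A secondary subtlety is that the energy estimate is formal for mere weak solutions, so one should in principle perform it on the Galerkin / periodic approximations from Section \ref{sec:exuni} and pass to the limit, exactly as already done for the existence and uniqueness results.
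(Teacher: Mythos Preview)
Your proposal is correct and follows essentially the same route as the paper: subtract the residual to obtain the random PDE for $D$, apply Lemma~\ref{lem:spec} with the $\varepsilon$-scaled weight to get the $C\varepsilon^2$ growth rate, absorb the cubic cross terms (the paper singles out exactly the two you flag, $3D^3u_A$ and $6D^2u_A\,\text{Res}$) via Young, apply Gronwall on $[0,T_0\varepsilon^{-2}]$, and insert the $L^4_{\varrho,\varepsilon}$ bounds on $u_A$ and $\text{Res}$. One small correction: the factor $\varepsilon^{-1}$ in the final estimate does not come from $\|w_{\varrho,\varepsilon}\|_{L^1}$ but from the time integral of length $T_0\varepsilon^{-2}$ after taking the square root; the outcome is of course the same.
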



\paragraph{Acknowledgments}
L.A.B.~and D.B.~were supported by DFG-funding BL535-9/2 ``Mehrskalenanalyse stochastischer partieller Differentialgleichungen (SPDEs)'', 
and would also like to thank the M.O.P.S.~program for providing a continuous support during the development of this research.

This is a pre-print of an article published in Communications in Mathematical Physics. The final authenticated version is available online at:\\ \url{https://doi.org/10.1007/s00220-019-03573-7}


\bibliographystyle{abbrv}
\bibliography{TBS1bloemker}

\end{document}